\documentclass[11pt]{amsart}

\usepackage[latin1]{inputenc}
\usepackage{amsmath,amssymb,amsthm}
\usepackage{t1enc}

\usepackage[english]{}
\usepackage{graphicx}
\usepackage{epsfig}
\usepackage{graphicx}
\usepackage{multirow}
\usepackage{fancyhdr}
\usepackage{subfigure}
\usepackage{color}

\renewcommand{\leq}{\leqslant}

\def\N{\textrm{I\kern-0.21emN}}

\def\R{\textrm{I\kern-0.21emR}}
\def\Q{\textrm{l\kern-0.5emQ}}

\def\Z{\mathbb{Z}}

\newcommand{\ds}{\displaystyle}

\newcommand{\dive}{\mathop{\rm div}\nolimits}

\newcommand{\vsd}{\medbreak}
\newcommand{\dis}{\displaystyle}
\newcommand{\T}{\mathbb{T}}
\def\eqdefa{\buildrel\hbox{{\rm \footnotesize def}}\over =}

\topmargin -1.5cm
\textheight 23.5cm
\oddsidemargin 0cm
\evensidemargin 0cm
\textwidth 17cm


\newtheorem{theorem}{Theorem}[section]
\newtheorem{cor}[theorem]{Corollary}
\newtheorem{prop}[theorem]{Proposition}
\newtheorem{lemma}[theorem]{Lemma}

\theoremstyle{remark}
\theoremstyle{proof}
\newtheorem{remark}{Remark}[section]

\theoremstyle{definition}

\theoremstyle{notation}

\theoremstyle{definition}

%
%
%
\title{Wellposedness for density-dependent incompressible viscous fluids on the torus $\T^3$}
\author{Eugénie Poulon}
\date\today

\begin{document}

\synctex=1
\newcommand{\nfont}{\fontshape{n}\selectfont}
\noindent \address{{\nfont{Eug\'enie Poulon}} -  Laboratoire Jacques-Louis Lions - UMR 7598, Universit\'e Pierre et Marie Curie, Boite courrier 187, 4 place Jussieu, 75252 Paris Cedex 05, France}
\email{poulon@ann.jussieu.fr} 

\begin{abstract}
\noindent We investigate the local wellposedness of incompressible inhomogeneous Navier-Stokes equations on the Torus $\T^3$, with initial data in the critical Besov spaces. Under some smallness assumption on the velocity in the critical space $B^{\frac{1}{2}}_{2,1}(\T^3)$, the global-in-time existence of the solution is proved. The initial density is required to belong to~$B^{\frac{3}{2}}_{2,1}(\T^3)$ but not supposed to be small.
\end{abstract}
\maketitle
\section{Introduction and mains statements}
\noindent Incompressible flows are often modeled by the incompressible homogeneous Navier-Stokes system (\ref{NSH}), e.g the density of the fluid is supposed to be a constant
\begin{equation}
\label{NSH} \left \lbrace \begin {array}{ccc}  \partial_{t}v + v\cdot\nabla{v}-\Delta{v}&=&-\nabla{p}\\ \dive v&=&0\\
 v_{|t=0}&=&v_{0}.\\ \end{array}
\right.
\end{equation}
\noindent However, this model is sometimes far away from the physical situation. Concerning models of blood and rivers, even if the fluid is incompressible, its density can not be considered constant, owing to the complexity of the structure of the flow. As a result, a model which takes into account such constraints, has to be considered. That is the so-called Inhomogeneous Navier-Stokes system, given by 
\begin{equation} \label{NSIH1} \left\lbrace \begin {array}{ccc}  \partial_{t} \rho\, + \dive(\rho u) &=& 0\\ \partial_{t}(\rho u) + \dive(\rho u \otimes u) - 
\Delta{u} + \nabla{\Pi}&=&0\\
 \dive u&=&0\\
 (\rho,u)_{|t=0}&=&(\rho_{0},u_{0}).\\ \end{array}
\right.
\end{equation}
which is equivalent to the system below, by vertue of the transport equation
\begin{equation}
\label{NSIH} \left\lbrace \begin {array}{ccc}  \partial_{t} \rho\, + u\cdotp\nabla{\rho} &=& 0\\ \rho(\partial_{t} u + u\cdot\nabla{u})  - \Delta{u} + \nabla{\Pi}&=&0\\
 \dive u&=&0\\
 (\rho,u)_{|t=0}&=&(\rho_{0},u_{0}),\\ \end{array}
\right.
\end{equation}
where $\rho=\rho(t,x) \in \R^{+}$ stands for the density and $u=u(t,x) \in \T^3$ for the velocity field. The term $\nabla \Pi$ (namely the gradient of the pressure) may be seen as the Lagrangian multiplier associated with the constraint $\dive u=0$. The initial data ($\rho_{0},u_{0}$) are prescribed. Notice, we choose the viscosity of the fluid equal to~$1$, in a sake of simplicity.\\ \\
\noindent Let us recall some well-known results about the two above systems (homogeneous versus inhomogeneous). In the homogeneous case, the celebrated theorem of J. Leray \cite{JLbis} proves the global existence of weak solutions with finite energy in any space dimension. The uniqueness is garanteed in dimension $2$, whereas in dimension~$3$, this is still an open question. In deal with this issue, H. Fujita and T. Kato \cite{FKbis} built some global strong solutions in the context of scaling invariance spaces, namely spaces which have the same scaling as the system (\ref{NSH}). Such spaces are said to be critical, in the sense that their norm is invariant for any~$\lambda >0$ under the transformation 
$$ v_{0}(x) \mapsto \lambda\,v_{0}(\lambda x) \quad \hbox{and} \quad v(t,x) \mapsto \lambda\,v(\lambda^2 t,\lambda x).$$ The point is that such solutions are unique in this framework. In the inhomogeneous case, Leray's approach is still relevant for the system (\ref{NSIH1}). Indeed, if the initial density~$\rho_{0}$ is non negative and belongs to~$L^\infty$ and if $\sqrt{\rho_{0}}\, u_{0}$ belongs to~$L^2$, then there exists some global weak solutions $(\rho,u)$ with finite energy. However, the question of uniqueness has not been solved, even in dimension $2$. We refer the reader to the paper of A. Kazhikhov \cite{Kaz}, J. Simon \cite{S} for the existence of global weak solutions. The unique resolvability of (\ref{NSIH1}) is first established by the works of O. Ladyzenskaja and V. Solonnikov \cite{LS} in the case of a bounded domain~$\Omega$ with homogeneous Dirichlet condition for the velocity $u$. As one has already mentionned previously, the approach initiated by H. Fujita and T. Kato is particulary efficient in the scaling invariance framework to face the uniqueness problem. A natural question is to wonder if such an approach is relevant for incompressible inhomogeneous fluids. If one believes so, scaling considerations should help us to find an adaptated functional framework. Firstly, one can check that (\ref{NSIH}) is invariant under the scaling transformation (for any $\lambda>0$) 
$$ (\rho_{0},u_{0})(x) \mapsto (\rho_{0},\lambda\,u_{0})(\lambda x) \quad \hbox{and} \quad (\rho,u,\Pi)(t,x) \mapsto (\rho,\,\lambda\,u,\, \lambda^2\,\Pi)(\lambda^2 t,\lambda x).$$
\noindent That is an easy exercice to check that $\ds{\dot{B}^{\frac{3}{2}}_{2,1}(\R^3) \times \dot{B}^{\frac{1}{2}}_{2,1}(\R^3)}$ is scaling invariant under this transformation, in dimension $3$, e.g
$$ \| \rho_{0}(\lambda x) \|_{\dot{B}^{\frac{3}{2}}_{2,1}(\R^3)} =  \|  \rho_{0}\|_{\dot{B}^{\frac{3}{2}}_{2,1}(\R^3)} \quad \hbox{and} \quad \|  \lambda u_{0}(\lambda x) \|_{\dot{B}^{\frac{1}{2}}_{2,1}(\R^3)} =\|   u_{0} \|_{\dot{B}^{\frac{1}{2}}_{2,1}(\R^3)}.$$ 
Secondly, as the system (\ref{NSIH}) degenerates if $\rho$ vanishes or becomes unbounded, we further assume that the density is away from zero ($\rho^{\pm1}_{0} \in L^{\infty}$). Denoting 
$$ \frac{1}{\rho_{0}} \eqdefa 1 +  a_{0} \quad \hbox{and} \quad \frac{1}{\rho} \eqdefa 1 +a,$$
the incompressible inhomogeneous Navier-Stokes system (\ref{NSIH}) can be rewritten as     
\begin{equation}
\label{NSIH avec a} \left \lbrace \begin {array}{ccc}  \partial_{t} a\, + u\cdotp\nabla{a} &=& 0\\ \partial_{t} u + u\cdot\nabla{u} +(1+a)\, ( \nabla{\Pi} - \Delta{u} )&=&0\\
 \dive u&=&0\\
 (a,u)_{|t=0}&=&(a_{0},u_{0}),\\ \end{array}
\right.
\end{equation}
\medbreak\noindent The question of unique solvability of the above system (\ref{NSIH avec a}) has been adressed by many authors. Let us highlight the work of R. Danchin \cite{Dan1}, who studied the unique solvability of (\ref{NSIH avec a}) with constant viscosity coefficient and in scaling invariant (e.g critical) Besov spaces in the whole space~$\R^N$. This generalized the celebrated results by H. Fujita and T. Kato, devoted to the classical homogeneous Navier-Stokes system (\ref{NSH}). Indeed, R. Danchin proved in \cite{Dan1} (under the assumption the density is close to a constant) a local well-posedness for large initial velocity and a global well-posedness for initial velocity small with respect to the viscosity. More precisely, he proved that if the initial data~$(a_{0},u_{0})$ belongs to $\dot{B}^{\frac{N}{2}}_{2,\infty}(\R^N)\,\cap\, L^{\infty}(\R^N) \times \dot{B}^{\frac{N}{2}-1}_{2,1}(\R^N)$, with~$a_{0}$ small enough in~$\dot{B}^{\frac{N}{2}}_{2,\infty}(\R^N)\cap L^{\infty}(\R^N)$, then the system (\ref{NSIH avec a}) has a unique local-in-time solution. In addition, assuming the velocity~$u_{0}$ is also small enough in the space~$\dot{B}^{\frac{N}{2}-1}_{2,1}(\R^N)$, the solution is global.   
\medbreak
\noindent Our main motivation in this paper is to investigate the local and global wellposedness of the incompressible inhomogeneous Navier-Stokes system, in the case of critical Besov spaces and on the torus~$\T^3$. The aim is to get rid of the smallness condition on the density, and just keeping the smallness one on the initial velocity. We point out that such a result has been already proved in the whole space~$\R^3$. We refer the reader to the paper \cite{AGZ2} of H. Abidi, G. Gui and P. Zhang. The main difference between their work and ours is that, on the torus, we have to be careful, owing to the average of the velocity~$u$, which is not preserved, contrary to the case of classical Navier-Stokes system (\ref{NSH}). As a consequence, a lot of "classical results" such as Gagliardo-Niremberg inequalities and Sobolev embeddings, have to take into account the average of the velocity $u$. We will collect them in section $2$. Let us give some remarks about this.\\
\noindent \textbf{Notation}
\noindent In the sequel, we shall denote by $$ \bar{m}\eqdefa \int_{\T^3} m(x)\, dx,  \quad \hbox{where} \quad | \T^3 |=1.$$
\begin{remark}
\label{argument 1}
It is clear that $\bar{\rho} = \bar{\rho_{0}}$. Indeed, an integration on the mass conservation equation combining with the fact $\dis{\int_{\T^3} u\cdotp\nabla{\rho} =0}$ gives
$$ \int_{\T^3} \rho(t,x)\, dx \, = \, \int_{\T^3} \rho_{0}(x)\, dx.$$
\end{remark}
\noindent Notice that by vertue of the divergence free condition on the velocity $u$, the average of any function of~$\rho$ is preserved. In particular, the average of~$a$ is conserved. \\
\begin{remark}
\label{argument 2}
An integration on the momentum equation of the system (\ref{NSIH1}) (the terms $\ds{\int_{\T^3}\dive(\rho u \otimes u)}$, $\dis{\int_{\T^3} 
\Delta{u}}$ and $\dis{\int_{\T^3} \nabla{\Pi}}$ are nul) implies
$$ \int_{\T^3} (\rho\,u) (t,x)\, dx = \int_{\T^3} \rho_{0}\,u_{0} (x)\, dx.$$ 
\end{remark}

\begin{remark}
\label{argument 3}
Notice that~$ \rho - \bar{\rho}$ is also solution of the transport equation. Thus, if we take the~$L^2$ inner product of this mass conservation equation with~$\rho - \bar{\rho}$ itself, we get the energy conservation of the quantity~$\| \rho - \bar{\rho}\|_{L^2}$, because of divergence-free condition of $u$. Therefore we have : $$\| \rho - \bar{\rho}\|_{L^2} = \| \rho_{0} - \bar{\rho_{0}}\|_{L^2}.$$ 
\end{remark}

\noindent In this paper, our main Theorem can be stated as follows 
\begin{theorem}[Main theorem]\,\,
\label{main theorem}
\sl{
Let $a_{0} \in B^{\frac{3}{2}}_{2,1}$, $u_{0} \in B^{\frac{1}{2}}_{2,1}$, such that 
\begin{equation}
\dive u_{0}=0 \quad \hbox{;} \quad 1 + a_{0} \geqslant b \quad \hbox{for some positive constant}  \,\, b \quad \hbox{and}  \quad \int_{\T^3} \frac{1}{1 + a_{0}(x)} \, u_{0}(x) \, dx =0.
\end{equation} Then there exists a positive time $T_{*}$ such that the system (\ref{NSIH avec a}) has a unique local-in-time solution : for any~$T <T_{*}$, $$(a,u,\Pi)\,\, \in \,\, \mathcal{C}([0,T],B^{\frac{3}{2}}_{2,1})\, \times\, \mathcal{C}([0,T],B^{\frac{1}{2}}_{2,1})\,\cap\, L^{1}([0,T],B^{\frac{5}{2}}_{2,1}) \times\, L^{1}([0,T],B^{\frac{1}{2}}_{2,1})\cdotp$$ \noindent In addition, there exists a constant $c$ (depending on $\ds{\| a_{0}\|_{B^{\frac{3}{2}}_{2,1}}}$) such that $$ \hbox{if} \quad \| u_{0}\|_{B^{\frac{1}{2}}_{2,1}} \leqslant c, \quad  \hbox{then} \quad  T_{*} = +\infty.$$
}\end{theorem}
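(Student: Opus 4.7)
The plan is to mimic the classical approach of Fujita--Kato, extended by Danchin to the inhomogeneous setting and further developed by Abidi--Gui--Zhang in the whole space, while paying attention to the specificities of the torus, namely that the mean value of the velocity is not preserved and that the zero mean condition $\int u_0/(1+a_0)=0$ is the right substitute (see Remark~\ref{argument 2}). Concretely, I would construct a sequence $(a^n,u^n,\Pi^n)$ by induction, setting $(a^0,u^0)=(a_0,u_0)$, and at each step defining $a^{n+1}$ as the solution of the transport equation driven by $u^n$ with datum $a_0$, and $(u^{n+1},\Pi^{n+1})$ as the solution of the linearized Stokes-like problem
\[
\partial_t u^{n+1}+u^n\cdot\nabla u^{n+1}-(1+a^{n+1})\Delta u^{n+1}+(1+a^{n+1})\nabla\Pi^{n+1}=0,\qquad \dive u^{n+1}=0,
\]
with datum $u_0$. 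A truncation of high frequencies (Friedrichs scheme) guarantees that each step is solvable.

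Uniform bounds on a small time interval $[0,T_{*}]$ rest on two ingredients. For the density, the Besov transport estimate yields
\[
\|a^{n+1}\|_{L^\infty_T(B^{3/2}_{2,1})}\leq \|a_0\|_{B^{3/2}_{2,1}}\exp\!\left(C\int_0^T \|\nabla u^n\|_{B^{3/2}_{2,1}}\,d\tau\right),
\]
together with conservation of $\overline{a^{n+1}}=\overline{a_0}$ and of the lower bound $1+a^{n+1}\geq b$. For the velocity, I would rewrite the linearized momentum equation as a constant-coefficient heat equation by moving the convective term and the perturbation $a^{n+1}(-\Delta u^{n+1}+\nabla\Pi^{n+1})$ to the right-hand side, and then invoke maximal $L^1_T$-regularity of the Stokes operator to control $\|u^{n+1}\|_{L^\infty_T(B^{1/2}_{2,1})\cap L^1_T(B^{5/2}_{2,1})}$, using Bony's paraproduct calculus for the nonlinear interactions.

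The main obstacle is the term $a^{n+1}(-\Delta u^{n+1}+\nabla\Pi^{n+1})$: when $a_0$ is small it is absorbed by a fixed-point argument, but here $a_0$ can be large. To handle this I would choose $T_{*}$ so small that $\|a^{n+1}(t)-a_0\|_{B^{3/2}_{2,1}}$ is arbitrarily small on $[0,T_{*}]$ (a continuity argument, exploiting that this quantity vanishes at $t=0$ and is controlled through the transport equation), so that the variable part behaves as a small perturbation. The non-perturbative contribution involving $a_0$ itself is handled by proving maximal regularity for the Stokes operator with the fixed, bounded coefficient $1+a_0$; on the torus this step uses crucially the Poincar\'e inequality permitted by the compatibility condition $\int u/(1+a)=0$ propagated by the equation. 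A possible alternative, perhaps cleaner, is a Lagrangian change of variables in the spirit of Danchin--Mucha, which freezes the density as the time-independent function $a_0\circ X^{-1}$; maximal regularity for that fixed-coefficient Stokes system can then be proved once and for all, and the nonlinear perturbation introduced by the flow $X$ is small for short times.

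Once these uniform estimates are secured, convergence of $(a^n,u^n)$ to a limit $(a,u,\Pi)$ follows by showing that the differences $(a^{n+1}-a^n,u^{n+1}-u^n)$ are Cauchy in a weaker space (typically $L^\infty_T(B^{1/2}_{2,1})\times L^\infty_T(B^{-1/2}_{2,1})\cap L^1_T(B^{3/2}_{2,1})$), which absorbs the derivative loss in the transport term, after which a weak-$\ast$ compactness argument recovers the critical regularity. Uniqueness is obtained by the same kind of estimate applied to the difference of two solutions. For the global part, smallness of $\|u_0\|_{B^{1/2}_{2,1}}$ together with the compatibility condition and the Poincar\'e inequality ensure that $\int_0^T\|\nabla u\|_{B^{3/2}_{2,1}}\,d\tau$ remains small for all $T$, so the transport estimate keeps $\|a\|_{B^{3/2}_{2,1}}$ bounded, and a standard continuation argument yields $T_{*}=+\infty$.
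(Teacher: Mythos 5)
Your local-existence strategy (Friedrichs scheme, transport estimate for the density, maximal regularity for the Stokes operator with a frozen coefficient, contraction in a weaker norm) is in the same spirit as the paper's Theorem~\ref{theorem LWP}, and the care you take with the torus-specific compatibility condition $\int u_0/(1+a_0)=0$ is correct. The paper handles the large-density perturbation by freezing the low frequencies $S_m a$ rather than freezing $a_0$ as you suggest, but this is a cosmetic difference.

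The global part, however, has a genuine gap. You claim that the smallness of $\|u_0\|_{B^{1/2}_{2,1}}$ together with Poincar\'e ensures that $\int_0^T \|\nabla u\|_{B^{3/2}_{2,1}}\,d\tau$ stays small for all $T$, so that Gronwall keeps $\|a\|_{B^{3/2}_{2,1}}$ bounded and the solution can be continued. This is precisely the step that does not close when the density is large. Look at the velocity estimate \eqref{velocity bis} in Lemma~\ref{lemma general avec linfini}: the Gronwall coefficient involves $W(t')$, which contains powers of $2^m$ where $m$ must be taken large when $\|a_0\|_{B^{3/2}_{2,1}}$ is large (so that $\|a-S_m a\|_{B^{3/2}_{2,1}}$ is small). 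Hence the exponential factor in the velocity estimate is of order $e^{T\cdot W}$ with $W$ possibly huge, and no smallness of $\|u_0\|_{B^{1/2}_{2,1}}$ alone can keep the Besov norm of $u$ under control uniformly in $T$. A direct bootstrap in critical Besov spaces would require the density perturbation to be small, which is exactly the hypothesis the paper is trying to remove.

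The paper's workaround is structurally different. From the local theorem, smallness of $\|u_0\|_{B^{1/2}_{2,1}}$ implies $T^*\geq 1$ and, by interpolating $u\in L^\infty_T(B^{1/2}_{2,1})\cap L^1_T(B^{5/2}_{2,1})\hookrightarrow L^{4/3}_T(H^2)$, one picks a time $t_1<1$ with $u(t_1)\in H^2$ small. The global step is then carried out \emph{entirely in the $L^2$-based Sobolev energy framework} (Theorem~\ref{theorem1GWP}), which only requires the density to be bounded above and below, not small and not in a Besov space, and whose constants do not blow up with $\|a_0\|_{B^{3/2}_{2,1}}$. Only afterwards is the Besov regularity of the density propagated (Proposition~\ref{propagation de la regularity}) using the $H^2$-level bounds $\int_0^t\|\nabla w\|_{L^\infty}$ and $\int_0^t\|\nabla w\|_{B^{1/2}_{6,1}}$ supplied by the energy theorem, and the Besov regularity of the velocity is then recovered from Lemma~\ref{lemma general avec linfini}. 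You would need to incorporate some version of this detour through $H^2$-energy estimates; the direct Besov bootstrap you propose does not survive large density.
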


\medbreak
\noindent Our main Theorem \ref{main theorem} relies on two Theorems, given below. Indeed, we will face the question of local wellposedness and global wellposedness in a different way. The first one deals with the local wellposed issue: until a small time, we may control the velocity $u$ in some functional Besov spaces, by the initial data $u_{0}$. It can stated as follows 
\begin{theorem}[Local-wellposedness theorem]\,\,
\label{theorem LWP}
\sl{
Let $a_{0} \in B^{\frac{3}{2}}_{2,1}$, $u_{0} \in B^{\frac{1}{2}}_{2,1}$, such that 
\begin{equation}
\dive u_{0}=0 \quad \hbox{;} \quad 1 + a_{0} \geqslant b \quad \hbox{for some positive constant}  \,\, b.
\end{equation} Then there exists a positive time $T_{*}$ such that the system (\ref{NSIH avec a}) has a unique local-in-time solution : for any~$T <T_{*}$, $$(a,u,\Pi)\,\, \in \,\, \mathcal{C}([0,T],B^{\frac{3}{2}}_{2,1})\, \times\, \mathcal{C}([0,T],B^{\frac{1}{2}}_{2,1})\,\cap\, L^{1}([0,T],B^{\frac{5}{2}}_{2,1}) \times\, L^{1}([0,T],B^{\frac{1}{2}}_{2,1})\cdotp$$ \noindent In addition, there exists a small constant $c$ depending on $\ds{\| a_{0}\|_{B^{\frac{3}{2}}_{2,1}}}$ such that if $$ \| u_{0}\|_{B^{\frac{1}{2}}_{2,1}} \leqslant c, $$
therefore,  $T_{*} \geqslant 1$ and one has for any $T < T_{*}$,
\begin{equation}
\label{density}
\hbox{Density estimate:}\quad \|a \|_{L^{\infty}_{T}(B^{\frac{3}{2}}_{2,1})} \leqslant \|a_{0}\|_{B^{\frac{3}{2}}_{2,1}}\,\exp\Bigl(C\, \|u \|_{L^{1}_{T}(B^{\frac{5}{2}}_{2,1})}\Bigr).  
\end{equation}
\begin{equation}
\label{velocity}
\hbox{Velocity estimate:}\quad \|u \|_{L^{\infty}_{T}(B^{\frac{1}{2}}_{2,1})} + \|u \|_{L^{1}_{T}(B^{\frac{5}{2}}_{2,1})} + \|\nabla \Pi \|_{L^{1}_{T}(B^{\frac{1}{2}}_{2,1})} \leqslant C\, \|u_{0}\|_{B^{\frac{1}{2}}_{2,1}}.
\end{equation}   
}\end{theorem}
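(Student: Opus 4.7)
The plan is to solve~(\ref{NSIH avec a}) by a Picard-type iteration on linearizations. Starting from $(a^0,u^0):=(a_0,e^{t\Delta}u_0)$, I would define $(a^{n+1},u^{n+1},\Pi^{n+1})$ inductively by the coupled linear system
\begin{equation*}
\left\{\begin{array}{l}
\partial_t a^{n+1}+u^n\cdot\nabla a^{n+1}=0,\\
\partial_t u^{n+1}-\Delta u^{n+1}+\nabla\Pi^{n+1}=-u^n\cdot\nabla u^{n+1}-a^{n+1}(\nabla\Pi^{n+1}-\Delta u^{n+1}),\\
\dive u^{n+1}=0,\\
(a^{n+1},u^{n+1})|_{t=0}=(a_0,u_0).
\end{array}\right.
\end{equation*}
The transport step preserves the lower bound $1+a^{n+1}\geq b$ thanks to the divergence-free condition on $u^n$, which in turn ensures the ellipticity of the pressure equation obtained by taking the divergence of the momentum line,
$$\dive\bigl((1+a^{n+1})\nabla\Pi^{n+1}\bigr)=\dive\bigl((1+a^{n+1})\Delta u^{n+1}-u^n\cdot\nabla u^{n+1}\bigr),$$
uniquely solvable (with zero spatial mean on $\T^3$) by Lax--Milgram.

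For the a priori bounds, Danchin's transport estimate in the critical Besov space $B^{\frac{3}{2}}_{2,1}$ combined with Gronwall gives
$$\|a^{n+1}\|_{L^\infty_T(B^{\frac{3}{2}}_{2,1})}\leq\|a_0\|_{B^{\frac{3}{2}}_{2,1}}\exp\bigl(C\|u^n\|_{L^1_T(B^{\frac{5}{2}}_{2,1})}\bigr),$$
while maximal $L^1_T(B^{\frac{1}{2}}_{2,1})$-regularity for the heat semigroup, the product law $\|fg\|_{B^{\frac{1}{2}}_{2,1}}\lesssim\|f\|_{B^{\frac{3}{2}}_{2,1}}\|g\|_{B^{\frac{1}{2}}_{2,1}}$, and a variable-coefficient elliptic estimate for the pressure yield a bound of the shape
$$\|u^{n+1}\|_{L^\infty_T(B^{\frac{1}{2}}_{2,1})}+\|u^{n+1}\|_{L^1_T(B^{\frac{5}{2}}_{2,1})}+\|\nabla\Pi^{n+1}\|_{L^1_T(B^{\frac{1}{2}}_{2,1})}\leq C\|u_0\|_{B^{\frac{1}{2}}_{2,1}}+R_n,$$
with $R_n$ dominated essentially by $C\|a^{n+1}\|_{L^\infty_T(B^{\frac{3}{2}}_{2,1})}\bigl(\|u^{n+1}\|_{L^1_T(B^{\frac{5}{2}}_{2,1})}+\|\nabla\Pi^{n+1}\|_{L^1_T(B^{\frac{1}{2}}_{2,1})}\bigr)$ plus lower-order convection terms. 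On the torus one must separate the mean and zero-mean parts of $u^{n+1}$ (the mean is not preserved, cf.\ Remarks~\ref{argument 1}--\ref{argument 3}), the mean satisfying only a controllable ODE.

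The crux of the argument is to absorb the term $a^{n+1}(\nabla\Pi^{n+1}-\Delta u^{n+1})$ \emph{without} smallness of $a_0$. Following a Danchin--Mucha type splitting, I would write $a^{n+1}=S_N a_0+(a^{n+1}-S_N a_0)$, where $S_N$ is a low-frequency cut-off at a large threshold $N$. The high-frequency tail $(\operatorname{Id}-S_N)a_0$ is arbitrarily small in $B^{\frac{3}{2}}_{2,1}$ for $N$ large, and $a^{n+1}(t)-a_0$ is small in $B^{\frac{3}{2}}_{2,1}$ for short time, by integrating the transport equation in $t$. Both contributions are absorbed by the left-hand side. The remaining low-frequency smooth piece $S_N a_0\,(\nabla\Pi^{n+1}-\Delta u^{n+1})$ is controlled by Bernstein's inequality with a bound $C(N,\|a_0\|_{B^{\frac{3}{2}}_{2,1}})\,T\,(\|u^{n+1}\|_{L^\infty_T(B^{\frac{1}{2}}_{2,1})}+\ldots)$; the extra factor $T$ makes it absorbable provided $T\leq T_*$ is small enough, with $T_*$ depending on $\|a_0\|_{B^{\frac{3}{2}}_{2,1}}$ and $\|u_0\|_{B^{\frac{1}{2}}_{2,1}}$. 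This is the main technical obstacle.

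Uniform bounds on $[0,T_*]$ in hand, convergence of the scheme follows from a contraction argument on $(\delta a^n,\delta u^n):=(a^{n+1}-a^n,u^{n+1}-u^n)$: these satisfy linearized equations driven by $\delta u^{n-1}$, and can be controlled (with the usual one-derivative loss avoiding composition issues with $1/(1+a)$) in $L^\infty_{T_*}(B^{\frac{1}{2}}_{2,1})\times\bigl[\mathcal{C}([0,T_*],B^{-\frac{1}{2}}_{2,1})\cap L^1_{T_*}(B^{\frac{3}{2}}_{2,1})\bigr]$. The limit provides a solution with the stated regularity, and uniqueness is obtained by the same estimate on the difference of two solutions. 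Finally, for the small-velocity case, rerunning the a priori estimates up to time $T=1$ closes as a bootstrap whenever $\|u_0\|_{B^{\frac{1}{2}}_{2,1}}\leq c(\|a_0\|_{B^{\frac{3}{2}}_{2,1}})$, yielding $T_*\geq 1$ together with the advertised estimates~(\ref{density}) and~(\ref{velocity}).
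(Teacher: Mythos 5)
Your overall strategy---Picard iteration on a linearized system, low/high frequency splitting of the density to avoid a smallness assumption on~$a_0$, transport and commutator estimates for~$a$, maximal heat regularity and variable-coefficient elliptic estimates for~$(u,\Pi)$, and a contraction argument for uniqueness---is the same as the paper's, which constructs the solution by Friedrich regularization and establishes the same a priori estimates.

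The gap is in the absorption of the low-frequency piece $S_N a_0\,(\nabla\Pi-\Delta u)$, which you claim can be controlled by Bernstein's inequality with a factor $T$ in front. This cannot work as stated: the only estimate available from the product law is $\|S_N a_0\,\Delta u\|_{L^1_T(B^{1/2}_{2,1})}\lesssim C(N)\,\|u\|_{L^1_T(B^{5/2}_{2,1})}$ and $\|S_N a_0\,\nabla\Pi\|_{L^1_T(B^{1/2}_{2,1})}\lesssim C(N)\,\|\nabla\Pi\|_{L^1_T(B^{1/2}_{2,1})}$, i.e.\ the same order as the left-hand side with a large (not small) coefficient, and since $\Delta u$ and $\nabla\Pi$ only lie in $L^1_T$, not $L^\infty_T$, no factor of~$T$ appears. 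The paper's actual device is algebraic: before localizing in frequency one rewrites $(1+S_m a)\Delta u=\dive\bigl((1+S_m a)\nabla u\bigr)-\nabla S_m a\cdot\nabla u$ and $(1+S_m a)\nabla\Pi=\nabla\bigl((1+S_m a)\Pi\bigr)-\Pi\,\nabla S_m a$, so that after pairing $\Delta_q(\cdot)$ against $\Delta_q u$ in $L^2$ the divergence-form term produces the coercive quantity $\int(1+S_m a)|\Delta_q\nabla u|^2$ bounded below by $\frac{b}{2}2^{2q}\|\Delta_q u\|^2_{L^2}$, the gradient term vanishes by $\dive u=0$, and what is left are genuinely lower-order pieces ($\Pi\,\nabla S_m a$, $\nabla S_m a\cdot\nabla u$, and the commutators $[\Delta_q,S_m a]\nabla u$, $[\Delta_q,S_m a]\nabla\Pi$) carrying one fewer derivative on $(u,\Pi)$. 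These are then closed by interpolation and Young's inequality, absorbing a small fraction of $\|u\|_{L^1_T(B^{5/2}_{2,1})}$ and producing terms of the form $\int_0^T W(t')\|u(t')\|_{B^{1/2}_{2,1}}\,dt'$ that Gronwall handles (that is where a factor of~$T$ effectively reappears). Without this integration by parts and commutator structure, the low-frequency coefficient cannot be absorbed and the scheme does not close. Your splitting using $S_N a_0$ instead of $S_m a$ (together with $a^{n+1}-a_0$ small for short time) is a sound and arguably cleaner variant, but the missing step is the same.
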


\medbreak

\begin{remark}
The difficulty, as mentionned previously, is that the density $a$ is not supposed to be small. To overcome this issue, we split the density $1+a$ into 
$$ 1+ a = (1 + S_{m}a) + (a -S_{m}a), \quad  \hbox{where} \quad S_{m}a \eqdefa \sum_{j \leqslant m-1} \Delta_{j}a.$$
The first part is then regular enough, the second part can be made small enough, for some large enough integer $m$: we fix $m$ in the sequel such that $\| a - S_{m}a \|_{B^{\frac{3}{2}}_{2,1}} \leqslant c$. 
\end{remark}

\noindent The local wellposedness Theorem \ref{theorem LWP} is an immediate consequence of Lemma below, which will be useful in the sequel. 

\begin{lemma}\sl{
\label{lemma general avec linfini}
 Let $T>0$ be a fixed finite time. For any $t \in [0,T]$, the velocity estimate is given by
\begin{equation}
\label{velocity bis}
\|u \|_{L^{\infty}_{t}(B^{\frac{1}{2}}_{2,1})} + \|u \|_{L^{1}_{t}(B^{\frac{5}{2}}_{2,1})} + \|\nabla \Pi \|_{L^{1}_{t}(B^{\frac{1}{2}}_{2,1})} \leqslant C\, \|u_{0}\|_{B^{\frac{1}{2}}_{2,1}} + \int_{0}^{t} \, \bigl( \|\nabla u(t') \|_{L^{\infty}} \, + \, W(t')  \bigr) \|u(t') \|_{B^{\frac{1}{2}}_{2,1}} dt', 
\end{equation}  
\noindent where $$ W(t') \eqdefa 2^{2m}\, \| a\|^{2}_{L^{\infty}_{t'}(L^{\infty})} \, +\, 2^{8m}\, \|a\|^{4}_{L^{\infty}_{t'}(L^{2})}\, \bigl( 1 +\, \|u\|^{4}_{L^{\infty}_{t'}(B^{\frac{1}{2}}_{2,1})}\, + \| a\|^{4}_{L^{\infty}_{t'}(L^{\infty})}   \bigr).$$  
}\end{lemma}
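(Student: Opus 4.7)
The plan is to view the momentum equation of (\ref{NSIH avec a}) as a perturbed Stokes system
$$\partial_t u - \Delta u + \nabla \Pi = F, \quad \dive u = 0, \quad u_{|t=0} = u_0,$$
with $F := -u\cdot\nabla u - a(\nabla \Pi - \Delta u)$, and then to apply the maximal regularity estimate for the non-stationary Stokes system on $\T^3$ in the critical Besov framework (which gives two extra derivatives in $L^1_t$):
$$\|u\|_{L^\infty_t(B^{1/2}_{2,1})} + \|u\|_{L^1_t(B^{5/2}_{2,1})} + \|\nabla \Pi\|_{L^1_t(B^{1/2}_{2,1})} \leqslant C\,\|u_0\|_{B^{1/2}_{2,1}} + C\,\|F\|_{L^1_t(B^{1/2}_{2,1})}.$$
On the torus one has to be slightly careful about the mean value, but this is the kind of tool collected in the preliminary section referred to in the introduction. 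The whole task reduces to controlling $\|F\|_{L^1_t(B^{1/2}_{2,1})}$ so that the non-small norm $\|a\|_{B^{3/2}_{2,1}}$ never appears linearly on the right-hand side.

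For the convective term, I would use Bony's paraproduct decomposition together with the divergence-free condition on $u$: the critical Besov product law yields
$$\|u\cdot\nabla u\|_{B^{1/2}_{2,1}} \lesssim \|\nabla u\|_{L^\infty}\,\|u\|_{B^{1/2}_{2,1}} \; + \; \textrm{terms interpolated with}\; \|u\|_{B^{5/2}_{2,1}},$$
so that, after time integration, the convective piece contributes exactly the $\int_0^t \|\nabla u\|_{L^\infty}\|u\|_{B^{1/2}_{2,1}}\,dt'$ announced in the lemma.

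The core of the argument is the treatment of $a(\nabla \Pi - \Delta u)$, where the splitting $a = S_m a + (a - S_m a)$ proposed in the remark plays the decisive role. The high-frequency part $a - S_m a$ is small in $B^{3/2}_{2,1}$ (by the very choice of $m$), hence the standard product estimate gives
$$\|(a - S_m a)(\nabla \Pi - \Delta u)\|_{B^{1/2}_{2,1}} \lesssim c\,\bigl(\|\nabla \Pi\|_{B^{1/2}_{2,1}} + \|u\|_{B^{5/2}_{2,1}}\bigr),$$
and this contribution can be absorbed in the left-hand side of the Stokes estimate. The remaining term $S_m a\,(\nabla \Pi - \Delta u)$ is the one where $\|a\|_{B^{3/2}_{2,1}}$ must not be used. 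To handle it I would exploit the fact that $S_m a$ is spectrally localized in the ball of radius $2^m$: Bernstein's inequality allows to trade regularity for lower integrability and express its Besov norms in terms of $\|a\|_{L^2}$ and $\|a\|_{L^\infty}$, at the price of explicit powers $2^{km}$. After plugging in a low-regularity estimate for $\nabla \Pi$ obtained from the elliptic equation
$$\dive\bigl((1+a)\nabla \Pi\bigr) = -\dive(u\cdot\nabla u) + \dive\bigl((1+a)\Delta u\bigr),$$
the various $2^{km}$ factors combine to produce precisely the function $W(t')$: the quadratic piece $2^{2m}\|a\|^2_{L^\infty}$ coming from an $L^\infty\times L^2$ type product, and the quartic piece $2^{8m}\|a\|^4_{L^2}(1 + \|u\|^4 + \|a\|^4_{L^\infty})$ coming from an iterated use of Bernstein together with a Neumann-series resolution of the pressure equation around the smooth coefficient $1 + S_m a$.

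The step I expect to be the main obstacle is precisely this pressure estimate at several regularity levels: the elliptic operator $\dive((1+a)\nabla \cdot)$ has a merely $B^{3/2}_{2,1}$ coefficient (bounded from below by $b$), and inverting it while avoiding a linear dependence on $\|a\|_{B^{3/2}_{2,1}}$ forces one to split again along $1 + S_m a$ versus $a - S_m a$ and to iterate a contraction argument, which is what ultimately forces the unusual powers of $2^m$ that appear in $W$.
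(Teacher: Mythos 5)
Your plan treats $-a(\nabla\Pi - \Delta u)$ entirely as a forcing term for a constant-coefficient Stokes system and then tries to bound $\|F\|_{L^1_t(B^{1/2}_{2,1})}$. The high-frequency piece $(a-S_m a)(\nabla\Pi-\Delta u)$ is indeed small and absorbs, but the low-frequency piece $S_m a\,\Delta u$ cannot be handled this way: whatever product estimate you write (paraproduct, or Bernstein followed by a product law) yields
$\|S_m a\,\Delta u\|_{B^{1/2}_{2,1}} \lesssim \|a\|_{L^\infty}\,\|u\|_{B^{5/2}_{2,1}}$ or $2^{km}\|a\|_{L^2}\,\|u\|_{B^{5/2}_{2,1}}$,
which is of exactly the same order as the term $\|u\|_{L^1_t(B^{5/2}_{2,1})}$ you want to keep on the left, but with a coefficient that is \emph{not} small (only $a-S_m a$ is chosen small; $\|a\|_{L^\infty}$ and $2^{km}\|a\|_{L^2}$ are not). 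This contribution can neither be absorbed nor be interpolated downward, since it already carries the top norm of $u$. Iterating Bernstein or a Neumann series for the pressure, as you suggest, only shuffles constants around; it cannot lower the order.

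The paper avoids this problem by \emph{not} treating $(1+S_m a)\Delta u$ as forcing. It keeps the variable coefficient in the parabolic operator, localizes in frequency, and commutes:
\begin{equation*}
-\Delta_q\bigl((1+S_m a)\Delta u\bigr) = -\dive\bigl((1+S_m a)\,\Delta_q\nabla u\bigr) - \dive\bigl([\Delta_q, S_m a]\nabla u\bigr) + \Delta_q\bigl(\nabla S_m a\,\nabla u\bigr),
\end{equation*}
so that the $L^2$ pairing with $\Delta_q u$ produces the coercive quantity $\int(1+S_m a)|\Delta_q\nabla u|^2 \geq \tfrac{b}{2}\,2^{2q}\|\Delta_q u\|_{L^2}^2$, while the residual error terms (the commutator $[\Delta_q,S_m a]\nabla u$, the product $\nabla S_m a\,\nabla u$, and $\Pi\,\nabla S_m a$) are genuinely of lower order, involving only $\|u\|_{\dot H^2}$ or $\|u\|_{B^{3/2}_{2,1}}$. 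Those lower-order factors are then closed by interpolation ($\|u\|_{\dot H^2}\lesssim\|u\|^{1/4}_{B^{1/2}_{2,1}}\|u\|^{3/4}_{B^{5/2}_{2,1}}$, $\|u\|_{B^{3/2}_{2,1}}\lesssim\|u\|^{1/2}_{B^{1/2}_{2,1}}\|u\|^{1/2}_{B^{5/2}_{2,1}}$) and Young's inequality, and it is this step — not any Neumann iteration — that is the actual source of the quartic exponents and the $2^{8m}$ in $W$. Likewise the pressure is estimated from $\dive\bigl((1+S_m a)\nabla\Pi\bigr)=\ldots$, with the regularized coefficient kept inside the elliptic operator, rather than from the $\dive\bigl((1+a)\nabla\cdot\bigr)$ equation you propose.
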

\vskip 0.2cm
\noindent Two above results will provide us the local and uniqueness existence of a solution $(a,u)$. Concerning the global aspect to this solution, we shall use an energy method, which can be achieved by vertue of Theorem \ref{theorem1GWP} below.
\begin{theorem}[Global wellposedness Theorem]\;\,
\label{theorem1GWP}\sl{
Given the initial data $(\rho_{0},u_{0})$ and two positive constants $m$ and $M$ such that 
\begin{equation}
u_{0} \in  H^{2}(\T^3), \quad  0 < m \leqslant \rho_{0}(x) \leqslant M, \quad \hbox{and} \quad \int_{\T^3} \rho_{0}\,u_{0}=0.
\end{equation} 
There exists a constant $\varepsilon_{0}>0$ (depending on $m$ and $M$)  such that if $u_{0}$ satisfies the smallness condition $\ds{ \|  u_{0}\|_{H^2} \leqslant \varepsilon_{0}}$  
then, the system (\ref{NSIH}) has a (unique) global solution $(\rho,u)$ which satisfies for any $(t,x) \in [0,+\infty[ \times \T^3$ 
\begin{equation}
\begin{split}
&0 < m \leqslant \rho(t,x) \leqslant M,\\
& B_{0}(t) \leqslant \| \sqrt{\rho_{0}} u_{0}\|^{2}_{L^2}, \\
& B_{1}(t) \leqslant C\,  \| \nabla u_{0} \|^{2}_{L^2},\\
&B_{2}(t) \leqslant\, C\, \Bigl(1\, +\,  \|  u_{0} \|^4_{H^2}\Bigr)\,\, \|  u_{0}\|^2_{H^2}\,\,
\exp{\Bigl(  \|  u_{0}\|^2_{L^2} + \|  \nabla u_{0}\|^2_{L^2} \Bigr)}\\  
\end{split}
\end{equation}
where $ B_{0}(t)$, $ B_{1}(t)$ and $ B_{2}(t)$ are defined by 
\begin{equation}
\label{L2 energy estimate}
\begin{split}\
 B_{0}(T) &\eqdefa\, \sup_{t \in [0,T]} \| \sqrt{\rho} u(t)\|^{2}_{L^2} \, + \, \int_{0}^{T} \int_{\T^3} | \nabla u(t,x)|^2 \,dx\,dt. \\
\end{split}
\end{equation}
\begin{equation}
\label{H1 energy estimate}
\begin{split}\
& B_{1}(T) \eqdefa\,  \,\sup_{t \in [0,T]} \| \nabla{u}(t)\|^{2}_{L^2} + \int_{0}^{T} \Bigl(\| \sqrt{\rho}\,\partial_{t} u(t)\|^{2}_{L^2} + \| \nabla{\Pi}(t)\|^{2}_{L^2} \Bigr) dt\, + \frac{1}{8}\int_{0}^{T} \| \nabla^2{u}(t)\|^{2}_{L^2}\, dt,  
\end{split}
\end{equation} 
\begin{equation}
\label{H2 energy estimate}
\begin{split}
 B_{2}(T) &\eqdefa\,  \, \sup_{t \in [0,T]} \,  \Bigl(\frac{1}{2} \|\nabla^2{u}(t)\|^2_{L^2} + \| \nabla{\Pi}(t)\|^2_{L^2} + \frac{m}{3}\, \| \partial_{t} u(t) \|^2_{L^2}\Bigr)\\ &\qquad \qquad +\  \frac{1}{4} \int_{0}^{T} \, \|\nabla{\partial_{t} u(t)}\|^2_{L^2} \, dt + \frac{1}{2} \int_{0}^{T} \, \|\nabla^2{u}(t)\|^2_{L^6}  dt\,+ \,\int_{0}^{T} \, \| \nabla^2 \Pi(t) \|^2_{L^6}\, dt.
\end{split}
\end{equation} 
}\end{theorem}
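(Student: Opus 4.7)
The approach is a classical three-tier energy cascade in $L^2$, $H^1$, and $H^2$, closed by a continuity/bootstrap argument once the smallness of $\|u_0\|_{H^2}$ is exploited. Local existence at the $H^2$ regularity level being standard (Ladyzhenskaya-Solonnikov-type), the main content is to derive the three a priori bounds $B_0$, $B_1$, $B_2$ and show they propagate globally in time. A torus-specific subtlety is that only $\int_{\T^3}\rho u\,dx$ is preserved (Remark \ref{argument 2}), so the hypothesis $\int \rho_0 u_0 = 0$ is essential in order to apply Poincar\'e-type inequalities along the flow.

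\medskip
\noindent Since the continuity equation is a pure transport by a divergence-free field, the density bounds $m \leq \rho_0 \leq M$ are propagated pointwise. Testing the momentum equation with $u$ and rewriting the time derivative via mass conservation yields $\frac{1}{2}\frac{d}{dt}\|\sqrt{\rho}\,u\|^2_{L^2} + \|\nabla u\|^2_{L^2} = 0$, hence $B_0(t) \leq \|\sqrt{\rho_0}\,u_0\|^2_{L^2}$. For $B_1$, I would test the momentum equation with $\partial_t u$ to obtain
\begin{equation*}
\int_{\T^3}\rho|\partial_t u|^2\,dx \,+\, \frac{1}{2}\frac{d}{dt}\|\nabla u\|^2_{L^2} \,=\, -\int_{\T^3}\rho\,u\cdot\nabla u\cdot\partial_t u\,dx,
\end{equation*}
whose right-hand side is estimated by H\"older, Sobolev embedding $H^1\hookrightarrow L^6$, and Young's inequality, absorbing a fraction of $\|\sqrt{\rho}\,\partial_t u\|^2_{L^2}$ on the left. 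The Stokes-type identity $-\Delta u + \nabla \Pi = -\rho(\partial_t u + u\cdot\nabla u)$ together with elliptic regularity then yields the $\|\nabla^2 u\|_{L^2}$ and $\|\nabla\Pi\|_{L^2}$ controls appearing in (\ref{H1 energy estimate}).

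\medskip
\noindent For $B_2$, I would differentiate the momentum equation in $t$, test with $\partial_t u$, and use $\partial_t \rho = -\dive(\rho u)$ to eliminate the time derivative of the density, producing a differential inequality of the form
\begin{equation*}
\frac{1}{2}\frac{d}{dt}\|\sqrt{\rho}\,\partial_t u\|^2_{L^2} \,+\, \|\nabla\partial_t u\|^2_{L^2} \,\leq\, \Phi(t)\,\|\sqrt{\rho}\,\partial_t u\|^2_{L^2} \,+\, \hbox{lower-order terms},
\end{equation*}
where $\Phi(t)$ is a polynomial in $\|u\|_{H^1}$ and $\|\nabla u\|_{L^2}$. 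The cubic nonlinearities in $u$, $\nabla u$, $\partial_t u$ are handled via Gagliardo-Nirenberg interpolation together with the $L^6$ Stokes regularity for $\nabla^2 u$ and $\nabla^2\Pi$ featuring in (\ref{H2 energy estimate}). A Gronwall argument then produces the exponential factor $\exp(\|u_0\|^2_{L^2} + \|\nabla u_0\|^2_{L^2})$, since $\int_0^T \Phi(t)\,dt$ is controlled by $B_0(T) + B_1(T) \lesssim \|u_0\|^2_{H^1}$.

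\medskip
\noindent The main obstacle will be closing the bootstrap: both the $B_1$ and $B_2$ differential inequalities contain superlinear terms on the right (for instance contributions of the form $\|\nabla u\|^4_{L^2}\,\|\nabla u\|^2_{L^2}$) that do not close unconditionally. Under the smallness assumption $\|u_0\|_{H^2} \leq \varepsilon_0(m,M)$, I would proceed by continuity: set $T^{\star} = \sup\{T \geq 0 : B_1(T) + B_2(T) \leq K\,\|u_0\|^2_{H^2}\}$ for a suitably large constant $K = K(m,M)$, and show via the a priori estimates above that on $[0,T^{\star}]$ the bound is in fact strictly improved, forcing $T^{\star} = +\infty$. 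Uniqueness at the $H^2$ regularity is obtained separately by a standard $L^2$-stability estimate between two solutions, using the density bounds and the $L^\infty_t H^2$ control of $u$.
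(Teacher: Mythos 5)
Your existence argument and the three-tier energy cascade $B_0\to B_1\to B_2$ coincide with the paper's proof in all essentials: the $L^2$ estimate by testing with $u$, the $H^1$ estimate by testing with $\partial_t u$ and using the Stokes identity $-\Delta u+\nabla\Pi = -\rho(\partial_t u + u\cdot\nabla u)$ for elliptic regularity, the $H^2$ estimate by differentiating in time, and closure by smallness of $\|u_0\|_{H^2}$ with the Gronwall exponent controlled by $B_0+B_1$. You also correctly identify the torus-specific role of the hypothesis $\int_{\T^3}\rho_0 u_0 = 0$, which replaces the missing Poincar\'e inequality; the paper instantiates this as Lemma~\ref{average} and Propositions~\ref{application1 lemme average} and \ref{application 2 lemme average}, giving the sharp bound $\|u\|_{L^6}\lesssim C(\rho_0)\|\nabla u\|_{L^2}$ used throughout rather than the weaker $H^1\hookrightarrow L^6$, but both routes close once $\|u\|_{L^2}$ is absorbed via $B_0$ and the lower bound on $\rho$.

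The one genuine gap is your uniqueness sketch. You propose a ``standard $L^2$-stability estimate between two solutions, using the density bounds and the $L^\infty_t H^2$ control of $u$.'' This does not work at the regularity level of Theorem~\ref{theorem1GWP}: the density is assumed only bounded above and below, with \emph{no} spatial regularity, so the transport equation for the difference $\delta\rho = \rho_2 - \rho_1$ contains the term $\delta u\cdot\nabla\rho_1$, which is not even well-defined distributionally for $\rho_1\in L^\infty_{t,x}$ alone, and there is no useful $L^2$ bound on $\delta\rho$ to feed into the momentum equation. This is precisely why the paper does not prove uniqueness directly but instead appeals to the Lagrangian-coordinates argument of Paicu--Zhang--Zhang (\cite{PZZ}), which eliminates the density from the estimates altogether. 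Your proposal should replace the $L^2$-stability claim by a reference to that Lagrangian method, or restrict the uniqueness claim to the higher-regularity class of Theorem~\ref{theorem LWP}, where the density does have Besov regularity.
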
 
\begin{remark}
\noindent We shall prove the existence and global part by an energy method.  We underline the very weak assumption (bounded from above and below) on the density we need. We refer the reader to \cite{PZZ} for the uniqueness proof. 
\end{remark}
\medskip
\noindent \textit{Guideline of the proof and organisation of the paper.}\\
Firstly, we prove the local existence and uniqueness of a solution, under hypothesis of Theorem \ref{theorem LWP}. Then, we underlinde that, provided $\|u_{0}\|_{B^{\frac{1}{2}}_{2,1}}$ is small enough, the lifespan  $T^{*}(u_{0})$ of the local solution associated with this data should be greater than $1$. This is due to scaling argument. In addition, velocity estimate (\ref{velocity}) implies
\begin{equation}
\label{petitesse sur u(t1)}
\exists t_{1} \in [0,1[\, \quad \hbox{such that} \quad u(t_{1}) \in H^2 \quad \hbox{and} \quad \|u(t_{1})\|_{H^2} \leqslant C\, \|u_{0}\|_{B^{\frac{1}{2}}_{2,1}} . 
\end{equation} 
This stems from an interpolation argument, provided~$T^{*}(u_{0}) >1$. Indeed, assume we have proved there exists an unique solution $u$ such that 
$$ u \in  L^{\infty}([0,T],B^{\frac{1}{2}}_{2,1}) \cap L^{1}([0,T], B^{\frac{5}{2}}_{2,1}),$$ and thus, $u$ belongs to $L^{\frac{4}{3}}([0,T],H^{2})$, which provide the existence of the small time~$t_{1}$, such that (\ref{petitesse sur u(t1)}) is satisfied. \\
\noindent From this point, the strategy to deal with the global property of our system takes another direction than the strategy setting up in \cite{AGZ2}. Indeed, we shall prove that, considering~$u(t_{1})$ as an initial data in~$H^2$, which is small enough (since~$ \|u_{0}\|_{B^{\frac{1}{2}}_{2,1}}$ is supposed to be so) and thanks to Theorem \ref{theorem1GWP} below, there exists a global solution (the uniqueness is non necessary for what we need in the sequel).\\Then, it remains to be seen that such a solution has the relevant regularity, namely the regularity demanding by Theorem \ref{theorem LWP}. In others words, it is crucial to prove the propagation of the regularity of the density function $a$, from which we infer the regularity of the velocity, thanks to Lemma \ref{lemma general avec linfini}. To sum up, we will prove the existence of a global solution with the relevant regularity : this proves the uniqueness of such a solution.

\medskip 
\noindent The paper is structured as follows. In Section $2$, we collect some basic facts on Littlewood Paley theory, Besov spaces and we will give the classical inequalities (well-known in the whole space $\R^3$), in the case of the torus $\T^3$. In addition, we will stress on the important role of the average $u$.\\ Section $3$ is devoted to the proof of the main Theorem \ref{main theorem}. Section $4$ deals with the local wellposedness issue of the main theorem : we will prove Theorem \ref{theorem LWP}. Section $5$ provides the global wellposedness aspect of the main theorem, which will stem from the proof of Theorem \ref{theorem1GWP}. Let us mention we will only give in both two cases the a priori estimates. It means we skip the standard procedure of Friedrich's regularization. The point is that we deal with uniform estimates, in which we use a standard compactness argument.   

\medbreak 
\section{Tool box concerning estimates on the Torus $\T^3$}
\begin{prop}(Poincar\'e-Wirtinger inequality)\\
Let $u$ be in $H^1(\T^3)$ and mean free. Then we have :
$$ \| u\|_{L^2(\T^3)} \leqslant \| \nabla{u} \|_{L^2(\T^3)}. $$  
In particular, the $\dot{H}^1(\T^3)$ and $H^1(\T^3)$-norms are equivalent, when $\bar{u}$ is mean free. 
\end{prop}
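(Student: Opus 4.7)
The natural approach is Fourier analysis on the torus. Since $u\in H^1(\T^3)$, it admits a Fourier series $u(x)=\sum_{k\in\Z^3}\hat u(k)\,e_k(x)$, where $\{e_k\}_{k\in\Z^3}$ is the standard orthonormal trigonometric basis of $L^2(\T^3)$ (using the normalization $|\T^3|=1$). The mean-free assumption $\int_{\T^3}u=0$ translates exactly into $\hat u(0)=0$, so all spectral mass lives on $k\neq 0$. This is the structural fact that makes the inequality work: the constant function, which is the only obstruction to controlling $L^2$ by the Dirichlet form, has been eliminated.

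From there the plan is two applications of Parseval's identity: one for $\|u\|_{L^2}^2=\sum_{k\neq 0}|\hat u(k)|^2$ and one for $\|\nabla u\|_{L^2}^2=\sum_{k\neq 0}\lambda_k^2|\hat u(k)|^2$, where $\lambda_k^2$ is the eigenvalue of $-\Delta$ on the mode $e_k$. For every nonzero $k\in\Z^3$ one has $\lambda_k^2\geqslant\lambda_{\min}^2\geqslant 1$ (the smallest nonzero eigenvalue of $-\Delta$ on $\T^3$), so
\begin{equation*}
\|\nabla u\|_{L^2}^2\;=\;\sum_{k\neq 0}\lambda_k^2\,|\hat u(k)|^2\;\geqslant\;\sum_{k\neq 0}|\hat u(k)|^2\;=\;\|u\|_{L^2}^2,
\end{equation*}
which is the desired inequality. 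To make this rigorous for general $u\in H^1$ rather than smooth trigonometric polynomials, one approximates $u$ by partial Fourier sums $S_N u$; each $S_N u$ is still mean-free and satisfies the inequality, and passing to the limit using $S_N u\to u$ in $H^1$ transfers the bound to $u$.

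I do not expect any real obstacle. The only care needed is the constant in front of $|k|^2$ coming from the convention on $\T^3$ (whether the fundamental domain is $[0,1]^3$ or $[0,2\pi]^3$), but since $|\T^3|=1$ is fixed and the smallest nonzero frequency contributes $\lambda_{\min}^2\geqslant 1$, the stated form $\|u\|_{L^2}\leqslant\|\nabla u\|_{L^2}$ follows with room to spare. The equivalence of the $\dot H^1$ and $H^1$ norms on mean-free functions is then immediate, since $\|u\|_{H^1}^2=\|u\|_{L^2}^2+\|\nabla u\|_{L^2}^2\leqslant 2\|\nabla u\|_{L^2}^2=2\|u\|_{\dot H^1}^2$, while the reverse inequality $\|u\|_{\dot H^1}\leqslant\|u\|_{H^1}$ is trivial.
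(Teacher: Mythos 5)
The paper states this Poincar\'e--Wirtinger inequality without proof, treating it as a standard fact, so there is no argument of the paper's to compare yours against. Your Fourier-series proof is correct and is the canonical one for the torus. With the normalization $|\T^3|=1$ the orthonormal characters are $e_k(x)=e^{2\pi i k\cdot x}$, $k\in\Z^3$, so by Parseval the smallest nonzero eigenvalue of $-\Delta$ is in fact $4\pi^2>1$; the mean-free hypothesis kills the $k=0$ mode and gives $\|u\|_{L^2}\leqslant\frac{1}{2\pi}\|\nabla u\|_{L^2}$, which is stronger than, and implies, the stated inequality. The density argument through partial sums $S_N u$ (which preserve the zero-average condition) correctly extends the inequality from trigonometric polynomials to all of $H^1$, and the concluding two-line derivation of the equivalence of the $\dot H^1$ and $H^1$ norms on mean-free functions is also correct.
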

\vskip 0.3cm
\noindent An obvious consequence of the Poincar\'e-Wirtinger inequality is the corollary below.
\begin{cor}
Let $u$ be in $H^1(\T^3)$. Then we have :
$$ \| u - \bar{u} \|_{L^2(\T^3)} \leqslant \| \nabla{u} \|_{L^2(\T^3)}. $$ 
\end{cor}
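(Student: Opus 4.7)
The plan is to reduce the claim to the Poincaré--Wirtinger inequality stated in the preceding proposition by applying it to the shifted function $v \eqdefa u - \bar{u}$. First I would verify that $v \in H^1(\T^3)$: since $\bar{u}$ is a real constant and $|\T^3|=1$ is finite, $\bar{u}$ belongs to $L^2(\T^3)$, hence $v = u - \bar{u} \in L^2(\T^3)$; moreover $\nabla v = \nabla u \in L^2(\T^3)$ because the gradient of a constant vanishes. Thus $v \in H^1(\T^3)$.

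Next I would check that $v$ is mean free. By linearity of integration and the definition of $\bar{u}$,
\begin{equation*}
\bar{v} = \int_{\T^3} (u(x) - \bar{u})\, dx = \bar{u} - \bar{u}\,|\T^3| = 0,
\end{equation*}
using $|\T^3| = 1$. Therefore the hypotheses of the Poincaré--Wirtinger proposition are satisfied by $v$.

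Finally, applying the proposition to $v$ yields
\begin{equation*}
\| u - \bar{u} \|_{L^2(\T^3)} = \| v \|_{L^2(\T^3)} \leqslant \| \nabla v \|_{L^2(\T^3)} = \| \nabla u \|_{L^2(\T^3)},
\end{equation*}
which is precisely the desired inequality. There is no real obstacle here: the only subtlety is the observation that subtracting the mean does not change the gradient while producing a mean-free function, which is exactly the hypothesis required by Poincaré--Wirtinger. This is why the result is stated as an immediate corollary.
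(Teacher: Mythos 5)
Your proof is correct and is precisely the argument the paper has in mind, which it omits as "an obvious consequence" of Poincar\'e--Wirtinger: replace $u$ by the mean-free function $u-\bar u$, whose gradient coincides with $\nabla u$. Nothing further to add.
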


\vskip 0.3cm
\begin{prop}(Gagliardo-Niremberg inequality)
\label{GN}
$$ \hbox {In the whole space}\quad  \R^3 : \quad \| u \|_{L^p} \leqslant \|u\|^{\frac{3}{p}-\frac{1}{2}}_{L^2}\, \| \nabla{u}\|^{\frac{3}{2}-\frac{3}{p}}_{L^2},\quad \hbox{with} \quad 2\leqslant p \leq 6.$$ 
$$ \hbox {On the torus}\quad \T^3 : \quad \| u - \bar{u} \|_{L^p} \leqslant \|u\|^{\frac{3}{p}-\frac{1}{2}}_{L^2}\, \| \nabla{u}\|^{\frac{3}{2}-\frac{3}{p}}_{L^2},\quad \hbox{with} \quad  2\leqslant p \leq 6.$$
\end{prop}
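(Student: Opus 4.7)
The plan is to derive the torus version by reducing to the mean-zero case and combining $L^2$--$L^6$ interpolation with the Sobolev embedding $H^1(\T^3) \hookrightarrow L^6(\T^3)$ and the Poincar\'e-Wirtinger inequality already proved in this section. One cannot invoke the whole-space inequality directly, since functions on $\T^3$ do not decay, but subtracting the average removes this obstruction. Accordingly, I set $v := u - \bar u$, which satisfies $\bar v = 0$ and $\nabla v = \nabla u$.

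The first step is a H\"older interpolation between $L^2$ and $L^6$. For $p \in [2,6]$, writing $\frac{1}{p} = \frac{\theta}{2} + \frac{1-\theta}{6}$ forces $\theta = \frac{3}{p} - \frac{1}{2} \in [0,1]$, and H\"older gives
\[
\|v\|_{L^p} \leq \|v\|_{L^2}^{\theta}\,\|v\|_{L^6}^{1-\theta} = \|v\|_{L^2}^{\frac{3}{p}-\frac{1}{2}}\,\|v\|_{L^6}^{\frac{3}{2}-\frac{3}{p}}.
\]

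The second step controls $\|v\|_{L^6}$. The Sobolev embedding on $\T^3$ yields $\|v\|_{L^6} \leq C\bigl(\|v\|_{L^2} + \|\nabla v\|_{L^2}\bigr)$, and since $\bar v = 0$ the Poincar\'e-Wirtinger inequality absorbs the first term into the second, producing $\|v\|_{L^6} \leq C\,\|\nabla v\|_{L^2} = C\,\|\nabla u\|_{L^2}$. Finally, using $|\T^3| = 1$ I have $\|v\|_{L^2}^2 = \|u\|_{L^2}^2 - \bar u^2 \leq \|u\|_{L^2}^2$. Plugging both estimates into the interpolation inequality delivers
\[
\|u - \bar u\|_{L^p} \leq C\,\|u\|_{L^2}^{\frac{3}{p}-\frac{1}{2}}\,\|\nabla u\|_{L^2}^{\frac{3}{2}-\frac{3}{p}},
\]
which is the claimed bound (the constant is absorbed into the notation of the statement).

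There is no genuine obstacle here: scaling fixes the exponents and the only issue is replacing the decay at infinity (used in the $\R^3$ proof) by the mean-zero condition on the torus, which is exactly what the Poincar\'e-Wirtinger inequality is tailored for. The extremal cases $p=2$ (trivial) and $p=6$ (Sobolev embedding itself) serve as a sanity check that the pair of exponents $\bigl(\frac{3}{p}-\frac{1}{2},\ \frac{3}{2}-\frac{3}{p}\bigr)$ is correct.
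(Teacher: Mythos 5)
The paper states this proposition without proof, treating it as a standard estimate in the toolbox section, so there is no argument in the source to compare against. Your derivation is correct: the H\"older interpolation between $L^2$ and $L^6$ with exponent $\theta=\frac{3}{p}-\frac12$, the Sobolev embedding $H^1(\T^3)\hookrightarrow L^6(\T^3)$ combined with Poincar\'e-Wirtinger to reduce $\|u-\bar u\|_{L^6}$ to $C\|\nabla u\|_{L^2}$, and the elementary bound $\|u-\bar u\|_{L^2}\leqslant\|u\|_{L^2}$ (valid because $|\T^3|=1$) together yield the stated inequality up to the multiplicative constant you correctly note the paper is suppressing; this is precisely the standard way to transfer Gagliardo--Nirenberg to the torus.
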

\noindent In particular, for $p=6$, we find the Sobolev embeddings on the torus : 
 $$ \| u  - \bar{u} \|_{L^6(\T^3)} \leqslant C\,\| \nabla u\|_{L^2(\T^3)} \quad \hbox{instead of} \quad \| u\|_{L^6(\R^3)} \leqslant C\,\| \nabla u\|_{L^2(\R^3)}.$$

\vskip 0.2cm
\noindent The following Lemma is fundamental in this paper. It highlights the crucial role playing by the average of the velocity. Because the framework of our work is the torus, we will need several times in the next, to have an estimate on the average. Actually, it provides a general method to compute the average of a quantity we are intesresting in. We will call it \textit{the average method} in the sequel.
\begin{lemma} 
\label{average}
Assuming that $|\T^3| = 1$ and $\dis{\int_{\T^3} \rho_{0}\,u_{0}=0}$, we have : 
$$|\bar{u}(t)| \leqslant \frac{\| \rho_{0} - \bar{\rho_{0}}\|_{L^2}}{|\bar{\rho_{0}}|}\, \| \nabla{u}(t)\|_{L^2}.$$
\end{lemma}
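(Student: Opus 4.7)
The plan is to use the conservation of momentum (Remark~\ref{argument 2}) together with the fact that the mean of $\rho$ is preserved in time (Remark~\ref{argument 1}) and that $\|\rho-\bar\rho\|_{L^2}$ is preserved (Remark~\ref{argument 3}), and finally close the estimate via the Poincaré--Wirtinger inequality applied to $u-\bar u$.

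More precisely, I would first write $\rho = \bar{\rho_{0}} + (\rho - \bar{\rho_{0}})$ (using $\bar{\rho}(t)=\bar{\rho_{0}}$) inside the conserved momentum integral. Since $\int_{\T^3}\rho_{0}u_{0}\,dx=0$, Remark~\ref{argument 2} yields
\begin{equation*}
0 \;=\; \int_{\T^3} \rho(t,x)\, u(t,x)\, dx \;=\; \bar{\rho_{0}}\, \bar{u}(t)\;+\;\int_{\T^3}\bigl(\rho(t,x)-\bar{\rho_{0}}\bigr)\, u(t,x)\, dx,
\end{equation*}
so that
\begin{equation*}
\bar{\rho_{0}}\, \bar{u}(t) \;=\; -\int_{\T^3}\bigl(\rho(t,x)-\bar{\rho_{0}}\bigr)\, u(t,x)\, dx.
\end{equation*}

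Next, since $\int_{\T^3}(\rho-\bar{\rho_{0}})\,dx = \bar{\rho}(t)-\bar{\rho_{0}}=0$, I can freely subtract the constant $\bar{u}(t)$ from $u$ in the right-hand side, obtaining
\begin{equation*}
\bar{\rho_{0}}\, \bar{u}(t) \;=\; -\int_{\T^3}\bigl(\rho(t,x)-\bar{\rho_{0}}\bigr)\bigl(u(t,x)-\bar{u}(t)\bigr)\, dx.
\end{equation*}
Applying Cauchy--Schwarz, then Remark~\ref{argument 3} to rewrite $\|\rho(t)-\bar{\rho_{0}}\|_{L^2}=\|\rho_{0}-\bar{\rho_{0}}\|_{L^2}$, and finally the Poincaré--Wirtinger inequality in the form $\|u(t)-\bar u(t)\|_{L^2}\leqslant \|\nabla u(t)\|_{L^2}$, gives
\begin{equation*}
|\bar{\rho_{0}}|\,|\bar{u}(t)| \;\leqslant\; \|\rho_{0}-\bar{\rho_{0}}\|_{L^2}\,\|\nabla u(t)\|_{L^2}.
\end{equation*}
Dividing by $|\bar{\rho_{0}}|$ (which is strictly positive since $\rho_{0}\geqslant b^{-1}>0$ implies $\bar{\rho_{0}}>0$) concludes the argument.

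The proof is essentially an algebraic rearrangement; there is no real obstacle once one sees that the momentum conservation identity can be split so as to reveal $\bar u$ as an explicit average, and that both auxiliary factors (the $L^2$ norm of $\rho-\bar{\rho_{0}}$ and of $u-\bar u$) can be controlled by quantities that are either conserved by the transport equation or by Poincaré--Wirtinger. The only point requiring a small amount of care is inserting and subtracting $\bar u(t)$ to get $u-\bar u$, which is legitimate precisely because $\rho-\bar{\rho_{0}}$ has zero mean.
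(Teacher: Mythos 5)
Your proof is correct and follows essentially the same route as the paper: both derive the identity $\bar{\rho_{0}}\,\bar{u}(t) = -\int_{\T^3}(\rho-\bar{\rho})(u-\bar{u})\,dx$ from momentum conservation and the zero-mean property of $\rho-\bar{\rho}$, then close via Cauchy--Schwarz, the conservation of $\|\rho-\bar{\rho}\|_{L^2}$, and Poincaré--Wirtinger. The only cosmetic difference is that you build the identity by inserting the constants into $\int\rho u=0$, whereas the paper expands $\int(\rho-\bar{\rho})(u-\bar{u})$ directly; the content is identical.
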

\vsd
\begin{proof} Let us consider the integral below and developp it
$$\int_{\T^3} (\rho - \bar{\rho})(t,x)( u - \bar{u})(t,x)\, dx = \int_{\T^3} \rho(t,x)\,u(t,x) -2\bar{\rho}(t)\,\bar{u}(t)\, + \bar{\rho}(t)\,\bar{u}(t).$$
Thanks to (\ref{argument 1})  and (\ref{argument 2}), we have
\begin{equation}
\begin{split}
\bar{u}(t) &= -\frac{1}{\bar{\rho}(t)} \int_{\T^3} (\rho - \bar{\rho})(t,x)( u - \bar{u})(t,x)\, dx\\
=& -\frac{1}{\bar{\rho_{0}}} \int_{\T^3} (\rho - \bar{\rho})(t,x)( u - \bar{u})(t)\\
|\bar{u}(t)|&\leqslant \frac{1}{|\bar{\rho_{0}}|} \, \| (\rho - \bar{\rho})(t)\|_{L^2}\,\| (u - \bar{u})(t)\|_{L^2}.\\
\end{split}\end{equation}
Applying (\ref{argument 3}), we have
\begin{equation}
\begin{split}
|\bar{u}(t)|&\leqslant \frac{1}{|\bar{\rho_{0}}|} \, \| \rho_{0} - \bar{\rho_{0}}\|_{L^2}\,\| (u - \bar{u})(t)\|_{L^2}.\\
\end{split}\end{equation}
Thanks to Poincar\'e-Wirtinger, we get :
\begin{equation}
|\bar{u}(t)| \leqslant \frac{\| \rho_{0} - \bar{\rho_{0}}\|_{L^2}\,}{|\bar{\rho_{0}}|} \,\| \nabla{u}(t)\|_{L^2}.\\
\end{equation}
\end{proof}

\vskip 0.3cm 
\begin{prop}
\label{application1 lemme average}
Assuming that $|\T^3| = 1$ and $\dis{\int_{\T^3} \rho_{0}\,u_{0}=0}$, \,\,\, therefore \, \,\,
$\ds{\| u(t)\|_{L^6} \leqslant  C(\rho_{0})\, \|\nabla u(t)\|_{L^2}}$.
\end{prop}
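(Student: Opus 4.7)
The plan is to exploit the natural splitting $u(t) = (u(t) - \bar{u}(t)) + \bar{u}(t)$ into a mean-free part and a constant part, and to estimate each piece by $\|\nabla u(t)\|_{L^2}$ using the tools already at our disposal.

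First, by the triangle inequality in $L^6(\T^3)$,
\begin{equation*}
\|u(t)\|_{L^6} \;\leq\; \|u(t) - \bar{u}(t)\|_{L^6} \;+\; \|\bar{u}(t)\|_{L^6}.
\end{equation*}
For the first term, I would apply the torus version of the Gagliardo-Nirenberg inequality in Proposition \ref{GN} with $p=6$, which yields directly
\begin{equation*}
\|u(t) - \bar{u}(t)\|_{L^6} \;\leq\; C\,\|\nabla u(t)\|_{L^2}.
\end{equation*}

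For the second term, the point is that $\bar{u}(t)$ is a constant vector on $\T^3$ and $|\T^3|=1$, so $\|\bar{u}(t)\|_{L^6} = |\bar{u}(t)|$. Now I would invoke Lemma \ref{average} (which is precisely designed for this purpose and which uses the hypotheses $|\T^3|=1$ and $\int_{\T^3}\rho_0 u_0 = 0$) to obtain
\begin{equation*}
|\bar{u}(t)| \;\leq\; \frac{\|\rho_0 - \bar{\rho_0}\|_{L^2}}{|\bar{\rho_0}|}\,\|\nabla u(t)\|_{L^2}.
\end{equation*}

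Combining the two estimates gives
\begin{equation*}
\|u(t)\|_{L^6} \;\leq\; \left(C + \frac{\|\rho_0 - \bar{\rho_0}\|_{L^2}}{|\bar{\rho_0}|}\right)\|\nabla u(t)\|_{L^2},
\end{equation*}
which is the claimed inequality with $C(\rho_0) \eqdefa C + \|\rho_0 - \bar{\rho_0}\|_{L^2}/|\bar{\rho_0}|$. There is no real obstacle here: the only subtle point is the dependence on $\rho_0$ which enters solely through $\bar{\rho_0}$ (bounded away from zero since $\rho_0 \geq$ some positive constant) and through the conserved $L^2$-norm of $\rho_0 - \bar{\rho_0}$ (cf.\ Remark \ref{argument 3}); both are finite constants determined by the initial density, so the constant $C(\rho_0)$ is indeed independent of $t$.
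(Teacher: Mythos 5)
Your proof is correct and follows essentially the same route as the paper: split $u$ into its mean-free part plus its average, estimate the former by the torus Gagliardo-Nirenberg inequality and the latter by Lemma \ref{average}. (If anything, your version via the ordinary triangle inequality is slightly cleaner than the paper's, which writes the splitting at the level of squared norms without the needed factor of $2$.)
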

\begin{proof}
\begin{equation}
\begin{split}
\| u(t)  \|^2_{L^6} &\leqslant  \| (u - \bar{u})(t)  \|^2_{L^6} + | \bar{u}(t) |^2\\
&\leqslant C\, \|\nabla u(t)\|^2_{L^2} + \frac{\| \rho_{0} - \bar{\rho_{0}}\|^2_{L^2}}{\bar{\rho_{0}}^2}\, \| \nabla{u}(t)\|^2_{L^2}\\
&\leqslant C(\rho_{0})\, \|\nabla u(t)\|^2_{L^2}.
\end{split}\end{equation}
\end{proof}
\vskip 0.3cm
\begin{prop}
\label{application 2 lemme average}
If $|\T^3| = 1$ and $\dis{\int_{\T^3} \rho_{0}\,u_{0}=0}$, \,\, \, then \, \, \,
$\ds{\|  u(t) \|_{L^3} \leqslant C(\rho_{0})\, \|  \nabla u(t) \|_{L^2}}$.
\end{prop}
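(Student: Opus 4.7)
The plan is to imitate the proof of Proposition \ref{application1 lemme average}, splitting $u(t)$ into its mean $\bar u(t)$ and its mean‐free part $u(t) - \bar u(t)$, and to control each piece by $\|\nabla u(t)\|_{L^2}$ with a constant depending only on $\rho_0$.

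First I would apply the triangle inequality
\[
\|u(t)\|_{L^3} \leq \|u(t) - \bar u(t)\|_{L^3} + |\bar u(t)|\, |\T^3|^{\frac{1}{3}},
\]
using $|\T^3|=1$. For the mean part, Lemma \ref{average} gives directly $|\bar u(t)| \leq \frac{\|\rho_0 - \bar\rho_0\|_{L^2}}{|\bar\rho_0|}\|\nabla u(t)\|_{L^2}$, which is already of the desired form.

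For the oscillating part, I would invoke the torus Gagliardo–Nirenberg inequality (Proposition \ref{GN}) at $p=3$, which yields
\[
\|u(t) - \bar u(t)\|_{L^3} \leq \|u(t)\|^{\frac{1}{2}}_{L^2}\,\|\nabla u(t)\|^{\frac{1}{2}}_{L^2}.
\]
The only subtle point (and the one that takes a short auxiliary computation) is that the right-hand side involves $\|u(t)\|_{L^2}$ rather than $\|u(t)-\bar u(t)\|_{L^2}$, so I must still control the full $L^2$ norm. I would do this by writing $\|u(t)\|_{L^2} \leq \|u(t)-\bar u(t)\|_{L^2} + |\bar u(t)|$, applying Poincar\'e–Wirtinger to the first term and Lemma \ref{average} to the second, so that
\[
\|u(t)\|_{L^2} \leq \Bigl(1 + \tfrac{\|\rho_0 - \bar\rho_0\|_{L^2}}{|\bar\rho_0|}\Bigr)\|\nabla u(t)\|_{L^2} \eqdefa C(\rho_0)\,\|\nabla u(t)\|_{L^2}.
\]

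Plugging this back into the Gagliardo–Nirenberg bound gives $\|u(t)-\bar u(t)\|_{L^3} \leq C(\rho_0)^{1/2}\|\nabla u(t)\|_{L^2}$, and combining with the estimate on $|\bar u(t)|$ yields the claim. I do not expect any serious obstacle: the proof is entirely parallel to Proposition \ref{application1 lemme average}, with the only extra bookkeeping being the step that upgrades $\|u-\bar u\|_{L^2}$ to $\|u\|_{L^2}$ via Poincar\'e–Wirtinger plus the average Lemma.
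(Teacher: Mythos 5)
Your proof is correct and follows the same strategy as the paper: split $u$ into its average and mean-free part, use the torus Gagliardo--Nirenberg inequality on the oscillating piece and Lemma \ref{average} on the mean. The one cosmetic difference is that the paper applies Gagliardo--Nirenberg directly to the mean-free function $u-\bar u$, so the right-hand side reads $\|u-\bar u\|_{L^2}^{1/2}\|\nabla u\|_{L^2}^{1/2}$ and Poincar\'e--Wirtinger finishes immediately, whereas you apply it to $u$ itself and then need the short auxiliary step upgrading $\|u\|_{L^2}$ to $C(\rho_0)\|\nabla u\|_{L^2}$; both are valid and yield the same constant up to harmless factors.
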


\begin{proof}
Arguments are similar as before. We introduce the average of $u$ and we apply succesively Gagliardo-Niremberg and Poincar\'e-Wirtinger inequalities  
\begin{equation}
\begin{split}
\|  u(t) \|_{L^3} &\leqslant \|  (u - \bar{u})(t) \|_{L^3} + | \bar{u}(t) |\\
& \leqslant \|  (u - \bar{u})(t) \|^{\frac{1}{2}}_{L^2}\, \| \nabla( u - \bar{u})(t) \|^{\frac{1}{2}}_{L^2} + | \bar{u}(t) |\\
&\leqslant \|  \nabla u(t)  \|^{\frac{1}{2}}_{L^2}\, \|  \nabla u(t)  \|^{\frac{1}{2}}_{L^2}\, + | \bar{u}(t) |\\
&\leqslant \|  \nabla u(t)  \|_{L^2}\, + | \bar{u}(t) |.\\ 
\end{split}
\end{equation}
Concerning the term $| \bar{u}(t) |$, same computations as in Lemma \ref{average} yield 
\begin{equation}
\begin{split}
\|  u(t) \|_{L^3} &\leqslant \|  \nabla u(t)  \|_{L^2}\, + \frac{1}{|\bar{\rho_{0}}|}\,\, \|\rho_{0} - \bar{\rho_{0}} \|_{L^2}\,\, \|  (u - \bar{u})(t) \|_{L^2}  \\
& \leqslant C(\rho_{0})\, \|  \nabla u(t)  \|_{L^2}.
\end{split}
\end{equation}
\end{proof}

\section{Proof of the main Theorem}
\vskip 0.3cm
\noindent Assuming we have proved Theorems \ref{theorem LWP} and \ref{theorem1GWP}, we can prove the main Theorem. Firstly, notice that Theorem \ref{theorem LWP} implies  
\begin{equation}
\exists t_{1} \in [0,T],\,\, u(t_{1} ) \in H^2 \cap  B^{\frac{1}{2}}_{2,1}, \quad \hbox{and} \quad \| u(t_{1} ) \|_{H^2}\, \leqslant  \| u_{0} \|_{B^{\frac{1}{2}}_{2,1}}. 
\end{equation}
Moreover, we have a fundamental information on $a(t_{1})$ : 
\begin{equation}
a(t_{1})  \in  B^{\frac{3}{2}}_{2,1} \cap L^{\infty}. 
\end{equation}
\noindent Let us underline that we have, by vertue of Remark \ref{argument 2}, 
\begin{equation}
\int_{\T^3} \frac{1}{1 + a(t_{1}) } \, u(t_{1})  = \int_{\T^3} \frac{1}{1 + a_{0}} \ u_{0} =0.
\end{equation} 
As a consequence, Theorem \ref{theorem1GWP} implies there exists a global solution $(\rho, w)$ of the system (\ref{NSIH1}) associated with  data  $$(\rho, w)_{t=0} \eqdefa \Bigl(\frac{1}{1 +a(t_{1})} ,u(t_{1}) \Bigr).$$ 
\noindent First of all, we adopt the classical point of view : from the solution $(\rho, w)$ of the system (\ref{NSIH1}), we define the solution $(a_{w},w)$ of the system (\ref{NSIH avec a}), given by $$ \rho \eqdefa \frac{1}{1+ a_{w}}.$$
\noindent Therefore, it follows that the solution $(a_{w},w)$ is associated with the data $ (a(t_{1}) ,u(t_{1}))$, which belongs to $B^{\frac{3}{2}}_{2,1} \cap L^{\infty} \times H^2$.\\ 
\noindent The goal is to prove the uniqueness of such a solution, which will come from the following regularity
$$ \forall\,\,  T\geqslant 0,\quad (a_{w}, w) \in \mathcal{C}([0,T],B^{\frac{3}{2}}_{2,1})\, \times\, \mathcal{C}([0,T],B^{\frac{1}{2}}_{2,1})\,\cap\, L^{1}([0,T],B^{\frac{5}{2}}_{2,1})\cdotp$$

\noindent Proving such a regularity on the density function and the velocity field provides us the uniqueness by vertue of local wellposedness Theorem \ref{theorem LWP}. The point is the propagation of the regularity of $a_{w}$.

\subsection{Propagation of the regularity of the density}
\begin{prop}
\label{propagation de la regularity}
Let $T>0$ be a time fixed. Then, $\ds{\forall t \in [0,T], \quad a_{w}(t) \in  B^{\frac{3}{2}}_{2,1}}$.
\end{prop}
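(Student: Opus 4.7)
The strategy is to exploit the transport structure of the $a_{w}$-equation $\partial_{t}a_{w} + w\cdot\nabla a_{w} = 0$ together with the high regularity of $w$ produced by Theorem \ref{theorem1GWP}. Since $\dive w = 0$ and $a_{w}(t_{1}) \in B^{\frac{3}{2}}_{2,1}$, my starting point would be the standard Littlewood--Paley based estimate for transport equations in the critical Besov space $B^{\frac{3}{2}}_{2,1}$, which reads, for every $t \in [t_{1},T]$,
$$ \| a_{w}(t) \|_{B^{\frac{3}{2}}_{2,1}} \,\leqslant\, \| a_{w}(t_{1}) \|_{B^{\frac{3}{2}}_{2,1}}\, \exp\!\Bigl(C \int_{t_{1}}^{t} \| \nabla w(s) \|_{B^{\frac{3}{2}}_{2,1}}\, ds\Bigr). $$
By Gr\"onwall's lemma, the whole proposition is thus reduced to the single claim $w \in L^{1}([t_{1},T],B^{\frac{5}{2}}_{2,1})$.

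To extract such a regularity, I would rely on the $H^{2}$-type estimate $B_{2}(T) < +\infty$ delivered by Theorem \ref{theorem1GWP}, which provides in particular $w \in L^{\infty}_{T}(H^{2})$, $\nabla^{2}w \in L^{2}_{T}(L^{6})$, $\nabla^{2}\Pi \in L^{2}_{T}(L^{6})$ and $\nabla\partial_{t}w \in L^{2}_{T}(L^{2})$, together with the two-sided pointwise bound $m \leqslant \rho \leqslant M$. Rewriting the momentum equation as the stationary elliptic problem $-\Delta w + \nabla \Pi = -\rho\,\partial_{t}w - \rho\, w\cdot\nabla w$ and differentiating it once, I would upgrade $w$ to a control in $L^{2}_{T}(H^{3})$ via elliptic regularity, dealing with the product terms by means of the torus Gagliardo--Nirenberg and Sobolev inequalities collected in Section~2 (taking care of the averages). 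Interpolating this $L^{2}_{T}(H^{3})$ bound with the $L^{\infty}_{T}(H^{2})$ one in the Littlewood--Paley scale, along the embedding $(H^{2},H^{3})_{\frac{1}{2},1} \hookrightarrow B^{\frac{5}{2}}_{2,1}$, and applying Cauchy--Schwarz in time, will then yield the required $L^{1}_{T}(B^{\frac{5}{2}}_{2,1})$ estimate on $w$.

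The main obstacle is expected to be the differentiation of that elliptic identity, because it generates the term $\nabla \rho \cdot \partial_{t}w$ whose treatment demands a control on $\nabla \rho$, precisely the regularity we are trying to propagate. I plan to bypass this circularity by a continuation argument: set
$$ T^{*} \eqdefa \sup \Bigl\{\tau \in [t_{1},T] \,:\, a_{w} \in L^{\infty}\bigl([t_{1},\tau], B^{\frac{3}{2}}_{2,1}\bigr)\Bigr\}, $$
and assume by contradiction that $T^{*} < T$. For every $\tau < T^{*}$ the transport inequality above is in force and the elliptic bootstrap on $w$ can legitimately be run, so that $\|a_{w}\|_{L^{\infty}_{\tau}(B^{\frac{3}{2}}_{2,1})}$ stays uniformly bounded as $\tau \uparrow T^{*}$; this contradicts the maximality of $T^{*}$, forcing $T^{*} = T$ and hence the pointwise conclusion $a_{w}(t) \in B^{\frac{3}{2}}_{2,1}$ for every $t \in [0,T]$.
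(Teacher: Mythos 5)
Your transport estimate demands $w \in L^{1}_{T}(B^{\frac{5}{2}}_{2,1})$, and to obtain it you propose an $L^{2}_{T}(H^{3})$ bound on $w$ by differentiating the Stokes-type identity $-\Delta w + \nabla\Pi = -\rho\,\partial_{t}w - \rho\, w\cdot\nabla w$. That route is precisely the one the paper singles out as unworkable (see the remark following (\ref{stokes regularity})): differentiating the right-hand side produces $\nabla\rho\cdot\partial_{t}w$ and $\nabla\rho\,(w\cdot\nabla w)$, and these would have to be fed back into the estimate on $a_{w}$ that you are trying to prove in the first place. You flag the circularity yourself, but the continuation argument you offer does not break it. If you chase the constants, the closed loop reads schematically
$$y(t) \leqslant y(t_{1})\exp\!\Bigl(\int_{t_{1}}^{t}\bigl(h(s) + y(s)\,g(s)\bigr)\,ds\Bigr), \qquad y(t)\eqdefa \|a_{w}(t)\|_{B^{\frac{3}{2}}_{2,1}},$$
where $h, g \in L^{1}_{T}$ come from the energy bounds $B_{1}, B_{2}$ and the term $y\,g$ comes from $\nabla\rho\cdot\partial_{t}w$ (through $\|\nabla a_{w}\|_{L^{3}}\lesssim \|a_{w}\|_{B^{\frac{3}{2}}_{2,1}}$). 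Setting $z=\log y$ gives $z'\lesssim h + g\,e^{z}$, a differential inequality that can blow up in finite time. So showing that $\|a_{w}\|_{L^{\infty}_{\tau}(B^{\frac{3}{2}}_{2,1})}$ stays bounded as $\tau\uparrow T^{*}$ is exactly what fails: the right-hand side is not uniformly controlled in $\tau$, so maximality of $T^{*}$ is not contradicted. A continuation argument only works when the relevant quantity satisfies a uniform a priori bound on $[t_{1},T^{*})$, and you have not produced one.

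The paper sidesteps this entirely by using the sharper commutator estimate (Lemma $2.100$ in \cite{BCDbis}), which replaces $\|\nabla w\|_{B^{\frac{3}{2}}_{2,1}}$ in the Gr\"onwall exponential by $\|\nabla w\|_{L^{\infty}} + \|\nabla w\|_{B^{\frac{1}{2}}_{6,1}}$, after absorbing the $\|\nabla a_{w}\|_{L^{3}}$ factor via the embedding $B^{\frac{3}{2}}_{2,1}\hookrightarrow B^{1}_{3,1}$. Both of those velocity norms are then controlled \emph{solely} by the energy quantities $B_{1}$ and $B_{2}$ from Theorem \ref{theorem1GWP}, using interpolation and the embeddings $L^{2}\hookrightarrow B^{-1}_{6,\infty}$, $L^{6}\hookrightarrow B^{0}_{6,\infty}$: no $H^{3}$ regularity is needed, and no feedback from $a_{w}$ enters the exponential. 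That is the step you are missing. To repair your argument you should abandon the $L^{2}_{T}(H^{3})$ route and instead prove $\nabla w \in L^{1}_{T}(L^{\infty})\cap L^{1}_{T}(B^{\frac{1}{2}}_{6,1})$ directly from the $H^{2}$-level energy bounds, then use the refined commutator estimate rather than the crude $\exp\bigl(C\int\|\nabla w\|_{B^{\frac{3}{2}}_{2,1}}\bigr)$ transport estimate.
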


\begin{proof}
Applying the frequencies localization operator $\Delta_{q}$ on the transport equation, we get 
\begin{equation}
\partial_{t} \Delta_{q}a_{w} + w\cdotp \nabla \Delta_{q} a_{w}  = - \left[ \Delta_{q},w\cdotp \nabla \right]a_{w} .
\end{equation}
\noindent Taking the $L^2$-inner product with $\Delta_{q}a$, the divergence-free condition implies that
\begin{equation}
\begin{split}
\frac{1}{2}\dfrac{d}{dt} \| \Delta_{q}a_{w} \|^{2}_{L^2}
&\leqslant \|\Delta_{q}a_{w}  \|_{L^2} \, \| \left[ \Delta_{q},w\cdotp \nabla \right]a_{w}  \|_{L^2}.
\end{split} 
\end{equation}
\noindent By vertue of Gronwall's Lemma \ref{gronwall} (given in the appendix), we infer that 
\begin{equation}
\begin{split}
2^{\frac{3q}{2}}\, \| \Delta_{q}a_{w}(t) \|_{L^2} &\leqslant 2^{\frac{3q}{2}}\,\| \Delta_{q}a(t_{1})\|_{L^2} +2^{\frac{3q}{2}}\,\int_{0}^{t} \| \left[ \Delta_{q},w\cdotp \nabla \right]a_{w}  \|_{L^2}\, dt'.\\
\end{split}
\end{equation}
\noindent Therefore, by some classical estimate of the commutator (see Lemma $2.100$ in \cite{BCDbis}), we get 
\begin{equation}
\| a_{w}(t) \|_{B^{\frac{3}{2}}_{2,1}} \leqslant \| a(t_{1}) \|_{B^{\frac{3}{2}}_{2,1}} +C\, \int_{0}^{t} \bigl( \| a_{w}(t') \|_{B^{\frac{3}{2}}_{2,1}} \, \| \nabla w(t') \|_{L^{\infty}} + \| \nabla a_{w}(t') \|_{L^{3}} \, \| \nabla w(t') \|_{B^{\frac{1}{2}}_{6,1}} \bigr)dt'.
\end{equation}
\noindent From the following embedding $B^{\frac{3}{2}}_{2,1} \hookrightarrow B^{1}_{3,1}$ which holds in dimension $3$, 
Gronwall Lemma yields
\begin{equation}
\| a_{w}(t) \|_{B^{\frac{3}{2}}_{2,1}} \leqslant \| a(t_{1})  \|_{B^{\frac{3}{2}}_{2,1}}\,\, \exp{\bigl( C\, \int_{0}^{t} \bigl( \| \nabla w(t') \|_{L^{\infty}} + \| \nabla w(t') \|_{B^{\frac{1}{2}}_{6,1}}\bigr) }\bigr)dt' .
\end{equation}
\noindent It remains to be checked that $\ds{\int_{0}^{t}  \| \nabla w(t') \|_{L^{\infty}}  dt'}$ and $\ds{ \int_{0}^{t} \| \nabla w(t') \|_{B^{\frac{1}{2}}_{6,1}}\, dt'}$ exist for any time. This stems from energy method applying on $w$, thanks to Theorem \ref{theorem1GWP}. 
\noindent Concerning the term $\ds{\int_{0}^{t}  \| \nabla w(t') \|_{L^{\infty}} dt'}$, an interpolation argument gives rise to
\begin{equation}
\begin{split}
\int_{0}^{t}  \| \nabla w(t') \|_{L^{\infty}} dt'  &\leqslant \int_{0}^{t}  \| \nabla w(t') \|^{\frac{1}{4}}_{L^{2}} \,   \| \nabla^2 w(t') \|^{\frac{3}{4}}_{L^{6}} dt'\\
& \leqslant \frac{1}{4} \int_{0}^{t}  \| \nabla w(t') \|_{L^{2}} dt' \, + \frac{3}{4} \int_{0}^{t}  \| \nabla^2 w(t') \|_{L^{6}}dt',
\end{split}
\end{equation}
\noindent and thanks to Hölder's inequality, we get
\begin{equation}
\begin{split}
\int_{0}^{t}  \| \nabla w(t') \|_{L^{\infty}} &\leqslant C\,  t^{\frac{1}{2}}\, \bigl( \| \nabla w(t') \|_{L^{2}_{t}(L^2)} \, +   \| \nabla^2 w(t') \|_{L^{2}_{t}(L^6)} \bigr).
\end{split}
\end{equation}
By vertue of Theorem \ref{theorem1GWP}, $\ds{\| \nabla w \|_{L^{2}_{t}(L^2)} \leqslant C\, \| u(t_{1}) \|_{L^2} }$ and $\ds{\| \nabla^2 w \|_{L^{2}_{t}(L^6)} \leqslant C\,  \| u(t_{1}) \|_{H^2}}$, therefore, 
\begin{equation}
\begin{split}
\int_{0}^{t}  \| \nabla w(t') \|_{L^{\infty}} dt' &\leqslant C\, t^{\frac{1}{2}}\, \| u(t_{1}) \|_{H^2}. 
\end{split}
\end{equation}
\noindent Concerning the term $\ds{\int_{0}^{t} \| \nabla w(t') \|_{B^{\frac{1}{2}}_{6,1}} dt'}$, arguments are similar to the others ones and lead us to
\begin{equation}
\begin{split}
\int_{0}^{t} \| \nabla w(t') \|_{B^{\frac{1}{2}}_{6,1}} dt'  &\leqslant \int_{0}^{t}  \| w(t') \|^{\frac{1}{2}}_{B^{-1}_{6,\infty}} \,   \| \ w(t') \|^{\frac{1}{2}}_{B^{2}_{6,\infty}} dt'\\
& \leqslant \frac{1}{2}\, \int_{0}^{t}  \|  w(t') \|_{B^{-1}_{6,\infty}} dt' \, + \frac{1}{2} \int_{0}^{t}  \|  w(t') \|_{B^{2}_{6,\infty}} dt'.\\
\end{split}
\end{equation}
\noindent Notice we have the following embeddings 
\begin{equation}
L^{2} \hookrightarrow B^{-1}_{6,\infty} \quad \hbox{and} \quad L^{6} \hookrightarrow B^{0}_{6,\infty},
\end{equation}
\noindent from which we infer that (thanks to Thereom \ref{theorem1GWP})
\begin{equation}
\begin{split}
\int_{0}^{t} \| \nabla w \|_{B^{\frac{1}{2}}_{6,1}}
&\leqslant \frac{1}{2}\, \int_{0}^{t}  \|  w \|_{L^2} \, +  \frac{1}{2}\, \int_{0}^{t}  \| \nabla^2 w \|_{L^6}\\
&\leqslant\frac{t}{2}\,  \|  w \|_{L^{\infty}_{t}(L^2)} \, +  \frac{1}{2}\,   t^{\frac{1}{2}} \, \| \nabla^2 w \|_{L^{2}_{t}(L^6)}\\
&\leqslant \frac{1}{2}\,  t\,  \|  u(t_{1}) \|_{L^2} \, +  \frac{1}{2}\,   t^{\frac{1}{2}} \, \| u(t_{1}) \|_{H^{2}}.\\
\end{split}
\end{equation}
\noindent Choosing $t$ small enough such that $t \leqslant t^{\frac{1}{2}}$, we get
$$\int_{0}^{t} \| \nabla w(t') \|_{B^{\frac{1}{2}}_{6,1}} dt' \leqslant C\,   t^{\frac{1}{2}} \, \| u(t_{1}) \|_{H^{2}}.$$

\noindent This yields to the desired estimate
\begin{equation}
\label{density function}
\| a_{w}(t) \|_{B^{\frac{3}{2}}_{2,1}} \leqslant \| a(t_{1})  \|_{B^{\frac{3}{2}}_{2,1}}\,\, \exp{\bigl( C\, t^{\frac{1}{2}} \, \| u(t_{1}) \|_{H^{2}}\bigr)}.
\end{equation}
\noindent This concludes the proof on the propagation of the regularity on the density function.

\subsection{Regularity of the velocity field}
\noindent Holding the regularity on the density, we are allowed to apply Lemma \ref{lemma general avec linfini}, which gives rise to the following estimate, available, for any $t \in [0,T]$, where $T$ is a fixed finite time.
\begin{equation}
\label{estimate on velocity w}
\|w \|_{L^{\infty}_{t}(B^{\frac{1}{2}}_{2,1})} + \|w \|_{L^{1}_{t}(B^{\frac{5}{2}}_{2,1})} + \|\nabla \Pi \|_{L^{1}_{t}(B^{\frac{1}{2}}_{2,1})} \leqslant C\, \| u(t_{1}) \|_{B^{\frac{1}{2}}_{2,1}} + C\, \int_{0}^{t} \, \bigl( \|\nabla w(t') \|_{L^{\infty}}\, + \, W(t')  \bigr) \|w(t') \|_{B^{\frac{1}{2}}_{2,1}} dt', 
\end{equation}  
\noindent where $$ W(t') \eqdefa 2^{2m}\, \| a_{w} \|^{2}_{L^{\infty}_{t'}(L^{\infty})} \, +\, 2^{8m}\, \| a_{w}  \|^{4}_{L^{\infty}_{t'}(L^{2})}\, \bigl( 1 +\, \|w\|^{4}_{L^{\infty}_{t'}(B^{\frac{1}{2}}_{2,1})}\, + \| a_{w}  \|^{4}_{L^{\infty}_{t'}(L^{\infty})}   \bigr).$$  
\noindent We deduce from this estimate, by Gronwall Lemma, 
\begin{equation}
\|w \|_{L^{\infty}_{t}(B^{\frac{1}{2}}_{2,1})} + \|w \|_{L^{1}_{t}(B^{\frac{5}{2}}_{2,1})} + \|\nabla \Pi \|_{L^{1}_{t}(B^{\frac{1}{2}}_{2,1})} \leqslant C\, \| u(t_{1}) \|_{B^{\frac{1}{2}}_{2,1}} \, \exp{\bigl(  \|\nabla w \|_{L^{1}_{t}(L^{\infty})} \, + t\,  W(t)\,    \bigr)}.
\end{equation}
\noindent Concerning the term $W(t)$, on the one hand, by the transport equation, we get immediately $$ \|a_{w} (t,\cdotp) \|_{L^{2}} =  \|a_{w}(0,\cdotp) \|_{L^{2}},$$
which is bounded by $\|a_{w}(0) \|_{B^{\frac{3}{2}}_{2,1}}$, since spaces are inhomogeneous. One the other hand, by an interpolation argument, one has
\begin{equation*}
\begin{split}
\|w\|_{B^{\frac{1}{2}}_{2,1}}\, &\leqslant \|w\|^{\frac{1}{2}}_{B^{0}_{2,\infty}}\, \|w\|^{\frac{1}{2}}_{B^{1}_{2,\infty}}\\
&\leqslant \|w\|^{\frac{1}{2}}_{L^2}\, \|w\|^{\frac{1}{2}}_{H^{1}}.\\
\end{split}
\end{equation*}
\noindent It follows that, by vertue of Theorem \ref{theorem1GWP},
\begin{equation}
\label{norme b 1/2 de w, interpolation faite}
\begin{split}
\|w\|_{L^{\infty}_{t}(B^{\frac{1}{2}}_{2,1})}\,  &\leqslant \|w\|^{\frac{1}{2}}_{L^{\infty}_{t}(L^{2})}\, \|w\|^{\frac{1}{2}}_{L^{\infty}_{t}(H^{1})}\\
&\leqslant \| u(t_{1})  \|^{\frac{1}{2}}_{L^{2}}\,  \bigl( \| u(t_{1})  \|^{\frac{1}{2}}_{L^{2}}\, +  \| \nabla u(t_{1})  \|^{\frac{1}{2}}_{L^{2}} \bigr). 
\end{split}
\end{equation}
\noindent It results from these simple computations that the factor $W(t)$ is bounded by
$$\forall t \in [0,T], \,\, W(t) \leqslant C\,  \| u(t_{1})  \|_{H^{2}}.$$
\noindent As it has been already noticed, the term $\|\nabla w \|_{L^{1}_{t}(L^{\infty})}$ satisfies
\begin{equation}
\begin{split}
\int_{0}^{t}  \| \nabla w(t') \|_{L^{\infty}} dt' \, &\leqslant C\, t^{\frac{1}{2}}\, \| u(t_{1}) \|_{H^2}. 
\end{split}
\end{equation}
\noindent It results from all of this, that for any $t \in [0,T]$, we have
\begin{equation}
\|w \|_{L^{\infty}_{t}(B^{\frac{1}{2}}_{2,1})} + \|w \|_{L^{1}_{t}(B^{\frac{5}{2}}_{2,1})} + \|\nabla \Pi \|_{L^{1}_{t}(B^{\frac{1}{2}}_{2,1})} \leqslant C\, \| u(t_{1}) \|_{B^{\frac{1}{2}}_{2,1}} \, \exp{\bigl(  C\,  t^{\frac{1}{2}}\, \| u(t_{1}) \|_{H^2}  \bigr)}. 
\end{equation}
\noindent Combining with the estimate on the density function (\ref{density function}), we get $t \in [0,T]$, for a fixed time $T>0$ 
\begin{equation}
\| a_{w} \|_{L^{\infty}_{t}(B^{\frac{3}{2}}_{2,1})}   + \|w \|_{L^{\infty}_{t}(B^{\frac{1}{2}}_{2,1})} + \|w \|_{L^{1}_{t}(B^{\frac{5}{2}}_{2,1})} + \|\nabla \Pi \|_{L^{1}_{t}(B^{\frac{1}{2}}_{2,1})} \leqslant C\, \| u(t_{1}) \|_{B^{\frac{1}{2}}_{2,1}} \, \exp{\bigl(  C\,  t^{\frac{1}{2}}\, \| u(t_{1}) \|_{H^2}  \bigr)}. 
\end{equation}
\noindent This ends up the proof of Theorem \ref{main theorem}. 
\vskip 0.3cm

\end{proof}

\section{Proof of the local wellposedness part of the main theorem }
\vskip 0.2cm
\noindent This section is devoted to the proof of Theorem \ref{theorem LWP}. We give only the proof of the existence part of the theorem, since the uniqueness part has been already proved in \cite{AGZ}. We only mention the start point of the uniqueness proof.
\subsection{Existence part} 
\noindent The existence proof can be achieved by a regularization process (e.g Fridriech method). The idea is classical : we build smooth approximate solutions, perform uniform estimates on them. A compactness argument leads us to the proof of the existence of a solution of \ref{NSIH avec a}. We skip this part and provide some a priori estimates for smooth enough solution $(a,u)$.\\
\vskip 0.2cm
\noindent Let us start by proving the estimate (\ref{density}) on the density. Applying the frequencies localization operator~$\Delta_{q}$ on the transport equation, we get 
\begin{equation*}
\partial_{t} \Delta_{q}a + u\cdotp \nabla \Delta_{q}a = - \left[ \Delta_{q},u\cdotp \nabla \right]a.
\end{equation*}
\noindent Taking the $L^2$-inner product with $\Delta_{q}a$, the divergence-free condition implies that
\begin{equation*}
\begin{split}
\frac{1}{2}\dfrac{d}{dt} \| \Delta_{q}a\|^{2}_{L^2} &= -\bigl( \left[ \Delta_{q},u\cdotp \nabla \right]a \, \vert \, \Delta_{q}a  \bigr)_{L^2}\\
&\leqslant \|\Delta_{q}a \|_{L^2} \, \| \left[ \Delta_{q},u\cdotp \nabla \right]a \|_{L^2}.
\end{split} 
\end{equation*}
\noindent By vertue of Gronwall's Lemma \ref{gronwall} (given in the appendix), we infer that 
\begin{equation*}
\begin{split}
2^{\frac{3q}{2}}\, \| \Delta_{q}a\|_{L^2} &\leqslant 2^{\frac{3q}{2}}\,\| \Delta_{q}a_{0}\|_{L^2} +2^{\frac{3q}{2}}\, \int_{0}^{t} \| \left[ \Delta_{q},u\cdotp \nabla \right]a \|_{L^2}\, dt'.\\
\end{split}
\end{equation*}
\noindent A classical commutator estimate (see for instance Lemma $2.100$ in \cite{BCDbis}) shows there exists a sequence $(c_{q})$ belonging to $\ell^{1}(\Z)$ such that 
$$ 2^{\frac{3q}{2}}\, \| \left[ \Delta_{q},u\cdotp \nabla \right]a \|_{L^2}\, \leqslant c_{q}\, \| a \|_{B^{\frac{3}{2}}_{2,1}} \, \| u \|_{B^{\frac{5}{2}}_{2,1}},$$
\noindent and therefore, 
\begin{equation*}
2^{\frac{3q}{2}}\, \int_{0}^{t} \| \left[ \Delta_{q},u\cdotp \nabla \right]a \|_{L^2}\, dt' \leqslant \sup_{t}\, c_{q}(t) \, \int_{0}^{t} \| a(t') \|_{B^{\frac{3}{2}}_{2,1}} \, \| u(t') \|_{B^{\frac{5}{2}}_{2,1}} dt'.
\end{equation*}
By summing on $q \in \Z$, we get
\begin{equation*}
\| a\|_{B^{\frac{3}{2}}_{2,1}} \leqslant \| a_{0}\|_{B^{\frac{3}{2}}_{2,1}} +C\, \int_{0}^{t} \| a(t') \|_{B^{\frac{3}{2}}_{2,1}} \, \| u(t') \|_{B^{\frac{5}{2}}_{2,1}}\, dt'. 
\end{equation*}
\noindent The classical Gronwall's Lemma yields the proof of (\ref{density}).
\vskip 0.3cm

\noindent Let us prove estimate (\ref{velocity}) on the velocity. Actually, we prove Lemma \ref{lemma general avec linfini}, which is a bit more general than we want to get. 

\vskip 0.2cm
\noindent \textit{Proof of Lemma \ref{lemma general avec linfini}}. \\We may rewrite the system (\ref{NSIH avec a}), after decomposing $(1+a)$ into\, $\ds{(1+ S_{m}a)\, +\, (a- S_{m}a)}$.  
\begin{equation}
\label{NSIH avec S_{m}a} 
\begin{split} 
 \partial_{t} u + u\cdot\nabla{u} -(1+S_{m}a)\,\Delta{u} + (1+S_{m}a)\,\nabla{\Pi} &=(a- S_{m}a)(\Delta u - \nabla \Pi) \\
\end{split}
\end{equation}
\noindent Notice that $\ds{(1+S_{m}a)\,\nabla{\Pi} \,=\, \nabla \bigl((1+S_{m}a)\,\Pi\bigr) -  \Pi\, \nabla S_{m}a\, }$, which implies
\begin{equation*}
 \partial_{t} u + u\cdot\nabla{u} -(1+S_{m}a)\,\Delta{u} + \nabla \bigl((1+S_{m}a)\,\Pi\bigr) =(a- S_{m}a)(\Delta u - \nabla \Pi) \, + \, \Pi\, \nabla S_{m}a.
\end{equation*}
\vskip 0.2cm
\noindent Let us introduce the notation $\ds{E_{m} \eqdefa (a- S_{m}a)(\Delta u - \nabla \Pi)}$. We reduce the problem to the system below 
\begin{equation}
\label{NSIH avec S_{m}a final} \left \lbrace \begin {array}{ccc}  \partial_{t} u + u\cdot\nabla{u} -(1+S_{m}a)\,\Delta{u} + \nabla \bigl((1+S_{m}a)\,\Pi\bigr) &=& E_{m} \, + \, \Pi\, \nabla S_{m}a.  \\
 \dive u&=&0\\
 (a,u)_{|t=0}&=&(a_{0},u_{0}),\\ \end{array}
\right.
\end{equation}
\vskip 0.5cm
\noindent \textit{Step 1: Frequency localization.}\\
\noindent Applying the operator $\Delta_{q}$ in (\ref{NSIH avec S_{m}a final}), we localize the velocity in a ring, with a size $2^q$, and we get 
\begin{equation*}
\partial_{t}\Delta_{q}u + \Delta_{q}(u\cdotp \nabla u) - \Delta_{q}\bigl((1+S_{m}a)\,\Delta{u}  \bigr) +  \Delta_{q}\bigl(\nabla \bigl((1+S_{m}a)\,\Pi\bigr) \bigr) \,=\, \Delta_{q}E_{m} + \Delta_{q}(\Pi\, \nabla S_{m}a). 
\end{equation*} 
\noindent By definition of the commutator $\ds{\Delta_{q}(u\cdotp \nabla u) \eqdefa u\cdotp \nabla \Delta_{q}u \, + \, \left[ \Delta_{q} , u\cdotp \nabla \right]u }$, this gives
\begin{equation*}
\begin{split}
\partial_{t}\Delta_{q}u + u\cdotp \nabla \Delta_{q}u - \Delta_{q}\bigl((1+S_{m}a)\,\Delta{u}  \bigr) +  \Delta_{q}\bigl(\nabla \bigl((1+S_{m}a)\,\Pi\bigr) \bigr) &=\, -\left[ \Delta_{q} , u\cdotp \nabla \right]u   \,+ \Delta_{q}E_{m}\\ &\qquad + \Delta_{q}(\Pi\, \nabla S_{m}a). 
\end{split}
\end{equation*}
\noindent In particular, a simple computation gives $$ - \Delta_{q}\bigl((1+S_{m}a)\,\Delta{u}  \bigr) = -\dive\bigl((1+S_{m}a)\,\Delta_{q} \nabla u  \bigr) - \dive\bigl(\left[ \Delta_{q}, S_{m}a \right]\nabla u \bigr)+ \Delta_{q}\bigl( \nabla S_{m}a\, \nabla u \bigr).$$
\noindent As a consequence, we get
\begin{equation}
\label{estimate with localization}
\begin{split}
&\partial_{t}\Delta_{q}u + u\cdotp \nabla \Delta_{q}u -\dive\Bigl((1+S_{m}a)\,\Delta_{q} \nabla u\,  +  \Delta_{q}\Bigl(\nabla \bigl((1+S_{m}a)\,\Pi\bigr) \Bigr) =\, -\left[ \Delta_{q} , u\cdotp \nabla \right]u   \,+ \Delta_{q}E_{m}\\ &\qquad \qquad \qquad + \Delta_{q}(\Pi\, \nabla S_{m}a) + \dive \bigl( \left[ \Delta_{q}, S_{m}a \right]\nabla u \bigr) -\, \Delta_{q}\bigl( \nabla S_{m}a\, \nabla u \bigr).
\end{split}
\end{equation}
\vskip 0.2cm
\noindent Let us take the $L^2$ inner product with $\Delta_{q} u$ in the above equation (\ref{estimate with localization}). Because of the divergence free condition, we have
$$\bigl( u\cdotp \nabla \Delta_{q}u \,|\, \Delta_{q}u \bigr)_{L^2} =0 \quad \hbox{and} \quad \bigl( \Delta_{q}\bigl(\nabla \bigl((1+S_{m}a)\,\Pi\bigr) \bigr) \,|\, \Delta_{q} u \bigr)_{L^2} =0.$$ As a result, 
\begin{equation*}
\begin{split}
\frac{1}{2}\dfrac{d}{dt} \|\Delta_{q} u \|^{2}_{L^2} + \int_{\T^3} (1+S_{m}a)\,|\Delta_{q} \nabla u |^2\, dx &\leqslant \| \Delta_{q} u \|_{L^2} \, \Bigl( \|  \left[ \Delta_{q} , u\cdotp \nabla \right]u \|_{L^2} + \| \Delta_{q} E_{m} \|_{L^2} + \| \Delta_{q}(\Pi\, \nabla S_{m}a) \|_{L^2}\\ &\qquad + 2^{q}\, \| \left[ \Delta_{q}, S_{m}a \right]\nabla u \|_{L^2} +  \| \Delta_{q}\bigl( \nabla S_{m}a\, \nabla u \bigr) \|_{L^2}    \Bigr)
\end{split}
\end{equation*} 
\noindent Let us point that $\ds{1+S_{m}a = 1 + a + S_{m}a - a}$. As we assume that $\ds{S_{m}a - a}$ is small enough in norm $L^{\infty}_{t}(B^{\frac{3}{2}}_{2,1})$, it follows that
$$ 1+S_{m}a \geqslant \frac{b}{2},$$
\noindent which along with Lemma \ref{derivée dans les besov}, ensures that
\begin{equation*}
\begin{split}
\frac{1}{2}\dfrac{d}{dt} \|\Delta_{q} u \|^{2}_{L^2} + \frac{b}{2}\, \,2^{2q}\,  \|\Delta_{q} u \|^2_{L^2} &\leqslant \| \Delta_{q} u \|_{L^2} \, \Bigl( \|  \left[ \Delta_{q} , u\cdotp \nabla \right]u \|_{L^2} + \| \Delta_{q} E_{m} \|_{L^2} + \| \Delta_{q}(\Pi\, \nabla S_{m}a) \|_{L^2}\\ &\qquad + 2^{q}\, \| \left[ \Delta_{q}, S_{m}a \right]\nabla u \|_{L^2} +  \| \Delta_{q}\bigl( \nabla S_{m}a\, \nabla u \bigr) \|_{L^2}    \Bigr).
\end{split}
\end{equation*} 
\noindent Applying a Gronwall's argument, we get 
\begin{equation*}
\begin{split}
\dfrac{d}{dt} \|\Delta_{q} u \|_{L^2} + \frac{b}{2}\, \,2^{2q}\,  \|\Delta_{q} u \|_{L^2} &\leqslant  \|  \left[ \Delta_{q} , u\cdotp \nabla \right]u \|_{L^2} + \| \Delta_{q} E_{m} \|_{L^2} + \| \Delta_{q}(\Pi\, \nabla S_{m}a) \|_{L^2}\\ &\qquad + 2^{q}\, \| \left[ \Delta_{q}, S_{m}a \right]\nabla u \|_{L^2} +  \| \Delta_{q}\bigl( \nabla S_{m}a\, \nabla u \bigr) \|_{L^2}.
\end{split}
\end{equation*}
\noindent An integration in time yields
\begin{equation*}
\begin{split}
2^{\frac{q}{2}}\, \|\Delta_{q} u \|_{L^2} +\,C\,b\,2^{\frac{5\,q}{2}} \,\int_{0}^{t} \|\Delta_{q} u \|_{L^2}\,dt' &\leqslant\,\, 2^{\frac{q}{2}}\,\|\Delta_{q} u_{0} \|_{L^2}\,  + \int_{0}^{t}  2^{\frac{q}{2}}\,  \|  \left[ \Delta_{q} , u\cdotp \nabla \right]u \|_{L^2}dt'\,\\ &\qquad + \int_{0}^{t}  2^{\frac{q}{2}}\, \| \Delta_{q} E_{m} \|_{L^2}dt'\, + \int_{0}^{t}  2^{\frac{q}{2}}\, \| \Delta_{q}(\Pi\, \nabla S_{m}a) \|_{L^2}\,dt'\\  &\qquad +\int_{0}^{t}  2^{\frac{3q}{2}}\,  \| \left[ \Delta_{q}, S_{m}a \right]\nabla u \|_{L^2} dt'\, + \int_{0}^{t}  2^{\frac{q}{2}}\,  \| \Delta_{q}\bigl( \nabla S_{m}a\, \nabla u \bigr) \|_{L^2} dt'.
\end{split}
\end{equation*} 
\noindent Taking the supremium in time and then summing on $q \in \Z$ provides us the norm $\ds{\| u \|_{L^{\infty}_{t}(B^{\frac{1}{2}}_{2,1})}}$ and thus
\begin{equation}
\label{estimate2}
\begin{split}
\| u \|_{L^{\infty}_{t}(B^{\frac{1}{2}}_{2,1})}\, +\,C\,b\, \|  u \|_{L^{1}_{t}(B^{\frac{5}{2}}_{2,1})}\, &\leqslant\,\, \| u_{0} \|_{B^{\frac{1}{2}}_{2,1}}\, +\, \| E_{m} \|_{L^{1}_{t}(B^{\frac{1}{2}}_{2,1})}\, + \, \| \Pi\, \nabla S_{m}a \|_{L^{1}_{t}(B^{\frac{1}{2}}_{2,1})}\\     &\qquad + \sum_{q \in \Z} 2^{\frac{q}{2}}\,  \|  \left[ \Delta_{q} , u\cdotp \nabla \right]u \|_{L^{1}_{t}(L^2)}\\  &\qquad + \sum_{q \in \Z}  2^{\frac{3q}{2}}\,  \| \left[ \Delta_{q}, S_{m}a \right]\nabla u \|_{L^{1}_{t}(L^2)} +  \| \nabla S_{m}a\, \nabla u  \|_{L^{1}_{t}(B^{\frac{1}{2}}_{2,1})}.
\end{split}
\end{equation}
\medbreak
\noindent \textit{Step 2: Estimate of each term in the right-hand-side of the above inequality.}\\
\vskip 0.2cm
\noindent $\star$ Estimate of $\| E_{m} \|_{L^{1}_{t}(B^{\frac{1}{2}}_{2,1})}$\\
\noindent Product laws in Besov spaces (cf Lemma \ref{lemma product law} in Appendix) yield
\begin{equation}
\begin{split}
\| E_{m} \|_{L^{1}_{t}(B^{\frac{1}{2}}_{2,1})} &\leqslant C\, \| a - S_{m}a \|_{L^{\infty}_{t}(B^{\frac{3}{2}}_{2,1})}\, \| \Delta u - \nabla \Pi \|_{L^{1}_{t}(B^{\frac{1}{2}}_{2,1})}\\
&\leqslant C\, \| a - S_{m}a \|_{L^{\infty}_{t}(B^{\frac{3}{2}}_{2,1})}\, \Bigl( \|  \nabla \Pi \|_{L^{1}_{t}(B^{\frac{1}{2}}_{2,1})} + \|  u \|_{L^{1}_{t}(B^{\frac{5}{2}}_{2,1})}    \Bigr). 
\end{split} 
\end{equation}

\medbreak
\noindent $\star$ Estimate of $\| \Pi\, \nabla S_{m}a \|_{L^{1}_{t}(B^{\frac{1}{2}}_{2,1})}$. \\Concerning the pressure term, as it is defined up to a constant, we can assume it is mean free. Same remark holds for the term $\| \nabla S_{m}a \|_{B^{1}_{2,2}}$, since obviously the term $  \nabla S_{m}a $ is mean free. In this way, the norms $\| \cdotp\|_{B^{1}_{2,2}} $  and $ \| \cdotp \|_{\dot{B}^{1}_{2,2}}$ are equivalent. By vertue of paradifferential calculus in inhomogeneous Besov norm, we get
\begin{equation}
\begin{split}
\| \Pi\, \nabla S_{m}a \|_{B^{\frac{1}{2}}_{2,1}} &\leqslant C\, \| \Pi\ \|_{B^{1}_{2,2}}  \, \| \nabla S_{m}a \|_{B^{1}_{2,2}}\\
&\leqslant C\,  \| \Pi\ \|_{\dot{B}^{1}_{2,2}}  \, \| \nabla S_{m}a \|_{\dot{B}^{1}_{2,2}}\\
&= C\, \| \nabla \Pi\ \|_{L^2} \, \| \nabla S_{m}a \|_{\dot{H}^{1}},
\end{split} 
\end{equation}
\noindent which leads to 
$$\| \Pi\, \nabla S_{m}a \|_{L^{1}_{t}(B^{\frac{1}{2}}_{2,1})} \leqslant C\, \| \nabla \Pi\ \|_{L^{1}_{t}(L^2)}\,  \| \nabla S_{m}a \|_{L^{\infty}_{t}(\dot{H}^{1})}. $$

\medbreak
\noindent $\star$ Estimate of $\ds{\| \nabla S_{m}a\, \nabla u  \|_{L^{1}_{t}(B^{\frac{1}{2}}_{2,1})}}$. Above arguments still provide\\
\begin{equation}
\begin{split}
\| \nabla S_{m}a\, \nabla u  \|_{B^{\frac{1}{2}}_{2,1}} &\leqslant C\, \| \nabla S_{m}a \|_{B^{1}_{2,2}}  \, \| \nabla u \|_{B^{1}_{2,2}}\\
&\leqslant C\,  \| \nabla u \|_{\dot{H}^1}  \, \| \nabla S_{m}a \|_{\dot{H}^{1}}.\\
\end{split} 
\end{equation}
Therefore, we deduce that 
$$\| \nabla S_{m}a\, \nabla u  \|_{L^{1}_{t}(B^{\frac{1}{2}}_{2,1})} \leqslant C\, \| u \|_{L^{1}_{t}(\dot{H}^2)}  \, \| \nabla S_{m}a \|_{L^{\infty}_{t}(\dot{H}^{1})}.$$

\medbreak
\noindent $\star$ Estimate of $\ds{\sum_{q \in \Z} 2^{\frac{q}{2}}\,  \|  \left[ \Delta_{q} , u\cdotp \nabla \right]u \|_{L^{1}_{t}(L^2)}}$.
\noindent By vertue of commutator estimate, we infer that 
\begin{equation*}
\|  \left[ \Delta_{q} , u\cdotp \nabla \right]u \|_{L^2} \leqslant C\, d_{q}\, 2^{-\frac{q}{2}}\, \| \nabla u \|_{B^{\frac{3}{2}}_{2,1}}\, \| u \|_{B^{\frac{1}{2}}_{2,1}}.  
\end{equation*} 
Therefore, we deduce that 
$$ \sum_{q \in \Z} 2^{\frac{q}{2}}\,  \|  \left[ \Delta_{q} , u\cdotp \nabla \right]u \|_{L^{1}_{t}(L^2)} \leqslant C\, \int_{0}^{t} \| \nabla u(t') \|_{L^{\infty}}\, \| u(t') \|_{B^{\frac{1}{2}}_{2,1}} \,dt'.$$

\medbreak
\noindent $\star$ Estimate of $\ds{\sum_{q \in \Z}  2^{\frac{3q}{2}}\,  \| \left[ \Delta_{q}, S_{m}a \right]\nabla u \|_{L^{1}_{t}(L^2)}}$.
\noindent We can prove the estimate below (see Lemma \ref{derivée dans les besov}) 
\begin{equation}
\sum_{q \in \Z}  2^{\frac{3q}{2}}\,  \| \left[ \Delta_{q}, S_{m}a \right]\nabla u \|_{L^{1}_{t}(L^2)} \leqslant\, C\, 2^{m}\, \| a\|_{L^{\infty}_{t}(L^{\infty})} \, \| u \|_{L^{1}_{t}(B^{\frac{3}{2}}_{2,1})}\, + \,2^{2m}\, \| a\|_{L^{\infty}_{t}(L^{2})} \, \| u \|_{L^{1}_{t}(\dot{H}^{2})}.
\end{equation}
\medbreak
\noindent Plugging all the above estimates in (\ref{estimate2}), we finally get 
\begin{equation}
\label{estimate3}
\begin{split}
\| u \|_{L^{\infty}_{t}(B^{\frac{1}{2}}_{2,1})}\, +\,C\,b\, \|  u \|_{L^{1}_{t}(B^{\frac{5}{2}}_{2,1})}\, &\leqslant\,\, \| u_{0} \|_{B^{\frac{1}{2}}_{2,1}}\, + \| a - S_{m}a \|_{L^{\infty}_{t}(B^{\frac{3}{2}}_{2,1})}\, \Bigl( \|  \nabla \Pi \|_{L^{1}_{t}(B^{\frac{1}{2}}_{2,1})} + \|  u \|_{L^{1}_{t}(B^{\frac{5}{2}}_{2,1})}    \Bigr)\\
&\qquad + \int_{0}^{t} \| \nabla u(t') \|_{L^{\infty}}\, \| u(t') \|_{B^{\frac{1}{2}}_{2,1}}\, dt'\, + \, 2^{m}\, \| a\|_{L^{\infty}_{t}(L^{\infty})} \, \| u \|_{L^{1}_{t}(B^{\frac{3}{2}}_{2,1})}\\
&\qquad + 2^{2m+1}\, \| a\|_{L^{\infty}_{t}(L^{2})} \, \Bigl( \| u \|_{L^{1}_{t}(\dot{H}^{2})} +\, \|  \nabla \Pi \|_{L^{1}_{t}(L^2)}  \Bigr),
\end{split}
\end{equation} 
\noindent where we have used
$$ \| \nabla S_{m}a \|_{L^{\infty}_{t}(\dot{H}^{1})} =  \| \nabla^2 S_{m}a \|_{L^{\infty}_{t}(L^{2})} \leqslant 2^{2m}\, \| a\|_{L^{\infty}_{t}(L^{2})}.$$
\vskip 0.3cm

\noindent \textit{Step 3: Estimate of $\|  \nabla \Pi \|_{L^{1}_{t}(B^{\frac{1}{2}}_{2,1})}$}.\\
\noindent We take the divergence operator in (\ref{NSIH avec S_{m}a}) and thus 
\begin{equation*}
\dive\bigl( (1+S_{m}a)\nabla \Pi \bigr) = -\dive(u\cdotp \nabla u) + \Delta u\cdotp\nabla S_{m}a\, + \dive \Bigl( (S_{m}a-a)(\nabla \Pi - \Delta u)  \Bigr). 
\end{equation*}
\noindent Applying  the operator $\Delta_{q}$ and taking the $L^2$ inner product with $\Delta_{q} \Pi$ yield
\begin{equation*}
\begin{split}
\bigl( \Delta_{q}\Bigl((1+S_{m}a)\nabla \Pi \Bigr) | \Delta_{q} \Pi    \bigr)_{L^2} &=  \bigl( \Delta_{q}\bigl( u\cdotp \nabla u \bigr) | \Delta_{q} \nabla \Pi    \bigr)_{L^2} + \bigl( \Delta_{q}\bigl( \Delta u\cdotp\nabla S_{m}a\, \bigr) | \Delta_{q} \Pi    \bigr)_{L^2}\\ 
&+ \bigl( \Delta_{q}\bigl( (S_{m}a-a)\nabla \Pi  \bigr) | \Delta_{q} \nabla \Pi    \bigr)_{L^2} - \bigl( \Delta_{q}\bigl( (S_{m}a-a)\Delta u  \bigr) | \Delta_{q} \nabla \Pi    \bigr)_{L^2}.  
\end{split}
\end{equation*}
\noindent In particular, the left-hand-side can be rewritten and bounded from below as follows
\begin{equation*}
\begin{split}
\bigl( \Delta_{q}\Bigl((1+S_{m}a)\nabla \Pi \Bigr) | \Delta_{q} \nabla \Pi    \bigr)_{L^2} &= \bigl( \Delta_{q}\nabla \Pi  | \Delta_{q} \nabla \Pi    \bigr)_{L^2} + \bigl( \left[ \Delta_{q}, S_{m}a   \right] \nabla \Pi  | \Delta_{q} \nabla \Pi    \bigr)_{L^2}\\
&\quad + \bigl( S_{m}a\, \Delta_{q}\nabla \Pi  | \Delta_{q} \nabla \Pi  \bigr)_{L^2}\\
&= \bigl( (1+S_{m}a)\, \Delta_{q}\nabla \Pi  | \Delta_{q} \nabla \Pi  \bigr)_{L^2}  + \bigl( \left[ \Delta_{q}, S_{m}a   \right] \nabla \Pi  | \Delta_{q} \nabla \Pi    \bigr)_{L^2}\\
&\geqslant b\, \| \Delta_{q} \nabla \Pi \|^{2}_{L^2} + \bigl( \left[ \Delta_{q}, S_{m}a   \right] \nabla \Pi  | \Delta_{q} \nabla \Pi    \bigr)_{L^2}. 
\end{split}  
\end{equation*}
\noindent It follows
\begin{equation}
\begin{split}
b\, \| \Delta_{q} \nabla \Pi \|^{2}_{L^2} &\leqslant  \| \Delta_{q} \nabla \Pi \|_{L^2} \Bigl( \|\Delta_{q}\bigl( u\cdotp \nabla u \bigr) \|_{L^2} + \| \Delta_{q}\bigl( (S_{m}a-a)\nabla \Pi  \bigr)\|_{L^2}\\ &\qquad + \| \Delta_{q}\bigl( (S_{m}a-a)\Delta u  \bigr)\|_{L^2} + \|  \left[ \Delta_{q}, S_{m}a   \right] \nabla \Pi  \|_{L^2} \Bigr) \\ &+ \| \Delta_{q} \Pi \|_{L^2} \, \| \Delta_{q}\bigl( \Delta u \cdotp \nabla S_{m}a   \bigr) \|_{L^2}.  
\end{split}  
\end{equation}
\noindent In particular, Lemma \ref{derivée dans les besov} provides the inequality below 
$$ \| \Delta_{q} \Pi \|_{L^2} \lesssim 2^{-q}\, \|\Delta_{q} \nabla \Pi\|_{L^2},$$
\noindent which gives rise to
\begin{equation}
\begin{split}
b\, \| \Delta_{q} \nabla \Pi \|_{L^2} &\leqslant   \|\Delta_{q}\bigl( u\cdotp \nabla u \bigr) \|_{L^2} + \| \Delta_{q}\bigl( (S_{m}a-a)\nabla \Pi  \bigr)\|_{L^2}\, + \| \Delta_{q}\bigl( (S_{m}a-a)\Delta u  \bigr)\|_{L^2}\\ &+ \|  \left[ \Delta_{q}, S_{m}a   \right] \nabla \Pi  \|_{L^2}\,  + 2^{-q}\,  \| \Delta_{q}\bigl( \Delta u \cdotp \nabla S_{m}a   \bigr) \|_{L^2}.  
\end{split}  
\end{equation} 
\noindent Multiplying by $2^{\frac{q}{2}}$ and summing on $q \in \Z$, we have
\begin{equation*}
\begin{split}
b\, \| \nabla \Pi \|_{B^{\frac{3}{2}}_{2,1}} &\lesssim \, \| u\cdotp \nabla u \|_{B^{\frac{1}{2}}_{2,1}}                    + \, \| (S_{m}a-a)\nabla \Pi \|_{B^{\frac{1}{2}}_{2,1}} + \| (S_{m}a-a)\Delta u \|_{B^{\frac{1}{2}}_{2,1}}\\
&\quad + \| \Delta u \cdotp \nabla S_{m}a  \|_{B^{-\frac{1}{2}}_{2,1}} + \sum_{q \in \Z} 2^{\frac{q}{2}} \| \left[ \Delta_{q}, S_{m}a   \right] \nabla \Pi \|_{L^2}.                     
\end{split}  
\end{equation*}
\noindent Notice that 
$$ \| \Delta u \cdotp \nabla S_{m}a  \|_{B^{-\frac{1}{2}}_{2,1}} \leqslant C\, \| \nabla S_{m}a \|_{\dot{H}^1}\, \| \Delta u\|_{L^2}.$$
\noindent On the one hand, product laws in Besov spaces (cf Lemma \ref{lemma product law}) give 
$$ \| u\cdotp \nabla u \|_{B^{\frac{1}{2}}_{2,1}} \leqslant \| u \|_{B^{\frac{1}{2}}_{2,1}}\, \| \nabla u \|_{L^{\infty}}\,.$$
$$ \| (S_{m}a-a)\Delta u \|_{B^{\frac{1}{2}}_{2,1}} \leqslant C\, \| (S_{m}a-a) \|_{B^{\frac{3}{2}}_{2,1}} \, \|\Delta u \|_{B^{\frac{1}{2}}_{2,1}}.$$
$$ \| (S_{m}a-a)\nabla \Pi \|_{B^{\frac{1}{2}}_{2,1}} \leqslant C\, \| (S_{m}a-a) \|_{B^{\frac{3}{2}}_{2,1}} \, \|\nabla \Pi \|_{B^{\frac{1}{2}}_{2,1}}.$$
\noindent On the other hand, a classical commutator estimate yields
$$ \sum_{q \in \Z} 2^{\frac{q}{2}} \| \left[ \Delta_{q}, S_{m}a   \right] \nabla \Pi \|_{L^2} \leqslant C\, \| \nabla S_{m}a \|_{\dot{H}^1}\, \|\nabla \Pi\|_{L^2}.$$
\noindent As a result, previous estimates imply  
\begin{equation}
\begin{split}
b\, \|\nabla \Pi \|_{L^{1}_{t}(B^{\frac{1}{2}}_{2,1})} &\lesssim \int_{0}^{t} \| u(t') \|_{B^{\frac{1}{2}}_{2,1}}\, \| \nabla u(t') \|_{L^{\infty}}\, \, dt' +\,  \| (S_{m}a-a) \|_{L^{\infty}_{t}(B^{\frac{3}{2}}_{2,1})} \, \|\Delta u \|_{L^{1}_{t}(B^{\frac{1}{2}}_{2,1})}\\ &+ \, \| (S_{m}a-a) \|_{L^{\infty}_{t}(B^{\frac{3}{2}}_{2,1})} \, \|\nabla \Pi \|_{L^{1}_{t}(B^{\frac{1}{2}}_{2,1})} 
+ \| \nabla S_{m}a \|_{L^{\infty}_{t}(\dot{H}^1)}\, \Bigl( \|\nabla \Pi\|_{L^{1}_{t}(L^2)} + \| \Delta u\|_{L^{1}_{t}(L^2)} \Bigr).  
\end{split}
\end{equation}
\noindent The smallness condition on $\| (S_{m}a-a) \|_{L^{\infty}_{t}(B^{\frac{3}{2}}_{2,1})}$ allows to write
\begin{equation*}
\begin{split}
\frac{b}{2}\, \|\nabla \Pi \|_{L^{1}_{t}(B^{\frac{1}{2}}_{2,1})} &\lesssim \int_{0}^{t} \| u(t') \|_{B^{\frac{1}{2}}_{2,1}}\, \| \nabla u(t') \|_{L^{\infty}}\,\, dt' +\,  \| (S_{m}a-a) \|_{L^{\infty}_{t}(B^{\frac{3}{2}}_{2,1})} \, \| u \|_{L^{1}_{t}(B^{\frac{5}{2}}_{2,1})}\\ &\quad + \| \nabla S_{m}a \|_{L^{\infty}_{t}(\dot{H}^1)}\, \Bigl( \|\nabla \Pi\|_{L^{1}_{t}(L^2)} + \| \Delta u\|_{L^{1}_{t}(L^2)} \Bigr).  
\end{split}
\end{equation*} 
\noindent Obviously, $\ds{ \| \nabla S_{m}a \|_{L^{\infty}_{t}(\dot{H}^1)} \leqslant C\, 2^{2m}\, \| a \|_{L^{\infty}_{t}(L^2)}}$. Therefore, 
\begin{equation}
\label{pressure en norme H1}
\begin{split}
\frac{b}{2}\, \|\nabla \Pi \|_{L^{1}_{t}(B^{\frac{1}{2}}_{2,1})} &\lesssim \int_{0}^{t} \| u(t') \|_{B^{\frac{1}{2}}_{2,1}}\, \| \nabla u(t') \|_{L^{\infty}}\,dt' +\,  \| (S_{m}a-a) \|_{L^{\infty}_{t}(B^{\frac{3}{2}}_{2,1})} \, \| u \|_{L^{1}_{t}(B^{\frac{5}{2}}_{2,1})}\\ &\quad +  2^{2m}\, \| a \|_{L^{\infty}_{t}(L^2)} \, \Bigl( \|\nabla \Pi\|_{L^{1}_{t}(L^2)} + \|  u\|_{L^{1}_{t}(\dot{H}^2)} \Bigr).  
\end{split}
\end{equation}
\noindent This ends up the estimate on the pressure term in $\ds{L^{1}_{t}(B^{\frac{1}{2}}_{2,1})}$-norm. It is left with estimate the pressure term in the  $L^{1}_{t}(L^{2})$-norm, in order to get rid of it in the above estimate, and thus, it is likely to applying with success Gronwall Lemma in the estimate of the velocity term. 

\vskip 0.3cm
\noindent \textit{Step 4: Estimate of $\|  \nabla \Pi \|_{L^{1}_{t}(L^2)}$}.\\
\noindent Once again, we take the divergence in the momentum equation, and the $\dot{H}^{-1}$-norm, so that we get
\begin{equation*}
\begin{split}
\| \dive\bigl( (1+S_{m}a)\nabla \Pi \bigr)\|_{\dot{H}^{-1}} &\leqslant \|\dive(u\cdotp \nabla u)\|_{\dot{H}^{-1}} + \| \Delta u \cdotp \nabla S_{m}a \|_{\dot{H}^{-1}}+ \|\dive \Bigl( (S_{m}a-a)(\nabla \Pi - \Delta u)  \Bigr)\|_{\dot{H}^{-1}}.
\end{split} 
\end{equation*}
\noindent We recall that the smallness condition implies that $\ds{(1+S_{m}a) \geqslant \frac{b}{2}}$ and thus 
$$ b\,  \| \nabla \Pi \|_{L^2} \leqslant\, C\, \| (1+S_{m}a)\nabla \Pi \|_{L^2} \leqslant\, C\, \|u\cdotp \nabla u\|_{L^{2}} + \| \Delta u \cdotp \nabla S_{m}a \|_{\dot{H}^{-1}}\, +\, \|  (S_{m}a-a)(\nabla \Pi - \Delta u) \|_{L^2}.$$ 
Thanks to the smallness condition and product law, we have
\begin{equation}
\begin{split}
 \frac{b}{2}  \| \nabla \Pi \|_{L^2} &\lesssim \|u\|_{L^3}\, \| \nabla u\|_{L^{6}} + \| \Delta u \cdotp \nabla S_{m}a \|_{\dot{H}^{-1}}\,   + \, \| (S_{m}a-a) \Delta u \|_{L^2}.
\end{split} 
\end{equation}
\noindent On the one hand, Gagliardo-Niremberg inequality (notice that average of $\nabla u$ is nul) yields
\begin{equation*}
\begin{split}
 \frac{b}{2}  \| \nabla \Pi \|_{L^2} &\lesssim \|u\|_{L^3}\, \| \nabla^2 u\|_{L^{2}} +  \| \Delta u \cdotp \nabla S_{m}a \|_{\dot{H}^{-1}}\,     + \, \| a\|_{L^{\infty}} \|  \Delta u \|_{L^2}.
\end{split} 
\end{equation*}
\noindent On the other hand, we prove easily thanks to the divergence free condition that
\begin{equation*}
 \| \Delta u \cdotp \nabla S_{m}a \|_{\dot{H}^{-1}}\, \leq C\ \|a\|_{L^{\infty}}\, \| \Delta u \|_{L^2}.
\end{equation*}
\noindent Despite the fact that average of $u$ is not nul, we have  $\ds{\|u\|_{L^3}\, \leqslant C(\rho_{0})\, \|u\|_{B^{\frac{1}{2}}_{2,1}}\, }$. Hence, one has
\begin{equation}
\label{pression en nome L2}
\begin{split}
 \frac{b}{2}  \| \nabla \Pi \|_{L^{1}_{t}(L^2)} &\lesssim \bigl( \|u\|_{L^{\infty}_{t}(B^{\frac{1}{2}}_{2,1})}\, + 2\|a\|_{L^{\infty}_{t}(L^{\infty})} \bigr) \|  u\|_{L^{1}_{t}(\dot{H}^{2})}.
\end{split} 
\end{equation}
\noindent Plugging (\ref{pression en nome L2}) in the estimate (\ref{pressure en norme H1}), we finally get an estimate of the pressure, in which the right-hand side is independent of the pressure: we got rid of the term $\ds{\| \nabla \Pi \|_{L^2}}$. Indeed, (\ref{pressure en norme H1}) becomes
\begin{equation}
\label{pressure en norme H1 bis}
\begin{split}
\frac{b}{2}\, \|\nabla \Pi \|_{L^{1}_{t}(B^{\frac{1}{2}}_{2,1})} &\lesssim \int_{0}^{t} \| u \|_{B^{\frac{1}{2}}_{2,1}}\, \| \nabla u \|_{L^{\infty}}\, dt' +\,  \| (S_{m}a-a) \|_{L^{\infty}_{t}(B^{\frac{3}{2}}_{2,1})} \, \| u \|_{L^{1}_{t}(B^{\frac{5}{2}}_{2,1})}\\ &\quad +  2^{2m}\, \| a \|_{L^{\infty}_{t}(L^2)}\, \|  u\|_{L^{1}_{t}(\dot{H}^{2})}  \, \Bigl( 1 +\, \|u\|_{L^{\infty}_{t}(B^{\frac{1}{2}}_{2,1})}\, + \|a\|_{L^{\infty}_{t}(L^{\infty})}  \Bigr).  
\end{split}
\end{equation} 

\noindent Plugging (\ref{pression en nome L2}) in the estimate  (\ref{estimate3}), we also get
\begin{equation}
\label{estimate3 nouvelle}
\begin{split}
\| u \|_{L^{\infty}_{t}(B^{\frac{1}{2}}_{2,1})}\, +\,C\,b\, \|  u \|_{L^{1}_{t}(B^{\frac{5}{2}}_{2,1})}\, &\leqslant\,\, \| u_{0} \|_{B^{\frac{1}{2}}_{2,1}}\, + \| a - S_{m}a \|_{L^{\infty}_{t}(B^{\frac{3}{2}}_{2,1})}\, \Bigl( \|  \nabla \Pi \|_{L^{1}_{t}(B^{\frac{1}{2}}_{2,1})} + \|  u \|_{L^{1}_{t}(B^{\frac{5}{2}}_{2,1})}    \Bigr)\\
&\qquad + \int_{0}^{t} \| \nabla u(t') \|_{L^{\infty}}\, \| u(t') \|_{B^{\frac{1}{2}}_{2,1}}\, dt'\, + \, 2^{m}\, \| a\|_{L^{\infty}_{t}(L^{\infty})} \, \| u \|_{L^{1}_{t}(B^{\frac{3}{2}}_{2,1})}\\
&\qquad + 2^{2m+1}\, \| a\|_{L^{\infty}_{t}(L^{2})} \, \| u \|_{L^{1}_{t}(\dot{H}^{2})} \Bigl( 1  + \, \|u\|_{L^{\infty}_{t}(B^{\frac{1}{2}}_{2,1})}\, + 2\|a\|_{L^{\infty}_{t}(L^{\infty})}  \Bigr),
\end{split}
\end{equation}

\noindent Suuming (\ref{estimate3 nouvelle}) with (\ref{pressure en norme H1 bis}) and using obvious estimates on the transport equation below
$$ \|a\|_{L^{\infty}_{t}(L^{\infty})} \leqslant \|a_{0}\|_{L^{\infty}} \quad \hbox{and} \quad \|a\|_{L^{\infty}_{t}(L^{2})} \leqslant \|a_{0}\|_{L^{2}},$$
\noindent leads to

\begin{equation*}
\begin{split}
\| u \|_{L^{\infty}_{t}(B^{\frac{1}{2}}_{2,1})}\, +&\,C\,b\, \|  u \|_{L^{1}_{t}(B^{\frac{5}{2}}_{2,1})}\, + \frac{b}{2}\, \|\nabla \Pi \|_{L^{1}_{t}(B^{\frac{1}{2}}_{2,1})} \lesssim\,\, \| u_{0} \|_{B^{\frac{1}{2}}_{2,1}}\\ &+ \| a - S_{m}a \|_{L^{\infty}_{t}(B^{\frac{3}{2}}_{2,1})}\, \bigl( \|  \nabla \Pi \|_{L^{1}_{t}(B^{\frac{1}{2}}_{2,1})} + \|  u \|_{L^{1}_{t}(B^{\frac{5}{2}}_{2,1})}    \bigr)\\
& + \int_{0}^{t} \| \nabla u(t') \|_{L^{\infty}}\, \| u (t')\|_{B^{\frac{1}{2}}_{2,1}}\, dt'\, + \, 2^{m}\, \| a_{0}\|_{L^{\infty}} \, \| u \|_{L^{1}_{t}(B^{\frac{3}{2}}_{2,1})}\\
&+ 2^{2m+1}\, \| a_{0}\|_{L^{2}} \, \| u \|_{L^{1}_{t}(\dot{H}^{2})} \, \bigl( 1  + \, \|u\|_{L^{\infty}_{t}(B^{\frac{1}{2}}_{2,1})}\, + \|a_{0}\|_{L^{\infty}}  \bigr)\\
&+ \| u \|_{L^{1}_{t}(\dot{H}^{2})} \bigl( \, \|u\|_{L^{\infty}_{t}(B^{\frac{1}{2}}_{2,1})}\, + 2\, \|a_{0}\|_{L^{\infty}}  \bigr).
\end{split}
\end{equation*} 
\noindent Once again, the smallness condition simplifies the above estimate 
\begin{equation}
\label{estimate4}
\begin{split}
\| u \|_{L^{\infty}_{t}(B^{\frac{1}{2}}_{2,1})}\, +\,C\,b\, \|  u \|_{L^{1}_{t}(B^{\frac{5}{2}}_{2,1})}\, +& \frac{b}{2}\, \|\nabla \Pi \|_{L^{1}_{t}(B^{\frac{1}{2}}_{2,1})} \lesssim\,\, \| u_{0} \|_{B^{\frac{1}{2}}_{2,1}}\, +\, 2\,\int_{0}^{t} \| u(t') \|_{B^{\frac{5}{2}}_{2,1}}\, \| u(t') \|_{B^{\frac{1}{2}}_{2,1}}\, dt'\\ &+\, \bigl(1+ \, 2^{2m+1}\, \| a_{0} \|_{L^2}\bigr) \,  \|  u\|_{L^{1}_{t}(\dot{H}^{2})}  \, \bigl( 1 +\, \|u\|_{L^{\infty}_{t}(B^{\frac{1}{2}}_{2,1})}\, + \|a_{0}\|_{L^{\infty}}  \bigr)\\
&+ 2^{m}\, \| a_{0}\|_{L^{\infty}} \, \| u \|_{L^{1}_{t}(B^{\frac{3}{2}}_{2,1})}.  
\end{split}
\end{equation}
\noindent Let us recall somme interpolation properties. The following inequalities hold on the torus:
$$ \| u\|_{\dot{H}^2} \leqslant C\, \| u \|^{\frac{1}{4}}_{B^{\frac{1}{2}}_{2,1}}\, \| u \|^{\frac{3}{4}}_{B^{\frac{5}{2}}_{2,1}} \quad \hbox{and} \quad \| u\|_{B^{\frac{3}{2}}_{2,1}} \leqslant C\, \| u \|^{\frac{1}{2}}_{B^{\frac{1}{2}}_{2,1}}\, \| u \|^{\frac{1}{2}}_{B^{\frac{5}{2}}_{2,1}}.$$
\noindent They are due the product laws in Besov spaces (cf Lemma \ref{lemma product law}). For instance, the first one stems from
\begin{equation*}
\| u\|_{\dot{H}^2} = \| \nabla u \|_{\dot{H}^1} \leqslant \| \nabla u \|_{H^1}  \leqslant \| \nabla u \|_{B^{1}_{2,1}} \leqslant C\, \| \nabla u \|^{\frac{1}{4}}_{B^{-\frac{1}{2}}_{2,1}} \, \| \nabla u \|^{\frac{3}{4}}_{B^{\frac{3}{2}}_{2,1}}.
\end{equation*}
\noindent Obviously, by integration in time and thanks to Hölder's inequality, we have
$$ \| u\|_{L^{1}_{t}(\dot{H}^2)} \leqslant C\, \| u \|^{\frac{1}{4}}_{L^{1}_{t}(B^{\frac{1}{2}}_{2,1})}\, \| u \|^{\frac{3}{4}}_{L^{1}_{t}(B^{\frac{5}{2}}_{2,1})} \quad \hbox{and} \quad \| u\|_{L^{1}_{t}(B^{\frac{3}{2}}_{2,1})} \leqslant C\, \| u \|^{\frac{1}{2}}_{L^{1}_{t}(B^{\frac{1}{2}}_{2,1})}\, \| u \|^{\frac{1}{2}}_{L^{1}_{t}(B^{\frac{5}{2}}_{2,1})}.$$
\noindent By vertue of Young's inequalities $$ xy \leqslant \frac{x^{4}}{4} + \frac{3\,y^{\frac{4}{3}}}{4} \quad \hbox{ and} \quad  xy \leqslant \frac{x^{2}}{2} + \frac{y^{2}}{2},$$
\noindent Estimate (\ref{estimate4}) becomes 
\begin{equation*}
\begin{split}
\| u \|_{L^{\infty}_{t}(B^{\frac{1}{2}}_{2,1})}\, &+\,C\,b\, \|  u \|_{L^{1}_{t}(B^{\frac{5}{2}}_{2,1})}\, + \frac{b}{2}\, \|\nabla \Pi \|_{L^{1}_{t}(B^{\frac{1}{2}}_{2,1})} \lesssim\,\, \| u_{0} \|_{B^{\frac{1}{2}}_{2,1}}\, +\, 2\,\int_{0}^{t} \| \nabla u(t') \|_{L^{\infty}}\, \| u(t') \|_{B^{\frac{1}{2}}_{2,1}}\, dt'\\    &\qquad +\, \big(1+ \,  2^{8m}\, \| a_{0} \|^{4}_{L^2} \bigr)\, \|  u\|_{L^{1}_{t}(B^{\frac{1}{2}}_{2,1})}  \, \bigl( 1 +\, \|u\|^{4}_{L^{\infty}_{t}(B^{\frac{1}{2}}_{2,1})}\, + \|a_{0}\|^{4}_{L^{\infty}}  \bigr) \, +\, \frac{b}{4}\, \| u \|_{L^{1}_{t}(B^{\frac{5}{2}}_{2,1})}\\
&\qquad+ \, 2^{2m}\, \| a_{0}\|^{2}_{L^{\infty}} \, \| u \|_{L^{1}_{t}(B^{\frac{1}{2}}_{2,1})}\, +\frac{b}{4} \| u \|_{L^{1}_{t}(B^{\frac{5}{2}}_{2,1})}. 
\end{split}
\end{equation*}
\noindent which can be simplified by
\begin{equation*}
\begin{split}
\| u \|_{L^{\infty}_{t}(B^{\frac{1}{2}}_{2,1})}\, +\,C\,\frac{b}{2}\,  \|  u \|_{L^{1}_{t}(B^{\frac{5}{2}}_{2,1})}\, +& \frac{b}{2}\, \|\nabla \Pi \|_{L^{1}_{t}(B^{\frac{1}{2}}_{2,1})} \lesssim\,\, \| u_{0} \|_{B^{\frac{1}{2}}_{2,1}}\, +\, 2\,\int_{0}^{t} \| \nabla u(t') \|_{L^{\infty}}\, \| u (t')\|_{B^{\frac{1}{2}}_{2,1}}\, dt'\\     &\qquad + \, \big(1+ \,  2^{8m}\, \| a_{0} \|^{4}_{L^2} \bigr)\, \|  u\|_{L^{1}_{t}(B^{\frac{1}{2}}_{2,1})}  \, \bigl( 1 +\, \|u\|^{4}_{L^{\infty}_{t}(B^{\frac{1}{2}}_{2,1})}\, + \|a_{0}\|^{4}_{L^{\infty}}  \bigr)\\ &\qquad+ \, 2^{2m}\, \| a_{0}\|^{2}_{L^{\infty}} \, \| u \|_{L^{1}_{t}(B^{\frac{1}{2}}_{2,1})}.
\end{split}
\end{equation*}
\noindent This concludes the proof of  Lemma \ref{lemma general avec linfini}. \\
\noindent \textit{Continuation of the proof of existence part of Theorem \ref{theorem LWP}}. 
This stems from the obvious fact : $\ds{B^{\frac{3}{2}}_{2,1}  \hookrightarrow  L^{\infty}}$ and thus 
$$   \|  \nabla u\|_{L^{\infty}}  \leqslant \|  \nabla u\|_{B^{\frac{3}{2}}_{2,1}}.$$
\noindent Therefore, we get 
\begin{equation*}
\begin{split}
\| u \|_{L^{\infty}_{t}(B^{\frac{1}{2}}_{2,1})}\, +\,C\,b\, \|  u \|_{L^{1}_{t}(B^{\frac{5}{2}}_{2,1})}\, +& \frac{b}{2}\, \|\nabla \Pi \|_{L^{1}_{t}(B^{\frac{1}{2}}_{2,1})} \lesssim\,\, \| u_{0} \|_{B^{\frac{1}{2}}_{2,1}}\, +\, 2\,\int_{0}^{t} \| u(t') \|_{B^{\frac{5}{2}}_{2,1}}\, \| ut(t') \|_{B^{\frac{1}{2}}_{2,1}}\, dt'\\          &\qquad + \, \big(1+ \,  2^{8m}\, \| a_{0} \|^{4}_{L^2} \bigr)\, \|  u\|_{L^{1}_{t}(B^{\frac{1}{2}}_{2,1})}  \, \bigl( 1 +\, \|u\|^{4}_{L^{\infty}_{t}(B^{\frac{1}{2}}_{2,1})}\, + \|a_{0}\|^{4}_{L^{\infty}}  \bigr)\\ &\qquad+ \, 2^{2m}\, \| a_{0}\|^{2}_{L^{\infty}} \, \| u \|_{L^{1}_{t}(B^{\frac{1}{2}}_{2,1})}.
\end{split}
\end{equation*}

\noindent As a result, we get 
\begin{equation}
\begin{split}
\| u \|_{L^{\infty}_{t}(B^{\frac{1}{2}}_{2,1})}\, +&\,C\, \frac{b}{2}\, \|  u \|_{L^{1}_{t}(B^{\frac{5}{2}}_{2,1})}\, + \frac{b}{2}\, \|\nabla \Pi \|_{L^{1}_{t}(B^{\frac{1}{2}}_{2,1})} \lesssim\,\, \| u_{0} \|_{B^{\frac{1}{2}}_{2,1}}\, +\, 2\,\int_{0}^{t} \| u \|_{B^{\frac{5}{2}}_{2,1}}\, \| u \|_{B^{\frac{1}{2}}_{2,1}}\, dt'\\ &\qquad+ \int_{0}^{t} \| u(t') \|_{B^{\frac{1}{2}}_{2,1}} \Bigl(  2^{2m}\, \| a_{0}\|^{2}_{L^{\infty}} \, +\, \bigl( 1+ \, 2^{8m}\, \|a_{0}\|^{4}_{L^{2}}\bigr)\, \bigl( 1 +\, \|u\|^{4}_{L^{\infty}_{t}(B^{\frac{1}{2}}_{2,1})}\, + \|a_{0}\|^{4}_{L^{\infty}}  \bigr) \Bigr)\,dt'.
\end{split}
\end{equation}
\noindent Let $\varepsilon_{0} >0$. Let us introduce the time $T_{0}$ such that 
$$ T_{0} \, \eqdefa \, \sup\bigl\{0\leqslant  t \leqslant T^{*} \,\, | \,\,  \| u(t) \|_{B^{\frac{1}{2}}_{2,1}}   \leqslant \varepsilon_{0} \bigr\}.$$
\noindent Hence, for any $t \leqslant T_{0}$, we have 
\begin{equation*}
\begin{split}
\| u \|_{L^{\infty}_{t}(B^{\frac{1}{2}}_{2,1})}\, +&\,C\,\frac{b}{2}\, \|  u \|_{L^{1}_{t}(B^{\frac{5}{2}}_{2,1})}\, + \frac{b}{2}\, \|\nabla \Pi \|_{L^{1}_{t}(B^{\frac{1}{2}}_{2,1})} \lesssim\,\, \| u_{0} \|_{B^{\frac{1}{2}}_{2,1}}\, +\, 2\, \varepsilon_{0}\, \int_{0}^{t} \| u(t') \|_{B^{\frac{5}{2}}_{2,1}}\,dt'\\ &\qquad+ \int_{0}^{t} \| u(t') \|_{B^{\frac{1}{2}}_{2,1}}\, \Bigl(  2^{2m}\, \| a_{0}\|^{2}_{L^{\infty}} \, +\, \bigl( 1+ \, 2^{8m}\, \|a_{0}\|^{4}_{L^{2}}\bigr)\, \bigl( 1 +\, \varepsilon_{0}^{4}\, + \|a_{0}\|^{4}_{L^{\infty}}  \bigr) \Bigr)\,dt'.
\end{split}
\end{equation*}
\noindent Choosing $\varepsilon_{0}$ small enough, namely $\ds{\varepsilon_{0} \leqslant \frac{C\, b}{4}}$, Gronwall lemma implies that for any $t \leqslant T_{0}$, 
\begin{equation}
\begin{split}
\| u \|_{L^{\infty}_{t}(B^{\frac{1}{2}}_{2,1})}\, +\,\frac{C\, b}{4}\, \|  u \|_{L^{1}_{t}(B^{\frac{5}{2}}_{2,1})}\, +& \frac{b}{2}\, \|\nabla \Pi \|_{L^{1}_{t}(B^{\frac{1}{2}}_{2,1})} \lesssim\,\, \| u_{0} \|_{B^{\frac{1}{2}}_{2,1}}\\ &\times \exp{ \big((T_{0}\, \bigl(  2^{2m}\, \| a_{0}\|^{2}_{L^{\infty}} \, +\, \bigl( 1+ \, 2^{8m}\, \|a_{0}\|^{4}_{L^{2}}\bigr)\, \bigl( 1 +\, {\bigl(\frac{b}{4}\bigr)}^{4}\, + \|a_{0}\|^{4}_{L^{\infty}}  \bigr) \bigr)}.
\end{split}
\end{equation}
\noindent As a result, we get the a priori estima on the velocity 
\begin{equation}
\begin{split}
\hbox{For any} \quad t \leqslant T_{0},\quad \| u \|_{L^{\infty}_{t}(B^{\frac{1}{2}}_{2,1})}\, +\,\|  u \|_{L^{1}_{t}(B^{\frac{5}{2}}_{2,1})}\, +& \frac{b}{2}\, \|\nabla \Pi \|_{L^{1}_{t}(B^{\frac{1}{2}}_{2,1})} \leqslant\,C\, \| u_{0} \|_{B^{\frac{1}{2}}_{2,1}}.\\
\end{split}
\end{equation}
\noindent This concludes the proof of (\ref{velocity}) : until the (small) time $T_{0}$, the solution is controlled by initial data, up to a multiplicative constant.
\noindent This ends up the proof of the local-existence part of Theorem \ref{theorem LWP}. 

\subsection{Uniqueness part}
\noindent The uniqueness part has been already done in \cite{AGZ}. We refer the reader to it for more details. Let us recall some details. Let $(a_{1},u_{1},\nabla \Pi_{1})$ and $(a_{2},u_{2},\nabla \Pi_{2})$ be two solutions of the system (\ref{NSIH avec a}), satisfying the smallness hypothesis $\| a - S_{m}a \|_{B^{\frac{3}{2}}_{2,1}} \leqslant c$ and such that 
\begin{equation}
(a_{i},u_{i},\nabla \Pi_{i}) \in \,\, \mathcal{C}([0,T],B^{\frac{3}{2}}_{2,1})\, \times\, \mathcal{C}([0,T],B^{\frac{1}{2}}_{2,1})\,\cap\, L^{1}([0,T],B^{\frac{5}{2}}_{2,1}) \times\, L^{1}([0,T],B^{\frac{1}{2}}_{2,1})\cdotp
\end{equation}
\noindent We define as one expects 
$$ (\delta a, \delta u, \nabla \delta \Pi) \eqdefa (a_{2}- a_{1}, u_{2}-u_{1}, \nabla \Pi_{2} - \nabla \Pi_{1} ),$$
\noindent so that  $(\delta a, \delta u, \nabla \delta \Pi) $ solves the following system
\begin{equation}
\label{NSIH unicite} \left \lbrace \begin {array}{ccc}  \partial_{t} \delta a\, + u_{2}\cdotp\nabla{\delta a} &=& -\delta u \cdot \nabla a_{1}\\ \partial_{t} \delta u + u_{2}\cdot\nabla{\delta u} -(1+a_{2})\, (- \nabla{ \delta \Pi} -+\Delta{\delta u} )&=&- \delta u\cdot \nabla u_{1} + \delta a (\Delta u_{1} - \nabla \Pi_{1})\\
 \dive \delta u&=&0\\
 (\delta a,\delta u)_{|t=0}&=&(0,0).\\ \end{array}
\right.
\end{equation}
\noindent We prove that such solution of this system satisifies
\begin{equation}
(\delta a, \delta u, \nabla \delta \Pi) \in ,\, \mathcal{C}([0,T],B^{\frac{3}{2}}_{2,1})\, \times\, \mathcal{C}([0,T],B^{-\frac{1}{2}}_{2,1})\,\cap\, L^{1}([0,T],B^{\frac{3}{2}}_{2,1}) \times\, L^{1}([0,T],B^{-\frac{1}{2}}_{2,1})\cdotp
\end{equation}
\begin{remark}
\noindent Notice that, owing to the presence of a transport equation, we loose one derivative  in the estimate involving $\delta a$. 
\end{remark}

\section{Proof of the global wellposedness part of the main theorem}
\noindent This section is devoted to the proof of Theorem \ref{theorem1GWP}, which provides the global property of the main Theorem \ref{main theorem}. 

\begin{equation} \left \lbrace \begin {array}{ccc} \label{system NSI 3}  \partial_{t} \rho\, + u\cdotp\nabla{\rho} &=& 0\\ \rho(\partial_{t} u + u\cdot\nabla{u})  - \Delta{u} + \nabla{\Pi}&=&0\\
 \dive u&=&0\\
 (\rho,u)_{|t=0}&=&(\rho_{0},u_{0}).\\ \end{array}
\right.
\end{equation}
\noindent In a sake of simplicity, we skip the regularisation process (Friedrich methods) and we only present the a priori estimates for smooth enough solution $(\rho,u)$, which provide the existence part of Theorem \ref{theorem1GWP}. Concerning the uniqueness part, we refer the reader to the paper of M. Paicu, P. Zhang and Z. Zhang (see \cite{PZZ}). We underline that Lagragian coordinates are necessary to prove the uniqueness, owing to the very low regularity hypothesis on the density( which is only supposed to be bounded from above and from below). Let us proceed firstly to an $L^2$-energy estimate, which leads to the result on $B_{0}$. Then we will get estimate on $B_{1}$, thanks to an $H^1$-energy estimate. 
\vskip 0.2cm
\noindent $\bullet$\,\, Proof of (\ref{L2 energy estimate}). Taking the $L^2$ inner product of momentum equation with $u$ in the system (\ref{system NSI 3}), we get : 
\begin{equation*}  
\bigl({\, \rho(\partial_{t} u + u\cdot\nabla{u}) \,\vert \, u\,}\bigr)_{L^2} - \bigl({\,  \Delta{u} \,\vert \, u\,}\bigr)_{L^2} + 0 = 0.
\end{equation*}
\noindent We check that $\dis{\bigl({\, \rho(\partial_{t} u + u\cdot\nabla{u}) \, \vert \, u\,}\bigr)_{L^2} = \frac{1}{2} \dfrac{d}{dt}\| \sqrt{\rho} u \|^2_{L^2}}$. \\ 
\noindent This stems from the computations below
\begin{equation*}
\begin{split}
\bigl({\, \rho(\partial_{t} u + u\cdot\nabla{u}) \,\vert \, u\,}\bigr)_{L^2} &= \frac{1}{2}\, \int_{\T^3} \rho \, \partial_{t} |u|^2 \, dx\, + \frac{1}{2}\int_{\T^3} \rho \,\, u\cdotp\nabla {|u|^2}\, dx\\
&= \frac{1}{2} \dfrac{d}{dt} \int_{\T^3} \rho \,|u|^2 -\frac{1}{2}\, \int_{\T^3} \partial_{t}\rho\, |u|^2 \, dx + \frac{1}{2}\int_{\T^3} \rho \,\, u\cdotp\nabla {|u|^2}\, dx.\\
\end{split}
\end{equation*}
\noindent However, $\ds{ \int_{\T^3} \rho \, u\cdotp\nabla {|u|^2} = - \int_{\T^3}  \, (u\cdotp\nabla{\rho}) |u|^2}$. Therefore, the transport equation yields
\begin{equation*}
\begin{split}
\bigl({\, \rho(\partial_{t} u + u\cdot\nabla{u}), u\,}\bigr)_{L^2}
&= \frac{1}{2} \dfrac{d}{dt} \int_{\T^3} \rho \,|u|^2 -\frac{1}{2}\, \int_{\T^3} (\partial_{t}\rho + u\cdotp\nabla{\rho})  \, |u|^2 \\
&= \frac{1}{2} \dfrac{d}{dt} \int_{\T^3} \rho \,|u|^2.
\end{split}
\end{equation*}
Finally, an integration in time provides the desired estimate
\begin{equation}
\frac{1}{2} \| \sqrt{\rho} u(t)\|^{2}_{L^2} +  \int_{0}^{t} \| \nabla{u}(t')\|^{2}_{L^2} \,dt'= \frac{1}{2} \| \sqrt{\rho_{0}} u_{0}\|^{2}_{L^2}.
\end{equation}
\noindent This concludes the proof of (\ref{L2 energy estimate}). Now let us proceed to the proof of (\ref{H1 energy estimate}).

\vskip 0.2cm
\noindent $\bullet$\,\, Proof of (\ref{H1 energy estimate}). The idea is the same as the previous one : we take the $L^2$ inner product of momentum equation with $\partial_{t} u$ in the system (\ref{system NSI 3}), we get :
\begin{equation*}
\begin{split}  
\bigl({\, \sqrt{\rho}\,\partial_{t}u \,\vert \,\sqrt{\rho}\partial_{t} u\,}\bigr)_{L^2} + \bigl({\, \sqrt{\rho} u\cdot\nabla{u} \,\vert \,\sqrt{\rho}\partial_{t} u\,}\bigr)_{L^2}\, + \frac{1}{2}\,\dfrac{d}{dt}\| \nabla{u}(t)\|^{2}_{L^2}  &= 0,
\end{split}
\end{equation*}
\noindent which leads to 
\begin{equation}
\begin{split} 
\frac{1}{2}\,\dfrac{d}{dt}\| \nabla{u}(t)\|^{2}_{L^2}\, +\, \|  \sqrt{\rho}\,\partial_{t}u(t)  \|^{2}_{L^2} &\leqslant \| \sqrt{\rho} u\cdot\nabla{u} (t)\|_{L^2}\, \| \sqrt{\rho}\partial_{t} u (t)\|_{L^2}\\
&\leqslant \| \sqrt{\rho} u(t) \|_{L^6}\, \| \nabla{u}(t) \|_{L^3}\, \| \sqrt{\rho}\partial_{t} u (t)\|_{L^2}\\
\end{split}
\end{equation}

\noindent Applying Proposition \ref{application1 lemme average} on the term $\| u(t) \|_{L^6}\,$  and Proposition \ref{GN} on the term $\| \nabla{u}(t) \|_{L^3}$ gives rise to
\begin{equation}
\label{inegalit\'e cl\'e}
\begin{split}  
\frac{1}{2}\,\dfrac{d}{dt}\| \nabla{u}(t)\|^{2}_{L^2}\, +\, \|  \sqrt{\rho}\,\partial_{t}u (t) \|^{2}_{L^2} 
 &\leqslant C(\rho_{0})\,  \| \nabla{ u}(t) \|_{L^2}\, \| \nabla{u}(t) \|^{\frac{1}{2}}_{L^2}\, \| \nabla^2{u}(t) \|^{\frac{1}{2}}_{L^2}\,\,  \| \sqrt{\rho}\partial_{t} u (t)\|_{L^2}\\
& \leqslant C(\rho_{0})\,  \| \nabla{u}(t) \|^{\frac{3}{2}}_{L^2}\, \| \nabla^2{u}(t) \|^{\frac{1}{2}}_{L^2}\,\,  \| \sqrt{\rho}\partial_{t} u(t) \|_{L^2}.\\
\end{split}
\end{equation}

\noindent Then, Young inequality yields 
\begin{equation}
\label{inegalit\'e cl\'e}
\begin{split}  
\frac{1}{2}\,\dfrac{d}{dt}\| \nabla{u}(t)\|^{2}_{L^2}\, +\, \frac{1}{2}\,  \|  \sqrt{\rho}\,\partial_{t}u(t)  \|^{2}_{L^2}  
& \leq  \frac{1}{2}\, C(\rho_{0})\,  \| \nabla{u}(t) \|^{3}_{L^2}\, \| \nabla^2{u}(t) \|_{L^2}.
\end{split}
\end{equation}

\noindent We have to estimate the term $ \| \nabla^2{u} \|_{L^2}$. Applying the   $L^2$-norm in the momentum equation, we get
\begin{equation*}
\begin{split}
 \|\nabla^2{u}(t)\|_{L^2} + \|\nabla{\Pi}(t)\|_{L^2} &\leqslant \|\rho(t)\|^{\frac{1}{2}}_{L^\infty}\Bigl( \| \sqrt{\rho}\partial_{t} u(t)\|_{L^2} + \| \sqrt{\rho} u(t)\|_{L^6}\| \nabla{u}\|_{L^3}  \Bigr).
\end{split}
\end{equation*}
Once again, by vertue of Proposition \ref{application1 lemme average} and  Gagliardo-Niremberg inequality, one has
\begin{equation*}
\begin{split}
 \|\nabla^2{u}\|_{L^2} + \|\nabla{\Pi}\|_{L^2} &\leqslant  C(\rho_{0})\, \Bigl( \| \sqrt{\rho}\partial_{t} u(t)\|_{L^2} + \,   \| \nabla{u}(t) \|_{L^2}\, \| \nabla{u}(t)\|^{\frac{1}{2}}_{L^2}\,\, \| \nabla^2{u}(t)\|^{\frac{1}{2}}_{L^2}  \Bigr).\\
\end{split}
\end{equation*}
Young inequality implies  
\begin{equation}
\label{inequation 3}
\frac{1}{2} \|\nabla^2{u}(t)\|_{L^2} + \|\nabla{\Pi}\|_{L^2} \leqslant C(\rho_{0})\, \| \sqrt{\rho}\partial_{t} u(t)\|_{L^2} +  \frac{1}{2}\, \| \nabla{u}(t)\|^3_{L^2}.
\end{equation}
\noindent Plugging Inequality (\ref{inequation 3}) in (\ref{inegalit\'e cl\'e}) and applying Young inequality gives
\begin{equation}
\label{inequation totale}
\begin{split}
\frac{1}{2}\,\dfrac{d}{dt}\| \nabla{u}(t)\|^{2}_{L^2}\, +\, \frac{1}{2}\|  \sqrt{\rho}\,\partial_{t}u(t)  \|^{2}_{L^2} &\leqslant
 \, C(\rho_{0})\,  \| \nabla{u}(t) \|^{6}_{L^2}\, + \frac{1}{4}\|  \sqrt{\rho}\,\partial_{t}u (t) \|^{2}_{L^2}.
\end{split}
\end{equation}
As a result, we have : 
\begin{equation}
\label{inequation totale bis}
\begin{split}
\frac{1}{2}\,\dfrac{d}{dt}\| \nabla{u}(t)\|^{2}_{L^2}\, +\, \frac{1}{4}\|  \sqrt{\rho}\,\partial_{t}u(t)  \|^{2}_{L^2} &\leqslant\, C(\rho_{0})\ \| \nabla{u}(t) \|^{6}_{L^2}.
\end{split}
\end{equation}
We sum (\ref{inequation totale bis}) and (\ref{inequation 3}) and we get : 
\begin{equation*}
\begin{split}
\frac{1}{2}\,\dfrac{d}{dt}\| \nabla{u}(t)\|^{2}_{L^2}\, +\, \frac{1}{4}\|  \sqrt{\rho}\,\partial_{t}u(t)  \|^{2}_{L^2} +\,  \|\nabla^2{u}(t)\|^{2}_{L^2} + \|\nabla{\Pi}(t)\|^{2}_{L^2}\, &\leqslant C(\rho_{0})\, \bigl(\| \nabla{u}(t) \|^{6}_{L^2} \\&+ \frac{1}{8} \, \| \sqrt{\rho}\partial_{t} u(t)\|^2_{L^2} +  \, \| \nabla{u}(t)\|^6_{L^2}\bigr).
\end{split}
\end{equation*}
Finally, we have by integration in time 
\begin{equation}
\label{inequation totale integr\'ee}
\begin{split}
\frac{1}{2}\, \| \nabla{u}(t)\|^{2}_{L^2}\, +\, \int_{0}^{t} \Bigl(  \frac{1}{8} \, \| \sqrt{\rho}\,\partial_{t}u(t')  \|^{2}_{L^2} +\,\|\nabla^2{u}(t')\|^{2}_{L^2} + \|\nabla{\Pi}(t')\|^{2}_{L^2}\Bigr) \,dt'\, &\leqslant \,  \,\frac{1}{2}\, \| \nabla{u_{0}}\|^{2}_{L^2}\\ &+\,  C(\rho_{0}) \,\int_{0}^{t} \| \nabla{u}(t')\|^6_{L^2}\, dt'.
\end{split}
\end{equation}
\noindent Let us focus for a while on the term $\ds{\int_{0}^{t} \| \nabla{u}(t')\|^6_{L^2} \, dt'}$.
\noindent It seems clear that 
\begin{equation*}
\int_{0}^{t} \| \nabla{u}(t')\|^6_{L^2} dt' \leqslant \, \| \nabla{u}\|^4_{L^{\infty}_{t}(L^2)} \, \int_{0}^{t} \| \nabla{u}(t')\|^2_{L^2}dt',
\end{equation*}
\noindent which leads to, by vertue of (\ref{H1 energy estimate}) and définition of $B_{1}$
\begin{equation*}
\int_{0}^{t} \| \nabla{u}(t')\|^6_{L^2} dt' \leqslant \,  \, \| u_{0}\|^{2}_{L^2}\, B^{2}_{1}(t).
\end{equation*}
\noindent Finally, we get
\begin{equation*}
 B_{1}(t) \leqslant \,\frac{1}{2}\, \| \nabla{u_{0}}\|^{2}_{L^2}\, +\,  C(\rho_{0}) \,\| u_{0}\|^{2}_{L^2}\, B^{2}_{1}(t).
\end{equation*}
\noindent As long as the smallness condition on $u_{0}$ is satisfied, we obtain Estimate (\ref{H1 energy estimate}), which conclude the proof of this estimate.
\vskip 0.2cm

\noindent $\bullet$ Proof of (\ref{H2 energy estimate}). Firstly, we derive the momentum equations, with respect to the time $t$. Then, we take the $L^2$ inner product with $\partial_{t} u$. 
\vsd
\noindent The derivated momentum equation is given by the following formula : 
\begin{equation*}
\begin{split}
\bigl({\, \rho \,\partial_{tt} u\, \vert\,  \partial_{t} u\,}\bigr)_{L^2} - \bigl({\,  \Delta{\partial_{t} u}\, \vert\,  \partial_{t} u\,}\bigr)_{L^2} &= -\bigl({\, \partial_{t} \rho\,\, (\partial_{t} u + u\cdotp \nabla{u}) \, \vert\,  \partial_{t} u\,}\bigr)_{L^2} - \bigl({\, \rho \, \partial_{t} u \cdot\nabla{u} \, \vert\, \partial_{t} u\,}\bigr)_{L^2}\\ &- \bigl({\, \rho \, u \cdot\nabla{\partial_{t} u} \, \vert\,  \partial_{t} u\,}\bigr)_{L^2}.
\end{split} 
\end{equation*}
By hypothesis on the density, the left-hand side can be bounded from below by :
\begin{equation*}
\begin{split}
\frac{m}{2}\, \dfrac{d}{dt} \bigl(\, \| \partial_{t} u \|^{2}_{L^2} \bigr)  + \, \, \|\nabla{\partial_{t} u}\|^{2}_{L^2}  &\leq \frac{m}{2}\,\| \partial_{t} u \|^{2}_{L^2}\, - \bigl({\, \rho \, \, \partial_{t} u \cdot\nabla{u}\,\vert\, \partial_{t} u\,}\bigr)_{L^2}\,  - \bigl({\, \rho \,\, u \cdotp\nabla{\partial_{t} u}\, \vert\, \partial_{t} u\,}\bigr)_{L^2}\\ &-\bigl({\,\, \partial_{t} \rho\,\, \partial_{t} u \, \vert\, \partial_{t} u\,}\bigr)_{L^2}\,  -\bigl({\,\, \partial_{t} \rho\,\, u\cdotp \nabla{u}) \,\vert\, \partial_{t} u\,}\bigr)_{L^2}.
\end{split}   
\end{equation*}

\noindent Let us point out that $\ds{\bigl({\, \rho \,\, u \cdotp\nabla{\partial_{t} u}\, \vert\,  \partial_{t} u\,}\bigr)_{L^2}}$ is in fact nul, by vertue of the divergence free condition.

\noindent Taking the modulus, applying triangular inequality and finally, using the mass equation on the density: 
\begin{equation}
\label{estimatederiv\'ee}
\begin{split}
\frac{m}{2}\, \dfrac{d}{dt} \bigl(\, \| \partial_{t} u \|^{2}_{L^2} \bigr)  + \, \, \|\nabla{\partial_{t} u}\|^{2}_{L^2}  &\leq \frac{m}{2}\,\| \partial_{t} u \|^{2}_{L^2}\, +\, \int_{\T^3} \Bigl| \, \rho \, (\partial_{t} u \cdot\nabla{u})\, \partial_{t} u \Bigr|\, dx  \\ &+ \Bigl| \bigl({\, \,\dive(\rho\,u)\,  \, \vert\, (\partial_{t} u)^2\,}\bigr)_{L^2}\Bigr|  + \Bigl|\bigl({\,\, \dive(\rho\,u)\, u\cdotp \nabla{u}) \, \vert\,  \partial_{t} u\,}\bigr)_{L^2}\Bigr|   \\
&\leqslant \sum_{k=1}^6 \, I_{k}(t),\\
\end{split}
\end{equation}
\noindent with
\begin{equation}
\begin{split}
I_{1}(t) &= \frac{m}{2}\,\| \partial_{t} u \|^{2}_{L^2}\, dx,\\
I_{2}(t)  &=\int_{\T^3} \Bigl| \rho \,\, (\partial_{t} u \cdot\nabla{u})\, \partial_{t} u \Bigr|\, dx, \\
I_{3}(t)  &= 2\int_{\T^3}\Bigl| \rho\,\, u\, \nabla({\partial_{t} u})\, {\partial_{t} u}\Bigr|\, dx,\\
I_{4}(t)  &= \int_{\T^3}\Bigr|\, \rho\, ((u\, \cdotp \nabla u)\,\cdotp \nabla u)\, \cdotp\,  \partial_{t} u\Bigr|\, dx,\\ 
I_{5}(t)  &= \int_{\T^3}\Bigr| \,\rho\, ((u\,\otimes u) : \nabla^{2}) u\,\cdotp \partial_{t} u \Bigr| \, dx,\\
I_{6}(t)  &= \int_{\T^3}\Bigr| \rho\,\, ( u\cdotp \nabla u)\,\cdot (u \cdot \nabla(\partial_{t} u)) \Bigr| \, dx.\\
\end{split}
\end{equation}

\vsd
\noindent As far as $I_{2}(t) $ is concerned, firstly we apply H\"older's inequality and we get  
\begin{equation}
\begin{split}
I_{2}(t)  &= \int_{\T^3}\bigl| \rho \,\, (\partial_{t} u \cdot\nabla{u})\, \partial_{t} u \bigr| \, dx\\
&\leq  M\, \,\|  \partial_{t} u(t) \|_{L^2}\, \|  \partial_{t} u(t) \|_{L^6}\, \|  \nabla u(t) \|_{L^3}.\\
\end{split}
\end{equation}
Once again, classical Sobolev embedding can not be applied directly to the term $\|  \partial_{t} u(t) \|_{L^6}$. We shall consider the term $\overline{\partial_{t} u(t)}$ and adapt Lemma \ref{average}. Firstly, notice that $\dis{\int_{\T^3} \rho(t,x)\, \partial_{t} u(t,x)  \,dx =0}$, due to an integration of the momentum equation in (\ref{system NSI 3})). Hence, \textit{the average method} gives rise to the following computation
\begin{equation*}
\begin{split}
\int_{\T^3} (\rho(t,x) - \bar{\rho}(t)) \, (\partial_{t} u(t,x) - \overline{\partial_{t} u(t)}) \, dx &= \int_{\T^3}   \rho(t,x) \, \partial_{t} u(t,x)\ dx \, \, - \, \bar{\rho}(t)\, \overline{\partial_{t} u(t)}. 
\end{split}
\end{equation*}
\noindent By vertue of remarks \ref{argument 1} and \ref{argument 3}, one has
\begin{equation*}
\vert \,\overline{\partial_{t} u(t)}\vert \, \leqslant\, \frac{1}{\bar{\rho_{0}}} \,  \|  \rho_{0} - \bar{\rho_{0}} \|_{L^2}  \, \|  \partial_{t} u (t) - \overline{\partial_{t} u(t)} \|_{L^2},
\end{equation*}
\noindent which gives, thanks to Poincaré-Wirtinger 
\begin{equation*}
\vert \,\overline{\partial_{t} u(t)} \vert \, \leqslant\, \frac{1}{\bar{\rho_{0}}} \,  \|  \rho_{0} - \bar{\rho_{0}} \|_{L^2}  \, \|  \nabla \partial_{t} u (t) \|_{L^2}.
\end{equation*}
Therefore, we deduce from the above computation that 
$$ \| \partial_{t} u (t)\|_{L^6} \leqslant \| \partial_{t} u (t) - \overline{\partial_{t} u(t)} \|_{L^6} + \, \vert \,\overline{\partial_{t} u(t)}\vert \,  \leqslant  C(\rho_{0})\, \|\nabla {\partial_{t} u(t)}\|_{L^2}.$$ 
\noindent Thanks to Gagliardo-Niremberg and Young inequalities, we infer that 
\begin{equation}
\begin{split}
I_{2}(t) & \leq  \, C(\rho_{0})\, \|  \partial_{t} u(t) \|_{L^2}\, \|  \nabla{\partial_{t} u(t)} \|_{L^2}\, \|  \nabla u(t) \|^{\frac{1}{2}}_{L^2} \|  \nabla^2 u(t) \|^{\frac{1}{2}}_{L^2}\\
&\leq  C(\rho_{0})\,\,  \|  \partial_{t} u(t) \|^2_{L^2}\,  \|  \nabla u(t) \|_{L^2}\, \|  \nabla^2 u(t) \|_{L^2}  + \frac{1}{4}\|  \nabla{\partial_{t} u(t)} \|^2_{L^2}\,\\
&\leq C(\rho_{0})\,\, \|  \partial_{t} u(t) \|^2_{L^2}\, \Bigl( \|  \nabla u(t) \|^2_{L^2}\, +\, \|  \nabla^2 u(t) \|^2_{L^2}\Bigr)  + \frac{1}{4}\|  \nabla{\partial_{t} u(t)} \|^2_{L^2}.
\end{split}
\end{equation}

\noindent Concerning estimate of $I_{3}(t) $, we get
\begin{equation}
\begin{split}
I_{3}(t)  &= \int_{\T^3}\Bigl| \rho\,\, u\, \nabla({\partial_{t} u(t) })\, {\partial_{t} u(t) }\Bigr|\, dx\\
&\leq M\,\, \|  u\, \partial_{t} u \|_{L^2}\,  \|  \nabla{\partial_{t} u}\|_{L^2}\\
&\leq  M\, \,\|  u \|_{L^3}\, \|  \partial_{t} u \|_{L^6}\, \|  \nabla{\partial_{t} u} \|_{L^2}.\\
\end{split}
\end{equation}

\noindent Applying \textit{the average method} for $\|  \partial_{t} u(t)  \|_{L^6} $ and $\|  u(t) \|_{L^3}$, we infer that
\begin{equation}
\begin{split}
I_{3}(t) &\leq    C(\rho_{0})\, \|  u(t)\|_{L^3}\, \|  \nabla{\partial_{t} u(t)} \|^2_{L^2}\\
&\leqslant C(\rho_{0})\,\, \|  \nabla{u}(t) \|_{L^2}\, \|  \nabla{\partial_{t} u}(t)  \|^2_{L^2}.\\
\end{split}
\end{equation}

\vsd
\noindent Concerning $I_{4}(t) , I_{5}(t) $, and $I_{6}(t) $, previous computations hold (applying Proposition \ref{application1 lemme average} and Young inequality) : 
\begin{equation}
\begin{split}
I_{4}(t)  &= \int_{\T^3} \Bigr|\, \rho\, ((u\, \cdotp \nabla u)\,\cdotp \nabla u)\, \cdotp\,  \partial_{t} u\Bigr| \,dx\\
&\leq  M\, \,\|  u(t) \|_{L^6}\, \|  \nabla u(t) \|^2_{L^6}\, \|  \partial_{t} u(t) \|_{L^2}\\
&\leq  C(\rho_{0})\,\, \|  \nabla u(t)\|_{L^2}\, \|  \nabla^2 u(t) \|^2_{L^2}\, \| \partial_{t} u (t) \|_{L^2}\\
&\leq  \frac{1}{4}\,   C(\rho_{0})\,\Bigl( \|  \nabla u(t)\|^2_{L^2}\, +\, \|  \nabla^2 u(t) \|^2_{L^2}  \Bigr) \, \Bigl( \|  \nabla^2 u(t) \|^2_{L^2}\, +\, \,\| \partial_{t} u(t)  \|^2_{L^2}  \Bigr).
\end{split}
\end{equation}

\vsd
\begin{equation}
\begin{split}
I_{5}(t)  &= \int_{\T^3} \Bigr| \,\rho\, ((u\,\otimes u) : \nabla^{2}) u\,\cdotp \partial_{t} u \Bigr|:, dx\\
&\leq M\, \,\|  u^2 \, \partial_{t} u(t) \|_{L^2}\,  \|  \nabla^2 u(t)\|_{L^2}\\
&\leq  C(\rho_{0})\,\, \|  u(t) \|^2_{L^6}\, \|  \nabla{\partial_{t} u(t)} \|_{L^2} \| \nabla^2 u(t)  \|_{L^2}\,  \\
&\leq  C(\rho_{0})\,\, \|  \nabla u(t) \|^2_{L^2}\, \|  \nabla{\partial_{t} u(t)} \|_{L^2} \| \nabla^2 u  \|_{L^2} \\
& \leq   C(\rho_{0})\,\, \|  \nabla u(t) \|^4_{L^2}\, \| \nabla^2 u(t)  \|^2_{L^2} + \frac{1}{4} \|  \nabla{\partial_{t} u(t)} \|^2_{L^2}.
\end{split}
\end{equation}

\vsd
\noindent Similar computation holds for the last term $I_{6}(t)$.
\begin{equation}
\begin{split}
I_{6}(t)  &= \int_{\T^3} \Bigr| \rho\,\, ( u\cdotp \nabla u)\,\cdot (u \cdot \nabla(\partial_{t} u)) \Bigr|\,dx\\
&\leq M\, \,\|  u^2 \, \nabla u(t) \|_{L^2}\,  \| \nabla{\partial_{t} u(t)} \|_{L^2}\\
& \leq C(\rho_{0})\,\,   \|  \nabla u(t) \|^4_{L^2}\, \| \nabla^2 u(t)  \|^2_{L^2} + \frac{1}{4} \|  \nabla{\partial_{t} u(t)} \|^2_{L^2}.
\end{split}
\end{equation}
\vsd

\noindent Let us keep on the proof. Plugging these above estimates into the (\ref{estimatederiv\'ee}) gives rise to 
\begin{equation}
\begin{split}
\frac{m}{2}\, \dfrac{d}{dt}\bigl( \,\| \partial_{t} u \|^2_{L^2}\bigr) + \, \, \|\nabla{\partial_{t} u(t)}\|^2_{L^2}  &\leqslant\, \frac{m}{2}\,\| \partial_{t} u(t) \|^{2}_{L^2}\, + C(\rho_{0})\, \,\|  \partial_{t} u(t) \|^2_{L^2}\, \Bigl( \|  \nabla u(t) \|^2_{L^2}\, +\, \|  \nabla^2 u (t)\|^2_{L^2}\Bigr)\\  &+ \frac{1}{4}\|  \nabla{\partial_{t} u(t)} \|^2_{L^2}\, + C(\rho_{0})\, \|  \nabla{u} (t)\|_{L^2}\, \|  \nabla{\partial_{t} u(t)} \|^2_{L^2}\\
&+ C(\rho_{0})\, \Bigl( \|  \nabla u(t)\|^2_{L^2}\, +\, \|  \nabla^2 u (t)\|^2_{L^2}  \Bigr) \, \Bigl( \|  \nabla^2 u(t) \|^2_{L^2}\, +\, \| \partial_{t} u(t)  \|^2_{L^2}  \Bigr)\\ &+  2\, C(\rho_{0})\, \, \|  \nabla u(t) \|^4_{L^2}\, \| \nabla^2 u(t)  \|^2_{L^2} + \frac{1}{2} \|  \nabla{\partial_{t} u(t)} \|^2_{L^2},\\
\end{split}
\end{equation}
\noindent so that
\begin{equation*}
\begin{split}
\frac{m}{2}\, \dfrac{d}{dt}\bigl( \,\| \partial_{t} u(t) \|^2_{L^2}\bigr) + \, \, \frac{1}{4} \|\nabla{\partial_{t} u(t)}\|^2_{L^2}  &\leqslant\, \frac{m}{2}\,\| \partial_{t} u(t) \|^{2}_{L^2}\,  +   \, C(\rho_{0}) \,\|  \nabla{u}(t)\|_{L^2}\, \|  \nabla{\partial_{t} u(t)} \|^2_{L^2}\\ &+ 2\,C(\rho_{0})\, \, \|  \nabla u(t) \|^4_{L^2}\, \| \nabla^2 u(t)  \|^2_{L^2}\\ &+ C(\rho_{0})\,\Bigl( \|  \nabla u(t)\|^2_{L^2}\, +\, \|  \nabla^2 u(t) \|^2_{L^2}  \Bigr) \, \Bigl( \|  \nabla^2 u(t) \|^2_{L^2}\, +\, \| \partial_{t} u(t)  \|^2_{L^2}  \Bigr). 
\end{split}
\end{equation*}
By integration in time, we have : 
\begin{equation}
\label{eq integr\'ee}
\begin{split}
\frac{m}{2}\,\, \| \partial_{t} u(t) \|^2_{L^2} + &\frac{1}{2} \int_{0}^{t} \,\|\nabla{\partial_{t} u(t')}\|^2_{L^2} \, dt'  \leqslant\, \| u_{0} \|^2_{H^2} \, +  \frac{m}{2}\,\int_{0}^{t}\| \partial_{t} u (t')\|^{2}_{L^2}\, dt'\\ & +\, C(\rho_{0})\, \int_{0}^{t}\,\|  \nabla{u}(t')\|_{L^2}\, \|  \nabla{\partial_{t} u(t')} \|^2_{L^2} \, dt' \\  &+ C(\rho_{0})\, \int_{0}^{t} \, \|  \nabla u(t') \|^4_{L^2}\, \| \nabla^2 u(t')  \|^2_{L^2} \, dt'\\ &+C(\rho_{0})\, \int_{0}^{t} \,\Bigl( \|  \nabla u(t')\|^2_{L^2}\, +\, \|  \nabla^2 u(t') \|^2_{L^2}  \Bigr) \, \Bigl( \|  \nabla^2 u(t') \|^2_{L^2}\, +\, \| \partial_{t} u(t')  \|^2_{L^2}  \Bigr)\, dt'.\\
\end{split}
\end{equation}
Concerning the term $\dis{\int_{0}^{t}\,\|  \nabla{u}(t')\|_{L^2}\, \|  \nabla{\partial_{t} u(t')} \|^2_{L^2} \, dt'}$  
$$\int_{0}^{t}\,\|  \nabla{u}(t')\|_{L^2}\, \|  \nabla{\partial_{t} u(t')} \|^2_{L^2} \, dt' \leq \,  \|  \nabla{u}\|_{L^{\infty}_{T}(L^2)}\, \int_{0}^{t} \,\|  \nabla{\partial_{t} u(t')} \|^2_{L^2} \,dt',$$
\noindent which becomes, by vertue of Theorem \ref{theorem1GWP}, 
$$\int_{0}^{t} \,\|  \nabla{u}(t')\|_{L^2}\, \|  \nabla{\partial_{t} u(t')} \|^2_{L^2} \, dt' \leq C\,  \|  \nabla{u_{0}}\|_{L^2}\, \int_{0}^{t} \, \|  \nabla{\partial_{t} u(t')} \|^2_{L^2}\, dt'.$$
\noindent Same argument combining with Theorem \ref{theorem1GWP} gives rise to
$$ \int_{0}^{t} \, \|  \nabla u(t') \|^4_{L^2}\, \| \nabla^2 u(t')  \|^2_{L^2} \, dt' \leq C\, \|  \nabla{u_{0}}\|^6_{L^2}.$$
\noindent As a result, Inequation (\ref{eq integr\'ee}) can be rewritten as follows ( providing we choose $\|  \nabla{u_{0}}\|_{L^2}$ small enough)  
\begin{equation}
\label{sum1}
\begin{split}
\frac{m}{2}\, \, \| \partial_{t} u(t) \|^2_{L^2} +& \frac{1}{3} \int_{0}^{t}  \,\|\nabla{\partial_{t} u(t')}\|^2_{L^2} \, dt'  \lesssim\, \| u_{0} \|^2_{H^2}  + \, \frac{m}{2}\,\|  \nabla{u_{0}}\|^2_{L^2} + \|  \nabla{u_{0}}\|^6_{L^2} \,\\ &+\,\int_{0}^{t}  \,\Bigl( \|  \nabla u(t')\|^2_{L^2}\, +\, \|  \nabla^2 u(t') \|^2_{L^2}  \Bigr) \, \Bigl( \|  \nabla^2 u(t') \|^2_{L^2}\, +\, \| \partial_{t} u(t')  \|^2_{L^2}  \Bigr)\, dt'. 
\end{split}
\end{equation}
\noindent Moreover, the momentum equation given by
$$ - \Delta{u} + \nabla{\Pi} = \,-\,\rho\,\bigl(\partial_{t} u +  u\cdot\nabla{u} \bigr),$$
which along with the classical estimates on the Stokes system, ensures that
\begin{equation*}
\begin{split}
 \|\nabla^2{u}(t)\|_{L^2} + \| \nabla{\Pi}(t)\|_{L^2} &\leqslant C\, \Bigl( \| \partial_{t} u(t)\|_{L^2} + \|  \nabla u(t)\|^{\frac{3}{2}}_{L^2} \| \nabla^2{u}(t)\|^{\frac{1}{2}}_{L^2}\Bigr)  \\
&\lesssim  \| \partial_{t} u(t)\|_{L^2} + \|  \nabla u(t)\|^{3}_{L^2} + \frac{1}{2}\| \nabla^2{u}(t)\|_{L^2}.\\
\end{split}
\end{equation*}\noindent So that, we get
\begin{equation}
\begin{split}
\frac{1}{2} \, \|\nabla^2{u}(t)\|_{L^2} + \, \| \nabla{\Pi}(t)\|_{L^2} &\lesssim  \,\| \partial_{t} u(t)\|_{L^2} +  \,\|  \nabla u(t)\|^{3}_{L^2}.
\end{split}
\end{equation}
\noindent By vertue of Theorem \ref{theorem1GWP}, we obtain 
\begin{equation}
\label{stokes regularity}
\sup_{t \in[0,T]} \Bigl(\frac{1}{2} \, \|\nabla^2{u}(t)\|^2_{L^2} + \,\| \nabla{\Pi}(t)\|^2_{L^2}\Bigr) \lesssim \sup_{t \in[0,T]} \bigl(\,\| \partial_{t} u(t)\|^2_{L^2} \bigr)+ \|  \nabla u_{0}\|^{6}_{L^2}. 
\end{equation}

\vskip 0.2cm
\begin{remark}
Let us point out that searching an estimate of $\| u\|_{L^2_{T}(H^3)}$ is a natural idea here since the initial velocity $u_{0}$ belongs to the space $H^2$. But actually, it is not relevant. Indeed, to perform it, we shall use the theory of Stokes problems. We shall begin derivating the momentum equation with respect to the space, and then, we shall take the $L^2$ norm. But, such an approach is doomed to fail, because requires an estimate on  $\dis{\sup_{t \in [0,T]} \| \nabla  \rho\|_{L^\infty}}$, which is not our case here, since the density function only belongs to $L^{\infty}([0,T] \times \T^3)$.
\end{remark}

\noindent Once again, the momentum equation gives  
$$ -\Delta{u} + \nabla{\Pi} = \,-\,\rho(\partial_{t} u + u\cdot\nabla{u}).$$
We take the $L^6$-norm and use the fact that $\|u\cdot\nabla{u}(t) \|_{L^6} \leqslant C\, \|\nabla (u\cdot\nabla{u}(t)) \|_{L^2} $ since $\overline{u\cdot\nabla{u}}=0$.  
\begin{equation*}
\begin{split}
\|\nabla^2{u}(t)\|_{L^6} \,+ \,\| \nabla^2 p(t) \|_{L^6} &\leqslant \| {\rho \bigl(\partial_{t} u\, +\, u\cdot\nabla{u}}\bigr)\|_{L^6} \\ 
&\leq  C(\rho_{0})\,  \Bigl(  \,\,\| \nabla{\partial_{t} u(t)} \|_{L^2}\,\, + \| (\nabla u(t))^2\|_{L^2} + \|  u(t) (\nabla^2 u(t))\|_{L^2}\Bigr)\\
&\leqslant C(\rho_{0})\,   \Bigl(  \,\,\| \nabla{\partial_{t} u(t)} \|_{L^2}\,\, + \| \nabla u(t)\|_{L^3}\, \| \nabla^2 u(t)\|_{L^2} \, + \,  \|  u(t)\|_{L^3}\, \| \nabla^2 u(t)\|_{L^6}\Bigr)
\end{split}
\end{equation*}
\noindent Applying Proposition \ref{application 2 lemme average} to the term $\| \nabla u(t)\|_{L^3}$, we get 
\begin{equation*}
\begin{split}
\|\nabla^2{u}(t)\|_{L^6} \,+ \,\| \nabla^2 p(t) \|_{L^6} &\lesssim   \,\| \nabla{\partial_{t} u(t)} \|_{L^2}\,\, + \| \nabla u(t)\|^{\frac{1}{2}}_{L^2}\,\| \nabla^2 u(t)\|^{\frac{3}{2}}_{L^2} \, + \,  \|  \nabla u(t)\|_{L^2}\, \| \nabla^2 u(t)\|_{L^6}.
\end{split}
\end{equation*}
\noindent By integration in time : 
\begin{equation*}
\begin{split}
\int_{0}^{t}\,\|\nabla^2{u}(t')\|^2_{L^6} \, dt' \,+ \,\int_{0}^{t}\,\| \nabla^2 p(t') \|^2_{L^6}\, dt' &\lesssim \,\int_{0}^{t}\,\| \nabla{\partial_{t} u(t')} \|^2_{L^2} \, dt'\,\, + \int_{0}^{t}\,\| \nabla u(t')\|_{L^2}\,\| \nabla^2 u(t')\|^{3}_{L^2} \,dt' \\ &+ \,  \|  \nabla u\|^2_{L^{\infty}_{T}(L^2)}\, \int_{0}^{t} \,\| \nabla^2 u(t')\|^2_{L^6}\, dt'.
\end{split}
\end{equation*}
\vsd
\noindent On the one hand, Theorem \ref{theorem1GWP} provides $\ds{\|  \nabla u\|^2_{L^{\infty}_{T}(L^2)} \lesssim \|  \nabla u_{0}\|^2_{L^2}}$, which implies that $$\|  \nabla u\|^2_{L^{\infty}_{T}(L^2)}\, \int_{0}^{t} \,\| \nabla^2 u(t')\|^2_{L^6} \, dt' \leqslant \|  \nabla u_{0}\|^2_{L^2}\, \int_{0}^{t} \,\| \nabla^2 u(t')\|^2_{L^6} \, dt'.$$ 
\noindent On the other hand, applying Estimates (\ref{L2 energy estimate}) and (\ref{H1 energy estimate}) of Theorem \ref{theorem1GWP}, to the term $$\int_{0}^{t} \,\| \nabla u(t')\|_{L^2}\, \| \nabla^2 u(t')\|^{3}_{L^2}\, dt' ,$$ leads to 
\begin{equation*}
\begin{split} 
\int_{0}^{t} \, \| \nabla u(t')\|_{L^2}\, \| \nabla^2 u(t')\|^{3}_{L^2} \, dt' &= \int_{0}^{t}\, \| \nabla u(t')\|_{L^2}\, \| \nabla^2 u(t')\|_{L^2}\,\, \| \nabla^2 u(t')\|^{2}_{L^2}\, dt'\\
&\leq\, \sup_{t \in[0,T]} \bigl(\,\| \nabla^2{u}(t) \|^2_{L^2}\bigr)\,\,  \Bigl(\int_{0}^{t} \|  \nabla u(t')\|^2_{L^2}\, dt'\Bigr)^{\frac{1}{2}}\,  \Bigl(\int_{0}^{t} \| \nabla^2{u}(t') \|^2_{L^2}\, dt'\Bigr)^{\frac{1}{2}}\\
& \lesssim  \| u_{0}\|_{L^2}\, \| \nabla u_{0}\|_{L^2}\,\,  \sup_{t \in[0,T]} \bigl(\,\| \nabla^2{u}(t) \|^2_{L^2}\bigr)\,\,
\end{split}\end{equation*}
\vsd
\noindent As a result, if $\| \nabla u_{0}\|_{L^2}$ is small enough, we have : 
 \begin{equation}
 \label{regularity stokes H2}
\begin{split}
\frac{\mu}{2} \int_{0}^{t} \,\|\nabla^2{u}(t')\|^2_{L^6} \, dt'\,+ \,\int_{0}^{t} \,\| \nabla^2 \Pi (t')\|^2_{L^6}\, dt' &\lesssim  \, \| u_{0}\|_{L^2}\, \| \nabla u_{0}\|_{L^2}\,\,  \sup_{t \in[0,T]} \bigl(\,\| \nabla^2{u}(t) \|^2_{L^2}\bigr)\\ &+ \,\int_{0}^{t} \,\| \nabla{\partial_{t} u(t')} \|^2_{L^2} \, dt'.
\end{split}
\end{equation}
\noindent Summing (\ref{regularity stokes H2}) with (\ref{stokes regularity}) and (\ref{sum1}), we recognize $B_{2}(T)$ and we get
\begin{equation*}
\begin{split}
B_{2}(T) &\lesssim  \frac{m}{2}\,\|  \nabla{u_{0}}\|^2_{L^2}  +\,  \|  \nabla{u_{0}}\|^6_{L^2} \, + \| u_{0} \|^2_{H^2} \,\\    &\qquad  +\frac{1}{4}\int_{0}^{t} \Bigl( \|  \nabla u(t')\|^2_{L^2}\, +\, \|  \nabla^2 u(t') \|^2_{L^2} \Bigr)\, dt'  \, \Bigl( \sup_{t \in[0,T]} \,\|  \nabla^2 u(t) \|^2_{L^2}\, +\,\sup_{t \in[0,T]} \,\| \partial_{t} u(t)  \|^2_{L^2}  \Bigr)\\ &\qquad  + \int_{0}^{t} \,\| \nabla{\partial_{t} u}(t') \|^2_{L^2} \, dt'\,\, + \| u_{0}\|_{L^2}\, \| \nabla u_{0}\|_{L^2}\,\,  \sup_{t \in[0,T]} (\, \| \nabla^2{u}(t) \|^2_{L^2})\,\,
\end{split}
\end{equation*}
The smallness condition on $\| u_{0}\|_{L^2}\, \| \nabla u_{0}\|_{L^2} $ implies 
\begin{equation*}
\begin{split}
B_{2}(T) &\lesssim  \frac{m}{2}\,\|  \nabla{u_{0}}\|^2_{L^2}  +\,  \|  \nabla{u_{0}}\|^6_{L^2} +\, \| u_{0} \|^2_{H^2} \, \\ &\qquad \qquad +\frac{1}{4}\int_{0}^{t} \Bigl( \|  \nabla u(t')\|^2_{L^2}\, +\, \|  \nabla^2 u(t') \|^2_{L^2} \Bigr)dt'  \, \Bigl( \sup_{t \in[0,T]} \,\|  \nabla^2 u(t) \|^2_{L^2}\, +\,\sup_{t \in[0,T]} \,\| \partial_{t} u(t)  \|^2_{L^2}  \Bigr).
\end{split}
\end{equation*} 
Now, we apply Gronwall lemma, and we have : 
\begin{equation}
\begin{split}
B_{2}(T) &\lesssim \Bigl(  \frac{m}{2}\,\|  \nabla{u_{0}}\|^2_{L^2}  +\,  \|  \nabla{u_{0}}\|^6_{L^2} \ + \, \| u_{0} \|^2_{H^2} \,\Bigr)\,\,
\exp{\Bigl(\int_{0}^{t} \|  \nabla u(t')\|^2_{L^2}\, dt'\,+\, \|  \nabla^2 u(t') \|^2_{L^2} \, dt' \Bigr)}.
\end{split}
\end{equation}
\noindent Once again Theorem \ref{theorem1GWP} gives the expected estimate in the exponential term. Finally, we get
\begin{equation}
\begin{split}
B_{2}(T)
&\lesssim    \bigl(  1 +\,  \|  u_{0}\|^4_{H^2} \,\bigr) \, \| u_{0} \|^2_{H^2} \,\
\exp{\Bigl(  \|  u_{0}\|^2_{L^2} + \|  \nabla u_{0}\|^2_{L^2} \Bigr)}. 
\end{split}
\end{equation}
\noindent This concludes the proof of \ref{H2 energy estimate}. Up to the regularization procedure of Friedrich, we have proved the global existence of solution of \ref{system NSI 3}, with data $(\rho_{0},u_{0})$ satisfiying hypothesis of Theorem \ref{theorem1GWP}.

\vskip 0.3cm

\section{Appendix}
\begin{lemma}{(Gronwall's Lemma)}\\
\label{gronwall}\sl{
Let $f$ and $g$ be two positive functions satisfying $\ds{\frac{1}{2}\dfrac{d}{dt} f^{2}(t) \leqslant f(t)\,g(t)}$.
Then, we have $$f(t) \leqslant f(0) + \int_{0}^{t} g(t')dt'.$$
}\end{lemma}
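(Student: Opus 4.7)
The plan is to turn the differential inequality $\tfrac{1}{2}\tfrac{d}{dt} f^{2}(t) \leq f(t)g(t)$ into a pointwise bound on $f'$, then integrate. Formally $\tfrac{1}{2}\tfrac{d}{dt} f^{2} = f\, f'$, so where $f(t)>0$ one may divide by $f(t)$ to obtain $f'(t) \leq g(t)$ and conclude by a single integration in time. The only delicate point is the handling of zeros of $f$, where this division is not legitimate; this is precisely the obstacle that the usual textbook proof glosses over and that the proposal must address cleanly.

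The cleanest way around this, and the approach I would adopt, is a regularization: for each $\varepsilon>0$, set
\[
f_{\varepsilon}(t) \eqdefa \sqrt{f^{2}(t) + \varepsilon^{2}}.
\]
Then $f_{\varepsilon}$ is strictly positive and smooth (assuming the regularity needed to differentiate), and
\[
f_{\varepsilon}(t)\, f_{\varepsilon}'(t) \;=\; \tfrac{1}{2}\tfrac{d}{dt}\bigl(f^{2}(t) + \varepsilon^{2}\bigr) \;=\; \tfrac{1}{2}\tfrac{d}{dt} f^{2}(t) \;\leq\; f(t)\, g(t) \;\leq\; f_{\varepsilon}(t)\, g(t),
\]
since $f \leq f_{\varepsilon}$ and $g \geq 0$. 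Dividing by the strictly positive quantity $f_{\varepsilon}(t)$ gives $f_{\varepsilon}'(t) \leq g(t)$.

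Integrating from $0$ to $t$ yields
\[
f_{\varepsilon}(t) \;\leq\; f_{\varepsilon}(0) + \int_{0}^{t} g(t')\, dt' \;=\; \sqrt{f^{2}(0)+\varepsilon^{2}} + \int_{0}^{t} g(t')\, dt'.
\]
Since $f(t) \leq f_{\varepsilon}(t)$ and the right-hand side converges to $f(0) + \int_{0}^{t} g(t')\, dt'$ as $\varepsilon \to 0^{+}$, passing to the limit produces the desired inequality. The only subtlety left to check is that the differential inequality in the hypothesis is to be interpreted in a sense (classical or a.e.) that makes the integration legitimate; in the applications made earlier in the paper, the functions of interest are smooth enough for this to be immediate, so no additional technical machinery is needed.
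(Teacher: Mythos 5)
Your proof is correct and rests on essentially the same idea as the paper's: avoid dividing at the zeros of $f$ by inserting an $\varepsilon^2$ under a square root and letting $\varepsilon\to 0$ at the end. The paper routes the regularization through the auxiliary integral $H(t)=2\int_0^t f g$ and differentiates $\sqrt{H(t)+f^2(0)+\varepsilon^2}$, whereas you regularize $f$ directly via $f_\varepsilon=\sqrt{f^2+\varepsilon^2}$ and use $f\leq f_\varepsilon$ together with $g\geq 0$; the two are equivalent, and your version is marginally more direct.
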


\begin{proof}
\noindent We introduce the function $\ds{H(t) \eqdefa 2\,\int_{0}^{t} f(t')\,g(t')\, dt'}$. As defined, we get immediately 
\begin{equation}
H'(t) =2\,f(t)\, g(t) \quad \hbox{and} \quad f^{2}(t)-f^{2}(0) \leqslant H(t).
\end{equation}
\noindent This implies that for any $\varepsilon >0$,
$$ f(t) \leqslant \sqrt{H(t) + f^{2}(0) + \varepsilon^2}.$$
\noindent Moreover, we have in particular $\ds{H'(t) \leqslant 2\,\sqrt{H(t) + f^{2}(0) + \varepsilon^2}\,\,  g(t)}$ and thus $$\dfrac{d}{dt} \sqrt{H(t) + f^{2}(0) + \varepsilon^2} \leqslant g(t).$$
\noindent By integration in time, we have 
$$ \sqrt{H(t) + f^{2}(0) + \varepsilon^2} \leqslant \sqrt{H(0) + f^{2}(0) + \varepsilon^2} + \int_{0}^{t} g(t')\,dt'.$$ 
\noindent Finally, we have for any $\varepsilon >0$,
$$ f(t) \leqslant \sqrt{f^2(0) + \varepsilon^2 } + \int_{0}^{t} g(t')\,dt',$$
\noindent which proves the result.
\end{proof}

\vskip 0.5cm
\begin{lemma}
\label{lemma product law}\sl{
\noindent The following properties hold
\begin{enumerate}
\item Sobolev embedding: if $p_{1} \leq p_{2}$ and $r_{1} \leq r_{2}$, then\\
$$ B^{s}_{p_{1},r_{1}} \hookrightarrow B^{s - N(\frac{1}{p_{1}} - \frac{1}{p_{2}})}_{p_{2},r_{2}}.$$
\item Product laws in Besov spaces: let $1 \leqslant r,p,p_{1},p_{2} \leqslant +\infty$.\\ If $s_{1},s_{2} < \frac{N}{p}$ and $s_{1}+s_{2} +N\,\min(0,1-\frac{2}{p}) >0$, then 
$$ \| uv\|_{B^{s_{1}+s_{2}- \frac{N}{p}}_{p,r}} \leqslant C\, \| u\|_{B^{s_{1}}_{p,r}} \, \| v\|_{B^{s_{2}}_{p,\infty}}.$$ 
\item Another product law: if $|s| < \frac{N}{p}$, then
$$ \| uv\|_{B^{s}_{p,r}} \leqslant C\, \| u\|_{B^{s}_{p,r}}\, \| v\|_{B^{\frac{N}{p}}_{p,\infty} \cap L^{\infty}}.$$
\item Algebric properties: for $s>0$, $B^{\frac{N}{p}}_{p,\infty} \cap L^{\infty}$ is an algebra. Moreover, for any $p \in [1,+\infty$, then 
$$ B^{\frac{N}{p}}_{p,1} \hookrightarrow B^{\frac{N}{p}}_{p,\infty} \cap L^{\infty}.$$
\end{enumerate}
}\end{lemma}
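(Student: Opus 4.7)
The plan is to approach all four items through the Littlewood--Paley decomposition and Bony's paraproduct formalism, so that every assertion reduces to Bernstein's inequality together with H\"older and a discrete summation argument. The point throughout is that the work happens at the level of a single dyadic block, and the passage to the Besov norm is just a summation in~$j$ with the right power of~$2^{j}$.

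For the Sobolev embedding (1), I would start from Bernstein's inequality applied block by block: if $p_{1} \leqslant p_{2}$, then
$$ \| \Delta_{j} u \|_{L^{p_{2}}} \lesssim 2^{j N\,(\frac{1}{p_{1}} - \frac{1}{p_{2}})}\, \| \Delta_{j} u \|_{L^{p_{1}}}. $$
Multiplying by $2^{j(s - N(\frac{1}{p_{1}} - \frac{1}{p_{2}}))}$ converts the $B^{s}_{p_{1},r_{1}}$ summability into summability at the target regularity, and the inclusion $r_{1} \leqslant r_{2}$ is nothing more than $\ell^{r_{1}} \hookrightarrow \ell^{r_{2}}$.

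For the product laws (2) and (3), the key tool I would use is Bony's decomposition
$$ uv = T_{u} v + T_{v} u + R(u,v), $$
with the usual paraproducts $T_{u} v = \sum_{j} S_{j-1} u\, \Delta_{j} v$ and remainder. For each of the three pieces I would frequency-localize, apply H\"older in the physical variable, and then use Bernstein to adjust integrability. The assumption $s_{i} < \frac{N}{p}$ makes the low-frequency factor $S_{j-1}$ behave essentially as a bounded multiplier after absorbing the relevant power of~$2^{j}$, while the symmetric hypothesis $s_{1} + s_{2} + N\,\min(0, 1 - \frac{2}{p}) > 0$ is exactly what is needed to make the remainder sum converge when $p > 2$, since H\"older there produces an intermediate $L^{p/2}$. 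The variant (3) with $|s| < \frac{N}{p}$ is handled in the same way, the $L^{\infty}$ control of the second factor simplifying the two paraproduct bounds and forcing only a one-sided bound on~$s$.

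The algebra property in (4) will then follow from (3) at the endpoint $s = \frac{N}{p}$, combined with the obvious product estimate in $L^{\infty}$ applied to the $L^{\infty}$ factor on each side. The embedding $B^{\frac{N}{p}}_{p,1} \hookrightarrow L^{\infty}$ comes directly from Bernstein via
$$ \| u \|_{L^{\infty}} \leqslant \sum_{j} \| \Delta_{j} u \|_{L^{\infty}} \lesssim \sum_{j} 2^{j\frac{N}{p}}\, \| \Delta_{j} u \|_{L^{p}} = \| u \|_{B^{\frac{N}{p}}_{p,1}}. $$
The main obstacle, as I see it, is not the method but the bookkeeping of index conditions in the remainder estimate of (2): one has to separate the cases $p \leqslant 2$ and $p > 2$ and to justify carefully the $N\,\min(0, 1 - \frac{2}{p})$ correction, which is where the borderline indices typically cause trouble. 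Everything else is essentially mechanical once the paraproduct formalism is in place.
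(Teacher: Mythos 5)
The paper itself does not prove this lemma: it is stated in the Appendix as a catalogue of standard Besov-space facts, with the surrounding text implicitly pointing to Bahouri--Chemin--Danchin \cite{BCDbis} for the source. So there is no ``paper proof'' to compare against; you are supplying an argument where the paper relies on a citation.

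Your outline is the standard one (Littlewood--Paley blocks, Bernstein, Bony decomposition, then dyadic summation) and is essentially correct for items (1)--(3). The one place where a genuine gap appears is item (4): you propose to deduce the algebra property of $B^{N/p}_{p,\infty}\cap L^\infty$ by ``using (3) at the endpoint $s = N/p$,'' but (3) requires the strict inequality $|s|<\frac{N}{p}$ and does not extend to the endpoint. The algebra estimate needs a separate run through Bony's decomposition in which the $L^\infty$ norm is used directly on the paraproducts --- for $T_uv$ one bounds $\|S_{j-1}u\|_{L^\infty}\leq\|u\|_{L^\infty}$ and for the remainder $R(u,v)$ one works in $L^p$ and $L^\infty$ with the endpoint index $\ell^\infty$ in $j$; the $\ell^\infty$ target is what makes the borderline regularity $s=\frac{N}{p}$ admissible. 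As written, invoking (3) at $s=\frac{N}{p}$ is not justified. The rest of item (4), i.e.\ the embedding $B^{N/p}_{p,1}\hookrightarrow L^\infty$ via Bernstein and the obvious $B^{N/p}_{p,1}\hookrightarrow B^{N/p}_{p,\infty}$, is fine.

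One further remark worth making explicit: the setting here is $\T^3$, not $\R^3$, so the Littlewood--Paley decomposition must be the inhomogeneous one including the block carrying the zero Fourier mode (the average). This does not change the mechanics of the argument, since the lemma concerns inhomogeneous Besov spaces, but it is worth flagging given that the rest of the paper is devoted precisely to tracking the averages that distinguish the torus from the whole space.
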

\vskip 0.5cm

\begin{lemma}\sl{
\label{derivée dans les besov}
\noindent Let $\mathcal{C}$ a ring of $\R^3$. A constant $C$ exists so that for any positive real number $\lambda$, any non-negative integer $k$, the following hold\\
$$\hbox{If} \quad \hbox{Supp}\,\,\widehat{u}\subset \lambda\, \mathcal{C},\quad then \quad C^{-1-k}\, \lambda^k\, \| u\|_{L^a}\,\, \leqslant\,\, \sup_{|\alpha| = k}\, \| \partial^{\alpha}u\|_{L^a}\,\, \leqslant\,\, C^{1+k}\, \lambda^k\, \| u\|_{L^a}.$$
}\end{lemma}

\begin{lemma}\sl{
\begin{equation}
\sum_{q \in \Z} 2^{\frac{3q}{2}}\, \bigl\| \left[ \Delta_{q}, S_{m}a \right]\, \nabla u  \bigr\|_{L^1_{t}(L^2)} \leqslant C\,\Bigl( 2^{m}\, \|a\|_{L^{\infty}_{t}(L^\infty)} \, \|u \|_{L^1_{t}(B^{\frac{3}{2}}_{2,1})} \, + \, 2^{2m}\, \|a\|_{L^{\infty}_{t}(L^2)} \, |u \|_{L^1_{t}(\dot{H}^{2})}  \Bigr)
\end{equation}
}\end{lemma}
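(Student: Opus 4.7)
The plan is to apply Bony's paraproduct decomposition $S_{m}a \cdot \nabla u = T_{S_{m}a}\nabla u + T_{\nabla u}(S_{m}a) + R(S_{m}a,\nabla u)$ and then split the sum over $q$ into the two regimes $q > m$ and $q \leqslant m$. The point is that $\widehat{S_{m}a}$ is supported in the ball of radius $2^{m+1}$, so the spectral behavior of the commutator changes sharply around $q = m$, and the two regimes will naturally produce the two terms on the right-hand side.

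For the high-frequency regime $q > m$, almost-orthogonality gives $[\Delta_{q},S_{m}a]\nabla u = [\Delta_{q},S_{m}a]\tilde{\Delta}_{q}\nabla u$ with $\tilde{\Delta}_{q} = \sum_{|j-q|\leqslant 1}\Delta_{j}$. A first-order Taylor expansion of $S_{m}a$ combined with the integral representation of $\Delta_{q}$ yields the classical estimate
\begin{equation*}
\|[\Delta_{q},S_{m}a]\tilde{\Delta}_{q}\nabla u\|_{L^{2}} \leqslant C\,2^{-q}\,\|\nabla S_{m}a\|_{L^{\infty}}\,\|\tilde{\Delta}_{q}\nabla u\|_{L^{2}} \leqslant C\,2^{m}\,\|a\|_{L^{\infty}}\,\|\tilde{\Delta}_{q}u\|_{L^{2}},
\end{equation*}
having used Bernstein on $\nabla S_{m}a$ and on $\tilde{\Delta}_{q}\nabla u$. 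Weighting by $2^{3q/2}$ and summing over $q>m$ recovers the first contribution $C\,2^{m}\|a\|_{L^{\infty}}\|u\|_{B^{3/2}_{2,1}}$.

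For the low-frequency regime $q \leqslant m$, I decompose the commutator into the paraproduct "main" piece $\Delta_{q}T_{S_{m}a}\nabla u - S_{m}a\,\Delta_{q}\nabla u$, the reverse paraproduct $\Delta_{q}T_{\nabla u}(S_{m}a)$, and the remainder $\Delta_{q}R(S_{m}a,\nabla u)$. By telescoping, the main piece reduces to a sum of genuine commutators $[\Delta_{q},S_{j-1}(S_{m}a)]\Delta_{j}\nabla u$ with $|j-q|\lesssim 1$, plus a boundary term $(S_{m}a - S_{q-1}(S_{m}a))\Delta_{q}\nabla u$. The genuine commutators are treated by Taylor exactly as above and reproduce the $2^{m}\|a\|_{L^{\infty}}\|u\|_{B^{3/2}_{2,1}}$ bound. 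For the boundary term together with $\Delta_{q}T_{\nabla u}(S_{m}a)$ and $\Delta_{q}R(S_{m}a,\nabla u)$, I use the Bernstein chain $\|S_{j-1}\nabla u\|_{L^{\infty}} \lesssim 2^{j/2}\|\nabla u\|_{L^{6}} \lesssim 2^{j/2}\|u\|_{\dot{H}^{2}}$, invoking the torus Sobolev embedding from Proposition \ref{application1 lemme average} applied to the mean-free $\nabla u$, together with $\|\Delta_{j}(S_{m}a)\|_{L^{2}} \leqslant \|a\|_{L^{2}}$ and a Cauchy-Schwarz summation over $j \in [q-1,m-1]$; after weighting by $2^{3q/2}$ and summing over $q \leqslant m+3$ this delivers the second contribution $C\,2^{2m}\|a\|_{L^{2}}\|u\|_{\dot{H}^{2}}$.

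The main technical difficulty will be the bookkeeping in the low-frequency regime, specifically the boundary term $(S_{m}a - S_{q-1}(S_{m}a))\Delta_{q}\nabla u$: a naive H\"older step using $\|\Delta_{j}a\|_{L^{\infty}}$ produces the wrong power $2^{5m/2}$, and one must dualize via Bernstein $L^{2}\!\to\!L^{\infty}$ on $\Delta_{q}\nabla u$ and exploit $\ell^{2}$-summability in $j$ through Cauchy-Schwarz against $\sum_{q \leqslant m} 2^{q} < +\infty$ to recover the correct $\|a\|_{L^{2}}\|u\|_{\dot{H}^{2}}$ scaling. Since every step is pointwise in time, a final integration in $t$ converts the spatial estimates into the stated $L^{1}_{t}$-norms and completes the proof.
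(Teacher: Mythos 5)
Your proposal is correct and, while it rests on the same Bony decomposition, it organizes the proof in a genuinely different way from the paper. The paper does not split the $q$-sum at all: it writes
$[\Delta_q, S_m a]\nabla u = [\Delta_q, T_{S_m a}]\nabla u + \Delta_q T_{\nabla u}(S_m a) + \Delta_q R(S_m a,\nabla u) - T'_{\Delta_q\nabla u}(S_m a)$
and treats each of the four pieces uniformly in $q$, invoking the general paraproduct and remainder theorems of \cite{BCDbis} (Theorems $2.82$ and $2.85$), the Bernstein embedding $B^{1}_{2,2}\hookrightarrow B^{-\frac12}_{\infty,2}$ and Poincar\'e--Wirtinger, with the key factor $2^{2m}$ coming from $\|S_m a\|_{B^2_{2,2}}\lesssim 2^{2m}\|a\|_{L^2}$. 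You instead exploit the compact spectral support of $S_m a$ to split at $q\sim m$: for $q\gtrsim m$, almost-orthogonality collapses the whole commutator to $[\Delta_q, S_m a]\tilde\Delta_q\nabla u$ (the reverse paraproduct and remainder vanish identically there), so Taylor directly delivers the $2^m\|a\|_{L^\infty}\|u\|_{B^{3/2}_{2,1}}$ contribution; for $q\lesssim m$ you reinstall Bony, recover the genuine commutator part by telescoping, and close the boundary/reverse-paraproduct/remainder terms via Bernstein, the torus Sobolev embedding applied to the mean-free $\nabla u$, and Cauchy--Schwarz in $j$ and then $q$, yielding $2^{2m}\|a\|_{L^2}\|u\|_{\dot H^2}$. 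What your route buys is a more self-contained, frequency-by-frequency argument that avoids quoting the general BCD theorems; what it costs is the extra bookkeeping in the low-frequency regime and some care with the cutoff (it should really read $q> m+C$ for a fixed universal $C$, accounting for the Minkowski-sum support of $S_m a\cdot\Delta_j\nabla u$, rather than $q>m$ exactly). Two small remarks: both regimes contribute to the $2^m\|a\|_{L^\infty}$ term, so the opening sentence that ``the two regimes will naturally produce the two terms'' is slightly misleading, though your detailed steps are consistent; and the aside claiming a naive H\"older step ``produces the wrong power $2^{5m/2}$'' does not check out --- combining $\|\Delta_j a\|_{L^\infty}\lesssim 2^{3j/2}\|\Delta_j a\|_{L^2}$ with Cauchy--Schwarz in $j$ and then in $q$ also lands on $2^{2m}$ --- but this is a tangential comment, not a load-bearing step.
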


\begin{proof}
By vertue of Bony's decomposition, the commutator may be decomposed into 
\begin{equation}
\begin{split}
\left[ \Delta_{q}, S_{m}a \right] &= \Delta_{q}(S_{m}a\, \nabla u) - S_{m}a \Delta_{q} \nabla u\\
&= \Delta_{q}(T_{S_{m}a}\nabla u) + \Delta_{q}(T_{\nabla u}S_{m}a) + \Delta_{q} R(S_{m}a, \, \nabla u)\\ &\qquad - T_{S_{m}a} \Delta_{q}\nabla u - T_{\Delta_{q}\nabla u} S_{m}a - R(S_{m}a\,,\,\Delta_{q}\nabla u)\\
&= \left[ \Delta_{q}, T_{S_{m}a} \right]\nabla u \,+\, \Delta_{q}(T_{\nabla u}S_{m}a) +\, \Delta_{q}R(S_{m}a, \, \nabla u) \, -\, T^{'}_{\Delta_{q}\nabla u} S_{m}a, 
\end{split}
\end{equation}
\noindent where $\ds{T'_{a}b\, \eqdefa\, T_{a}b\, +\, R(a,b)}$. 
\noindent Let us analyse each term in the right-hand-side. Firstly, we decompose the first commutator term into
\begin{equation}
\begin{split}
\left[ \Delta_{q}, T_{S_{m}a} \right]\nabla u &= \Delta_{q}(T_{S_{m}a}\, \nabla u) - T_{S_{m}a} \Delta_{q} \nabla u\\
&= \Delta_{q} \Bigl(\sum_{|q-q'|\leqslant 4} S_{q'-1}S_{m}a\, \Delta_{q'}\nabla u \Bigr) - \sum_{|q-q'|\leqslant 4} S_{q'-1}S_{m}a\, \Delta_{q'} \Delta_{q} \nabla u\\
&= \sum_{|q-q'|\leqslant 4} \left[ \Delta_{q}, S_{q'-1}S_{m}a\, \right]\Delta_{q'}\nabla u.  
\end{split}
\end{equation}
\noindent Now, let us focus on the commutator term $\ds{\left[ \Delta_{q}, S_{q'-1}S_{m}a\, \right]\Delta_{q'}\nabla u}$. We shall use definiton of Littlewood-Paley theory. 
\begin{equation}
\begin{split}
\left[ \Delta_{q}, S_{q'-1}S_{m}a\, \right]\Delta_{q'}\nabla u &= \Delta_{q} \Bigl( S_{q'-1}S_{m}a\, \Delta_{q'}\nabla u  \Bigr) - S_{q'-1}S_{m}a\, \Delta_{q} \Delta_{q'}\nabla u\\
&= \varphi(2^{-q}\,|D|)\,S_{q'-1}S_{m}a\, \Delta_{q'}\nabla u - S_{q'-1}S_{m}a\,\varphi(2^{-q}\,|D|)\, \Delta_{q'}\nabla u.   
\end{split}
\end{equation}
\noindent In particular, writting $\ds{h \eqdefa \mathcal{F}^{-1}\varphi(\vert \cdotp \vert)}$, we get 
\begin{equation}
\begin{split}
\varphi(2^{-q}\,|D|)\,S_{q'-1}S_{m}a\, \Delta_{q'}\nabla u(x) &\eqdefa \int_{\T^3} 2^{qd}\, h(2^{q}y)\, S_{q'-1}S_{m}a(x-y)\, \Delta_{q'}\nabla u(x-y)\, dy \\
&= \int_{\T^3}  h(z)\, S_{q'-1}S_{m}a(x-2^{-q}z)\, \Delta_{q'}\nabla u(x-2^{-q}z)\, dz.    
\end{split}
\end{equation}
\noindent Likewise, we have
\begin{equation}
\begin{split}
S_{q'-1}S_{m}a\,\varphi(2^{-q}\,|D|)\, \Delta_{q'}\nabla u (x) &\eqdefa S_{q'-1}S_{m}a\,(x)\, \int_{\T^3} 2^{qd}\, h(2^{q}y)\, \Delta_{q'}\nabla u(x-y)\, dy\\ 
&= \int_{\T^3}  S_{q'-1}S_{m}a\,(x)\,\,  h(z)\, \Delta_{q'}\nabla u(x-2^{-q}z)\, dz.    
\end{split}
\end{equation}
\noindent Therefore, applying the first-order Taylor's formula, we get, for any $x \in \T^3$,  
\begin{equation}
\begin{split}
\left[ \Delta_{q}, S_{q'-1}S_{m}a\, \right]\Delta_{q'}\nabla u(x) &= \int_{\T^3}  h(z)\, \left[ S_{q'-1}S_{m}a(x-2^{-q}z)\, - S_{q'-1}S_{m}a\,(x)  \right]\, \Delta_{q'}\nabla u(x-2^{-q}z)\, dz\\
&= -\,\int_{\T^3} \int_{0}^{1}\, h(z)\,2^{-q}z\,\cdotp \nabla S_{q'-1}S_{m}a(x-2^{-q}\,z\,t)\, \Delta_{q'}\nabla u(x-2^{-q}z)\, dz\,dt\\ 
&=  - 2^{-q}\, \,\int_{\T^3} \int_{0}^{1}\, 2^{qd}\, h(2^{q}y)\, y\cdotp\nabla S_{q'-1}S_{m}a(x-y\,t)\, \Delta_{q'}\nabla u(x-y)\, dz\,dt.\\    
\end{split}
\end{equation}
\noindent Therefore, we infer that, for any $x \in \T^3$,
\begin{equation*}
\| \left[ \Delta_{q}, S_{q'-1}S_{m}a\, \right]\Delta_{q'}\nabla u  \|_{L^2} \leqslant \| \nabla S_{q'-1}S_{m}a  \|_{L^{\infty}} \, 2^{-q}\, \Bigl\| \int_{\T^3}\, 2^{qd}\, (2^{q}y)\, h(2^{q}y)\, \Delta_{q'}\nabla u(\cdotp-y)\, dz   \Bigr\|_{L^2}.
\end{equation*}
\noindent Applying Young's inequality ($\ds{ L^1 \ast L^2 = L^2}$), we infer that 
\begin{equation}
\begin{split}
\| \left[ \Delta_{q}, S_{q'-1}S_{m}a\, \right]\Delta_{q'}\nabla u  \|_{L^2} \leqslant C\, \| \nabla S_{q'-1}S_{m}a  \|_{L^{\infty}} \, 2^{-q}\, \bigl\| \Delta_{q'}\nabla u\bigr\|_{L^2}.
\end{split}
\end{equation}
\noindent Obviously, we have 
$$ \| \nabla S_{q'-1}S_{m}a  \|_{L^{\infty}} \leqslant \| \nabla S_{m}a  \|_{L^{\infty}} \leqslant 2^{m}\, \|a \|_{L^{\infty}}.$$
\noindent Finally, we get 
\begin{equation*}
\| \left[ \Delta_{q}, S_{q'-1}S_{m}a\, \right]\Delta_{q'}\nabla u  \|_{L^2} \leqslant C\,  2^{-q}\, 2^{m}\, \|a \|_{L^{\infty}}\, \| \Delta_{q'}\nabla u  \|_{L^2},
\end{equation*}
\noindent and thus, 
\begin{equation}
\begin{split}
\|  \left[ \Delta_{q}, T_{S_{m}a} \right]\nabla u \|_{L^2} &\leqslant C\, \sum_{|q-q'|\leqslant 4} 2^{-q}\, 2^{m}\, \|a \|_{L^{\infty}}\, \| \Delta_{q'}\nabla u  \|_{L^2}.\\ 
\end{split}
\end{equation}
\noindent As a consequence, we have 
\begin{equation}
\begin{split}
2^{\frac{3q}{2}}\, \|  \left[ \Delta_{q}, T_{S_{m}a} \right]\nabla u \|_{L^2} &\leqslant C\, \sum_{|q-q'|\leqslant 4} 2^{\frac{3q}{2}}\, 2^{-q}\, 2^{m}\, \|a \|_{L^{\infty}}\, 2^{\frac{-q'}{2}}\, 2^{\frac{q'}{2}}\, \| \Delta_{q'}\nabla u  \|_{L^2}\\ 
&\leqslant C\, 2^{m}\, \|a \|_{L^{\infty}}\, \sum_{|q-q'|\leqslant 4} 2^{\frac{q-q'}{2}}\, 2^{\frac{q'}{2}}\, \| \Delta_{q'}\nabla u  \|_{L^2}.\\  
\end{split}
\end{equation}
\noindent By definition of the Besov norm, there exists a serie $(c_{q'})_{q \in \Z}$ belonging to $\ell^{1}(\Z)$ such that 
$$ 2^{\frac{q'}{2}}\, \| \Delta_{q'}\nabla u  \|_{L^2} \leqslant C\, c_{q'}\, \| \nabla u \|_{B^{\frac{1}{2}}_{2,1}}.$$ 
\noindent And thus, 
\begin{equation}
\begin{split}
2^{\frac{3q}{2}}\, \|  \left[ \Delta_{q}, T_{S_{m}a} \right]\nabla u \|_{L^2}
&\leqslant C\, 2^{m}\, \|a \|_{L^{\infty}}\, \| \nabla u \|_{B^{\frac{1}{2}}_{2,1}}\, \sum_{|q-q'|\leqslant 4} 2^{\frac{q-q'}{2}}\, c_{q'}.\\  
\end{split}
\end{equation}
\noindent We notice, by vertue of Young's inequality, that the term $\ds{\sum_{|q-q'|\leqslant 4} 2^{\frac{q-q'}{2}}\, c_{q'}}$ belongs to $\ell^{1}(\Z)$. Indeed, let us define $\ds{d_{q} \eqdefa \sum_{|q-q'|\leqslant 4} 2^{\frac{q-q'}{2}}\, c_{q'} }$. Thanks to Young's inequality, we get
$$ \| d_{q}\|_{\ell^{1}(\Z)} \leqslant \| c_{q}\|_{\ell^{1}(\Z)} \, \times \, \sum_{-4 \leqslant k \leqslant 4} 2^{\frac{k}{2}} \leqslant C.$$   
\noindent Finally, we get 
\begin{equation}
\begin{split}
\sum_{q \in \Z} 2^{\frac{3q}{2}}\, \|  \left[ \Delta_{q}, T_{S_{m}a} \right]\nabla u \|_{L^2}
&\leqslant C\,  2^{m}\, \|a \|_{L^{\infty}}\, \| \nabla u \|_{B^{\frac{1}{2}}_{2,1}}\,\sum_{q \in \Z} d_{q}\\
&\leqslant C\,  2^{m}\, \|a \|_{L^{\infty}}\, \| \nabla u \|_{B^{\frac{1}{2}}_{2,1}}.
\end{split}
\end{equation}
\noindent By integration in time, we infer that
 \begin{equation}
 \label{estimatebis1}
\sum_{q \in \Z} 2^{\frac{3q}{2}}\,  \| \left[ \Delta_{q}, T_{S_{m}a} \right]\nabla u \|_{L^{1}_{t}(L^2)}
\leqslant C\,  2^{m}\, \|a \|_{L^{\infty}_{t}(L^{\infty})}\, \| \nabla u \|_{L^{1}_{t}(B^{\frac{1}{2}}_{2,1})}.
\end{equation}
\noindent This gives the first term in the Lemma.  The second term will stem from remainder terms in the Bony's decomposition. More precisely, concerning the term $\ds{\sum_{q \in \Z} 2^{\frac{3q}{2}}\, \Delta_{q} T_{\nabla u}S_{m}a \|_{L^{1}_{t}(L^2)}}$, we have by definition 
$$ \sum_{q \in \Z} 2^{\frac{3q}{2}}\, \|  \Delta_{q} T_{\nabla u}S_{m}a \|_{L^{1}_{t}(L^2)} \eqdefa \| T_{\nabla u}S_{m}a \|_{B^{\frac{3}{2}}_{2,1}}.$$ 
\noindent By vertue of Theorem $2.82$ in the book \cite{BCDbis}, we have
\begin{equation}
\| T_{\nabla u}S_{m}a \|_{B^{\frac{3}{2}}_{2,1}} \leqslant C\, \| \nabla u\|_{B^{-\frac{1}{2}}_{\infty,2}} \, \| S_{m}a \|_{B^{2}_{2,2}}.
\end{equation}
Moreover, Bernstein result implies the following embedding $\ds{ B^{1}_{2,2} \hookrightarrow B^{-\frac{1}{2}}_{\infty,2} }$. Therefore, we have
\begin{equation}
\| T_{\nabla u}S_{m}a \|_{B^{\frac{3}{2}}_{2,1}} \leqslant C\, \| \nabla u\|_{B^{1}_{2,2} \equiv H^1} \,\, \| S_{m}a \|_{B^{2}_{2,2}}.
\end{equation}
\noindent Applying Poincaré-Wirtinger to $\ds{\| \nabla u\|_{ H^1}}$, (since the average of $\nabla u$ is nul), we infer that the norms $\ds{\| \nabla u\|_{ H^1}}$ and $\ds{\| \nabla u\|_{ \dot{H}^1}}$ are equivalent and thus 
\begin{equation}
\| T_{\nabla u}S_{m}a \|_{B^{\frac{3}{2}}_{2,1}} \leqslant C\, \|  u\|_{\dot{H}^2} \,\, \| S_{m}a \|_{B^{2}_{2,2}}.
\end{equation}
\noindent On the other hand, it seems obvious that $\ds{\| S_{m}a \|_{B^{2}_{2,2}} \leqslant \| S_{m}a \|_{\dot{B}^{2}_{2,2}} \leqslant \| S_{m}a \|_{\dot{H}^2}}$. As a result, 
\begin{equation}
\| T_{\nabla u}S_{m}a \|_{B^{\frac{3}{2}}_{2,1}} \leqslant C\, 2^{2m}\, \|  u\|_{\dot{H}^2} \,\, \| S_{m}a \|_{\dot{H}^2}.
\end{equation}
\noindent Finally, by integration in time and by definition of $\ds{S_{m}a}$, we get 
\begin{equation}
\label{estimatebis2}
\| T_{\nabla u}S_{m}a \|_{L^{1}_{t}(B^{\frac{3}{2}}_{2,1})} \eqdefa \sum_{q \in \Z} 2^{\frac{3q}{2}}\, \|  \Delta_{q} T_{\nabla u}S_{m}a \|_{L^{1}_{t}(L^2)} \leqslant C\, 2^{2m}\, \|  u\|_{L^{1}_{t}(\dot{H}^2)} \,\, \| a \|_{L^{\infty}_{t}(L^2)}.
\end{equation}
\noindent The estimate on the term $\ds{\sum_{q \in \Z} 2^{\frac{3q}{2}}\, \| \Delta_{q}R(S_{m}a, \, \nabla u)\|_{L^{1}_{t}(L^2)}}$ is close to the previous one, by vertue of Theorem page $2.85$ in \cite{BCDbis}. We recall it below.\\

\textit{Remind: If $s_{1}$ and $s_{2}$ are two real numbers, such that $s_{1} + s_{2} >0$, then $$ \|R(u,v) \|_{B^{s_{1} + s_{2}}_{p,r}} \leqslant C(s_{1},s_{2}) \, \| u\|_{B^{s_{1}}_{p_{1},r_{1}}} \, \| v\|_{B^{s_{2}}_{p_{2},r_{2}}}, \quad \hbox{} \quad \frac{1}{p} \eqdefa \frac{1}{p_{1}} + \frac{1}{p_{2}} \quad \hbox{and} \quad \frac{1}{r} \eqdefa \frac{1}{r_{1}} + \frac{1}{r_{2}}\cdotp $$}
\noindent Therefore, we have 
\begin{equation}
\label{estimatebis3}
\sum_{q \in \Z} 2^{\frac{3q}{2}}\, \|  \Delta_{q}R(S_{m}a, \, \nabla u)\|_{L^{1}_{t}(L^2)} \eqdefa \| R(S_{m}a, \, \nabla u)\|_{L^{1}_{t}(B^{\frac{3}{2}}_{2,1})} \leqslant C\, 2^{2m}\, \|  u\|_{L^{1}_{t}(\dot{H}^2)} \,\, \| a \|_{L^{\infty}_{t}(L^2)}. 
\end{equation}

\noindent Concerning the last term,  $\ds{\sum_{q \in \Z} 2^{\frac{3q}{2}}\, \|T^{'}_{\Delta_{q}\nabla u} S_{m}a\|_{L^{1}_{t}(L^2)}}$, we write the definition. Indeed, 
$$  T^{'}_{\Delta_{q}\nabla u} S_{m}a\, \eqdefa\, \sum_{q' \geqslant q-2} S_{q'+2}\Delta_{q}\nabla u \, \Delta_{q'}S_{m}a.$$
Therefore, we get 
\begin{equation}
\begin{split}
\|  T^{'}_{\Delta_{q}\nabla u} S_{m}a\|_{L^2} &\leqslant C\, \sum_{q' \geqslant q-2} \| \Delta_{q}\nabla u  \|_{L^{\infty}} \, \| \Delta_{q'}S_{m}a  \|_{L^2}\\ 
2^{\frac{3q}{2}}\, \|  T^{'}_{\Delta_{q}\nabla u} S_{m}a\|_{L^2} &\leqslant\,C\, 2^{\frac{3q}{2}}\, \sum_{q' \geqslant q-2} 2^{\frac{q}{2}}\, 2^{-\frac{q}{2}}\, \| \Delta_{q}\nabla u  \|_{L^{\infty}} \, 2^{-2q'}\, 2^{2q'}\, \| \Delta_{q'}S_{m}a  \|_{L^2}\\
&\leqslant\,C\, \sum_{q' \geqslant q-2} 2^{2(q-q')}\, 2^{-\frac{q}{2}}\, \| \Delta_{q}\nabla u  \|_{L^{\infty}} \, 2^{2q'}\, \| \Delta_{q'}S_{m}a  \|_{L^2}\\
\end{split}
\end{equation} 
\noindent By definition of the Besov norm, there exists a sequence $c_{q'}$ belonging to $\ell^{2}(\Z)$ such that $$ 2^{2q'}\, \| \Delta_{q'}S_{m}a  \|_{L^2} \leqslant C\, c_{q'}\, \| S_{m}a \|_{B^{2}_{2,2}}.$$
\noindent As a result, by summation on $q$, we infer that 
\begin{equation}
\begin{split}
\sum_{q \in \Z} 2^{\frac{3q}{2}}\,\|  T^{'}_{\Delta_{q}\nabla u} S_{m}a\|_{L^2} &\leqslant\,C\, \Bigl(\sum_{q \in \Z} 2^{-\frac{q}{2}}\, \| \Delta_{q}\nabla u  \|_{L^{\infty}} \, d_{q}\,\Bigr) \, \| S_{m}a \|_{B^{2}_{2,2}},  
\end{split}
\end{equation}
\noindent where the sequence $d_{q}$ stems from convolution product: $\ds{d_{q} \eqdefa \sum_{q' \geqslant q-2} 2^{2(q-q')}\, c_{q'}  }$. As defined, it is clear that, by vertue of Young's inequality, $\ds{\| d_{q}\|_{\ell^{2}(\Z)} \leqslant C}$. 
\noindent Finally, Cauchy-Schwarz inequality yields
\begin{equation}
\begin{split}
\sum_{q \in \Z} 2^{\frac{3q}{2}}\, \|  T^{'}_{\Delta_{q}\nabla u} S_{m}a\|_{L^2} &\leqslant\,C\,  \| \nabla u  \|_{B^{-\frac{1}{2}}_{\infty,2}} \, \, \| S_{m}a \|_{B^{2}_{2,2}},  
\end{split}
\end{equation}
\noindent Once again, the Bernstein's embedding $\ds{ B^{1}_{2,2} \hookrightarrow B^{-\frac{1}{2}}_{\infty,2} }$, combining with an integration in time gives
\begin{equation}
\sum_{q \in \Z} 2^{\frac{3q}{2}}\, \|  T^{'}_{\Delta_{q}\nabla u} S_{m}a\|_{L^{1}_{t}(L^2)} \leqslant\,C\, \| S_{m}a \|_{L^{\infty}_{t}(B^{2}_{2,2})}\, \| \nabla u  \|_{L^{1}_{t}(B^{1}_{2,2})}  
\end{equation}
\noindent Therefore, 
\begin{equation}
\label{estimate4bis}
\sum_{q \in \Z} 2^{\frac{3q}{2}}\, \| T^{'}_{\Delta_{q}\nabla u} S_{m}a\|_{L^{1}_{t}(L^2)} \leqslant\,C\, 2^{2m} \| a \|_{L^{\infty}_{t}(L^{2})}\, \|  u  \|_{L^{1}_{t}(\dot{H}^2)}  
\end{equation}
\vskip 0.5 cm
\noindent\textbf{Conclusion} Summing estimates (\ref{estimatebis1}), (\ref{estimatebis2}), (\ref{estimatebis3}), and (\ref{estimate4bis}) completes the proof of the Lemma.
\end{proof}

\end{document}